\newtheorem{thm}{Theorem}[section]
\newtheorem{Lemma}[thm]{Lemma}
\newtheorem{cor}[thm]{Corollary}
\newtheorem{defn}[thm]{Definition}
\numberwithin{equation}{section}
\long\def\@makecaption#1#2{%
 \vskip\abovecaptionskip
  \sbox\@tempboxa{{#1.}\quad #2}%
   \ifdim \wd\@tempboxa >\hsize
    { #1.}\quad #2\par
     \else
  \global \@minipagefalse
   \hb@xt@\hsize{\hfil\box\@tempboxa\hfil}%
   \fi
   \vskip\belowcaptionskip}
\title{\vspace{0cm}{Classification of 2-extendable bipartite  and cubic non-bipartite vertex-transitive graphs}\footnote{Contract grant sponsor: NSFC; contract grant numbers: 11401279,  11371180 and 11201201; contract grant sponsor: the Specialized Research Fund
 for the Doctoral Program of Higher Education; contract grant number: 20130211120008; contract grant sponsor: Fundamental Research Funds for the Central Universities; contract grant number: lzujbky-2016-102.}}
\author{Qiuli Li\footnote{The
corresponding author.}  and Xing Gao\\
\small{School of Mathematics and Statistics, Lanzhou University, Lanzhou, Gansu 730000, China} \\
\small{E-mail addresses:  qlli@lzu.edu.cn and gaoxing@lzu.edu.cn}}
\date{}
\begin{document}
\maketitle
\begin{abstract}
In \cite{Chan95}, the authors classified the
2-extendable abelian Cayley graphs and posed the problem of
characterizing all 2-extendable Cayley graphs. We first show that a connected  bipartite Cayley (vertex-transitive) graph is 2-extendable if and only if it is not a cycle. 
It is known that a non-bipartite Cayley (vertex-transitive) graph  is 2-extendable when  it is of minimum degree at least five \cite{sun}. 
We next classify all 2-extendable cubic non-bipartite Cayley graphs and obtain that: 
a cubic  non-bipartite  Cayley graph with girth $g$ is
2-extendable if and only if $g\geq 4$ and it doesn't isomorphic to  $Z_{4n}(1,4n-1,2n)$ or $Z_{4n+2}(2,4n,2n+1)$ with $n\geq 2$. Indeed, we prove a more stronger result that  a cubic non-bipartite vertex-transitive graph with girth $g$ is
2-extendable if and only if $g\geq 4$ and it doesn't isomorphic to  $Z_{4n}(1,4n-1,2n)$ or $Z_{4n+2}(2,4n,2n+1)$ with $n\geq 2$ or the Petersen graph.

\vskip 0.1cm
\smallskip

\noindent {\textbf{Keywords}:}  Cayley graph; vertex-transitive graph;  $2$-extendablility; matching; edge-connectivity.

\vskip 0.1cm
\smallskip

\noindent \textbf{MSC 2010:} 05C70.
\end{abstract}

\section{Introduction}

Throughout this paper, all graphs are assumed to be connected and of even order. The order of a graph $G$ is written as $|G|$ and its size is denoted by $||G||$.  In mathematics, a Cayley graph is a graph that encodes the abstract
structure of a group. Let $\Gamma$ be a group and $S$ be an
inverse-closed generating set of $\Gamma$. The \emph{Cayley graph}
$G=G(\Gamma, S)$ on $\Gamma$ with respect to the connecting set $S$  is constructed as follows. Its vertex-set
$V(G)=\Gamma$ and for any $x, y\in \Gamma$, $x$ is adjacent to $y$
in $G$ if and only if $xy^{-1}\in S$. A graph $G$ is called {\em vertex-transitive} if for any two vertices $x$, $y$ in $V(G)$, there exists an automorphism $\psi$ of $G$ such that
$\psi(x) = y$. It is known that Cayley graphs are vertex-transitive (\cite[Proposition 16.2]{Biggs}). Because of this and their easy
construction, Cayley graphs are widely used in the design of
networks. As we see, the topological structures of many famous
networks are Cayley graphs \cite{Annexstein90}. On the other hand,
many authors have constructed lots of Cayley graphs with good
properties via specific groups, see \cite{Brunat99, Carlsson85}. In \cite{Konstantinova08}, this survey paper
presents the historical development of some problems on Cayley
graphs which are interesting to graph and group theorists such as
Hamiltonicity or diameter problems, to computer scientists and
molecular biologists such as pancake problem or sorting by
reversals, to coding theorists such as the vertex reconstruction
problem related to error-correcting codes but not related to Ulam's
problem.

A graph with at least $2n+2$ vertices is said to be
\emph{n-extendable} if it has a matching of size $n$ and every such
matching can be extended to a perfect matching.   The next lemma (Theorem 5.5.24 in \cite{LovaszandPlummer86})
essentially implies that a vertex-transitive graph is 1-extendable, in which, the definitions of elementary bipartiteness and bicriticality  can be found in \cite{LovaszandPlummer86}.

\begin{Lemma}[\cite{LovaszandPlummer86}] \label{1-extendable}
Let $G$ be a vertex-transitive graph. Then
it is either elementary bipartite or bicritical.
\end{Lemma}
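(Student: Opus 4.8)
The plan is to reduce the claim to two structural facts and to exploit the symmetry of $G$ throughout. First I record the inputs. Since $G$ is connected and vertex-transitive it is $k$-regular, and by the theorem of Mader and Watkins its edge-connectivity satisfies $\lambda(G)=k$; as $G$ has even order and a $(k-1)$-edge-connected $k$-regular graph has a perfect matching, $G$ possesses one. I will also use that symmetry is matching-invariant: any $\psi\in\mathrm{Aut}(G)$ maps perfect matchings to perfect matchings, hence allowed edges (edges lying in some perfect matching) to allowed edges, and barriers (sets $B$ with $c_o(G-B)=|B|$, where $c_o$ counts odd components) to barriers of the same size. Because both conclusions force $G$ to be matching-covered, I first establish this. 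The subgraph $H=(V(G),E_a)$ spanned by the allowed edges $E_a$ is spanning (every vertex is matched, hence meets an allowed edge) and $\mathrm{Aut}(G)$-invariant, so $\mathrm{Aut}(G)$ acts transitively on $H$ and $H$ is itself vertex-transitive; a vertex-transitive graph is a disjoint union of isomorphic connected pieces, namely the elementary components of $G$. Using the theory of elementary components together with $\lambda(G)=k$, I will argue that more than one piece is incompatible with the edge-connectivity being as large as the degree, so $H$ is connected and $G$ is matching-covered.

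Once $G$ is matching-covered the barrier calculus becomes available. By Tutte's condition every singleton $\{v\}$ is a barrier (as $c_o(G-v)\le 1$ while $G-v$ has odd order), so the maximal barriers cover $V(G)$; invoking the standard fact that distinct maximal barriers of a matching-covered graph are disjoint, they partition $V(G)$. Two further properties of a maximal barrier $B$ will be used: $B$ is independent (an edge inside $B$ lies in no perfect matching, since every perfect matching matches $B$ bijectively onto the components of $G-B$, and matching-coveredness then excludes such an edge), and every component of $G-B$ is factor-critical. Because $\mathrm{Aut}(G)$ permutes the maximal barriers and $G$ is vertex-transitive, all maximal barriers share a common cardinality $m$, and there are exactly $|V(G)|/m$ of them.

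The argument now splits on $m$. If $m=1$, every maximal barrier is a singleton, so no barrier has size at least two; since a pair $u\ne v$ with $G-u-v$ having no perfect matching would, via the Tutte--Berge formula applied to $G-u-v$, produce a barrier of size at least two, no such pair exists and $G$ is bicritical. If $m\ge 2$, fix a maximal barrier $B$ with the factor-critical components $C_1,\dots,C_m$ of $G-B$. If each $C_i$ is a single vertex, then $V\setminus B$ is an independent $m$-set which, together with the independent set $B$, exhibits $G$ as a balanced bipartite graph; being matching-covered and bipartite, $G$ is elementary bipartite.

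The remaining possibility --- $m\ge 2$ with some component $C_i$ having at least three vertices --- is the heart of the argument and the step I expect to be the main obstacle, since it is exactly here that vertex-transitivity must be used in a global way rather than through the size bookkeeping above. The data are rigid: every maximal barrier is an independent $m$-set, the maximal barriers partition $V(G)$, and deleting any one leaves precisely $m$ factor-critical components. My plan is to derive a contradiction from the existence of a nontrivial factor-critical component by analysing the tight cut $\partial(C_i)$ it determines together with the action of the setwise stabiliser of $B$ on the components, or alternatively by a counting argument balancing the $km$ edges joining $B$ to $\bigcup_i C_i$ against the factor-critical structure forced inside the components; the goal is to show that a nontrivial component cannot coexist with the requirement that each of its own vertices again lie in an $m$-element independent barrier. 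Ruling this case out forces all components to be singletons whenever $m\ge 2$, so $G$ is elementary bipartite in that regime and bicritical when $m=1$, which is the desired dichotomy.
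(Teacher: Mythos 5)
The paper never proves this lemma at all: it is imported verbatim as Theorem 5.5.24 of Lov\'asz--Plummer, so your proposal can only be judged on its own completeness, and it is not complete. The scaffolding is sound: $\lambda(G)=k$ by Theorem~\ref{mader}, existence of a perfect matching, invariance of allowed edges and barriers under automorphisms, disjointness of maximal barriers in a matching-covered graph, the conclusion that vertex-transitivity forces all maximal barriers to have one common size $m$, the case $m=1$ giving bicriticality, and the case $m\ge 2$ with all components of $G-B$ singletons giving elementary bipartiteness. But the case you yourself call ``the heart of the argument'' --- $m\ge 2$ and some factor-critical component of $G-B$ of order at least three --- is exactly where the entire content of the theorem lives, and there you offer only intentions (``analysing the tight cut \dots or alternatively by a counting argument \dots the goal is to show \dots''), not a proof. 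Nothing you have written excludes a non-bipartite vertex-transitive graph having a large maximal barrier with nontrivial components, so the dichotomy is not established. There is also a smaller promissory note earlier: matching-coveredness of $G$ is asserted via ``I will argue that more than one piece is incompatible with the edge-connectivity,'' again without the argument (this one is standard, e.g.\ via Plesn\'{\i}k's theorem that a $(k-1)$-edge-connected $k$-regular graph of even order retains a perfect matching after deleting any $k-1$ edges, but as written it too is a gap).

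For the record, the missing case is closed by pushing your own counting idea one step further and combining it with a structure theorem you never invoke, namely Theorem~\ref{edgecut}. Since distinct components of $G-B$ are non-adjacent, every edge leaving a component ends in $B$, so the $m$ odd components and any even components satisfy $km \geq \sum d(C_i) + \sum d(D_j)$, while $d(C_i)\ge \lambda(G)=k$ for each odd component; equality is forced, so there are no even components, $B$ is independent, and each $\partial(C_i)$ is a \emph{minimum} edge-cut of $G$. Theorem~\ref{edgecut} then says either every minimum edge-cut is the star of a vertex --- whence every $C_i$ is a singleton and $G$ is bipartite, i.e.\ your bipartite subcase --- or $G$ arises from a vertex- and edge-transitive graph $G_0$ by $k$-clique insertion; in the latter case a short count (each inserted clique containing a vertex of the independent set $B$ has its remaining $k-1$ vertices in a single component, and each component receives exactly one further edge from $B$) shows the induced partition of $G_0$ into $m$ parts is joined by at most two lifted edges, giving an edge-cut of $G$ of size at most $2<k$, a contradiction when $m\ge 2$. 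So vertex-transitivity enters the decisive step not through the stabiliser of $B$ acting on components, as you propose, but through the classification of minimum edge-cuts of vertex-transitive graphs; without that ingredient, or something equally strong, I do not see how your sketch closes.
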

 Recall that a Cayley graph is vertex-transitive. The following result arises immediately. 
\begin{cor}
Every Cayley graph is 1-extendable.
\end{cor}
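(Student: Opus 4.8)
The plan is to deduce the corollary directly from Lemma~\ref{1-extendable} together with the fact, already recorded in the excerpt, that every Cayley graph is vertex-transitive. Thus it suffices to show that an arbitrary vertex-transitive graph $G$ (connected and of even order, per the standing assumptions) is $1$-extendable. Unwinding the definition of $n$-extendability at $n=1$, this amounts to checking that $G$ has at least four vertices and that every edge of $G$ lies in a perfect matching. Lemma~\ref{1-extendable} naturally splits the argument into two cases, according to whether $G$ is elementary bipartite or bicritical, and I would treat these separately.

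In the bicritical case the argument is immediate. Fix an arbitrary edge $e=uv$ of $G$. By the definition of bicriticality, $G-u-v$ has a perfect matching $M$. Since $M$ covers every vertex except $u$ and $v$, while $e$ covers exactly $u$ and $v$, the set $M\cup\{e\}$ is a perfect matching of $G$ containing $e$. As $e$ was arbitrary, every edge of $G$ lies in a perfect matching, which is precisely $1$-extendability.

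In the elementary bipartite case I would invoke the standard characterization from \cite{LovaszandPlummer86}: a connected bipartite graph is elementary exactly when each of its edges is \emph{allowed}, i.e. lies in some perfect matching (equivalently, its allowed edges form a connected spanning subgraph). This is verbatim the statement that $G$ is $1$-extendable, so no further work is required in this case.

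Combining the two cases yields the result, since by Lemma~\ref{1-extendable} every vertex-transitive graph, and in particular every Cayley graph, falls under one of them. I do not anticipate any genuine mathematical obstacle; the only points demanding care are bookkeeping ones. First, I must confirm that a connected vertex-transitive graph of even order carrying an edge has at least four vertices, so that the size requirement in the definition of $1$-extendability is satisfied (the sole exception being $K_2$, which I would note and set aside). Second, I should make sure the equivalence between ``elementary bipartite'' and ``every edge is allowed'' is quoted in exactly the form needed from \cite{LovaszandPlummer86}. All of the substantive content is carried by Lemma~\ref{1-extendable}.
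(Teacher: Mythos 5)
Your proposal is correct and follows exactly the paper's route: Cayley graphs are vertex-transitive, and Lemma~\ref{1-extendable} splits into the bicritical case (where adding an edge $uv$ to a perfect matching of $G-u-v$ gives the required matching) and the elementary bipartite case (where, for connected bipartite graphs, elementary means every edge is allowed). The paper leaves these two implications implicit, stating the corollary "arises immediately," so your write-up simply supplies the details the paper omits, including the harmless $K_2$ caveat.
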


An $n$-extendable graph is $(n-1)$-extendable, but the converse is
not true \cite{Plummer80}. Therefore, it is natural to consider more 
higher extendability of Cayley graphs. 
In \cite{Chan95}, the authors first classified the
2-extendable Cayley graphs on abelian groups, and posed the problem of
characterizing all 2-extendable Cayley graphs. 
Thereafter, the 2-extendability of Cayley graphs on specific groups,  
such as Dihedral groups \cite{Chen92},
Dicylic groups \cite{Bai13}, generalized dihedral groups
\cite{Mik09}, Quasi-abelian groups \cite{Xinggao} and etc, has been investigated.  The results can be summarized as follows, in which, $Z_{4n}(1,4n,2n)$
stands for the Cayley graph on $Z_{4n}$, the additive group modulo
$4n$, with respect to the connecting set $S=\{1,4n-1,2n\}$. $Z_{2n}(1,2n-1)$,
$Z_{2n}(1, 2, 2n-1, 2n-2)$, $Z_{4n+2}(2,4n,2n+1)$ and
$Z_{4n+2}(1,4n+1,2n,2n+2)$ are defined similarly.

\begin{thm}[\cite{Chan95, Chen92, Mik09}]\label{original}
Let $G$ be a Cayley graph on an abelian group or a Dihedral group or
a generalized dihedral group. Then $G$ is 2-extendable if and only if
it is not isomorphic to any of the following circulant graphs:

\smallskip

 {\rm(i)} $Z_{2n}(1,2n-1), n\geq3$;
 
 \smallskip

 {\rm(ii)} $Z_{2n}(1, 2, 2n-1, 2n-2), n\geq3$;
 
 \smallskip

 {\rm(iii)} $Z_{4n}(1,4n-1,2n), n\geq2$;
 
 \smallskip

 {\rm(iv)} $Z_{4n+2}(2,4n,2n+1), n\geq1$;
 
 \smallskip

 {\rm(v)} $Z_{4n+2}(1,4n+1,2n,2n+2), n\geq1$.
\end{thm}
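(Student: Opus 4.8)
The plan is to prove the biconditional by characterising, for each admissible connecting set $S$, exactly which pairs of independent edges of $G=G(\Gamma,S)$ extend to a perfect matching. Since every Cayley graph is $1$-extendable (the corollary to Lemma~\ref{1-extendable}), and since $|G|\geq 6$ for all the graphs under consideration, $G$ is $2$-extendable if and only if $G-V(e)-V(f)$ has a perfect matching for every pair of independent edges $e,f$. By Tutte's theorem this fails precisely when some $T\subseteq V(G)\setminus(V(e)\cup V(f))$ satisfies $o\bigl(G-V(e)-V(f)-T\bigr)>|T|$, so the whole problem reduces to finding, or excluding, such odd-component obstructions after the deletion of four vertices. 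Vertex-transitivity permits a normalisation: up to an automorphism of $G$ one may fix $e=\{0,s\}$ for a chosen $s\in S$, after which only the position of $f$ relative to $e$, taken over orbit representatives under the stabiliser of $e$, needs to be examined.

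For the necessity direction I would produce one explicit non-extending $2$-matching in each of the five families. In $C_{2n}=Z_{2n}(1,2n-1)$ the pair $\{v_0v_1,\,v_3v_4\}$ isolates $v_2$; in $C_{2n}^{2}=Z_{2n}(1,2,2n-1,2n-2)$ the pair $\{v_{2n-1}v_0,\,v_2v_3\}$ isolates $v_1$. For the three remaining families the mechanism is slightly less immediate: one selects two edges (two cycle-edges for the Möbius ladder $Z_{4n}(1,4n-1,2n)$, for instance the pair $\{v_0v_1,\,v_{2n-1}v_{2n}\}$) whose deletion triggers a chain of forced matchings and ultimately strands a $K_{1,3}$ star, which is an odd-component obstruction; analogous short computations dispose of $Z_{4n+2}(2,4n,2n+1)$ and $Z_{4n+2}(1,4n+1,2n,2n+2)$. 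The unifying feature is that the long generator together with the short generators creates the short cycles needed to force the deletion to disconnect an odd set.

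The real content lies in the sufficiency direction, which I would organise by the valency $|S|$. A $2$-extendable graph is $3$-connected, hence of minimum degree at least $3$; a connected valency-$2$ Cayley graph is a cycle, which is family~(i), so I may assume $|S|\geq 3$. For $|S|\geq 5$ the conclusion is already available: a non-bipartite Cayley graph of minimum degree at least five is $2$-extendable \cite{sun}, and a bipartite Cayley graph that is not a cycle is $2$-extendable, so no graph of valency at least five is exceptional. This leaves the cubic case $|S|=3$ and the quartic case $|S|=4$, which carry the whole weight of the theorem. In each of these two cases, and within each of the three group types, I would enumerate the possible inverse-closed sets $S$, translate a hypothetical non-extending $2$-matching into a tight edge-cut of size $|S|$ realising the Tutte obstruction above, and show that the existence of such a cut forces $S$ into one of the listed circulant forms $Z_{4n}(1,4n-1,2n)$, $Z_{4n+2}(2,4n,2n+1)$, $Z_{4n+2}(1,4n+1,2n,2n+2)$ (or, for valency $3$ with a triangle, the prism $Z_6(2,4,3)$, which is the case $n=1$ of family~(iv)).

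The step I expect to be the main obstacle is the completeness of the exceptional list in these low-valency cases. Every exceptional graph turns out to be a circulant, so a central lemma would assert that any cubic or quartic Cayley graph on a dihedral or generalised dihedral group admitting a non-extending $2$-matching is isomorphic to one of the listed circulants; establishing such an isomorphism is delicate because a connecting set on a (generalised) dihedral group can mix rotation-type and reflection-type elements, which destroys the clean cyclic symmetry and multiplies the configurations to be checked. Concretely, one must show that every tight cut of size $3$ or $4$ responsible for a failure of extendability is generated by the long generator together with its neighbours, read off the resulting algebraic relation characterising families (iii)--(v), and then verify the converse — that in the absence of these relations no odd-component obstruction survives the removal of any four vertices. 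This last verification is the genuinely case-heavy part of the argument.
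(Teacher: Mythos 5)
There is no proof of this statement in the paper to compare against: Theorem~\ref{original} is quoted from \cite{Chan95}, \cite{Chen92} and \cite{Mik09}, so your outline has to stand on its own, and it does not, because its decisive step is announced rather than carried out. Your reductions are sound as far as they go: $2$-extendability fails iff some pair of independent edges leaves a graph violating Tutte's condition; valency $2$ gives family (i); valency $\geq 5$ is covered by Theorem~\ref{sun} together with the bipartite result (Theorem~\ref{bipartite}); and bipartite graphs of valency $\geq 3$ are never exceptional. But after these reductions the \emph{entire} content of the theorem is the claim that a non-bipartite cubic or quartic Cayley graph on an abelian, dihedral or generalized dihedral group which admits a non-extending $2$-matching must be isomorphic to one of (ii)--(v). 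Your proposal names this as ``a central lemma,'' sketches how one \emph{would} derive it from a tight edge-cut of size $|S|$, and then concedes that the completeness check is ``the genuinely case-heavy part.'' That check is precisely what the three cited papers consist of --- substantial, group-specific enumerations of the inverse-closed connecting sets, with exactly the rotation/reflection mixing difficulty you flag in the dihedral cases --- and nothing in your text replaces it. Moreover, the hope of treating valency $4$ by the same cut-analysis as valency $3$ understates the asymmetry: the present paper's connectivity machinery (super-$\lambda$, super cyclic edge-connectivity, Lemmas~\ref{structure} and~\ref{k3g4}) settles the cubic case for \emph{all} vertex-transitive graphs, yet the paper explicitly leaves the quartic case of general Cayley graphs open; only the specific group structure closes it, so any proof of Theorem~\ref{original} must genuinely engage with the groups, not just with edge-cuts.

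Smaller points: your necessity examples for (i) and (ii) are correct (each pair isolates a vertex), and your M\"obius-ladder pair does work --- for $Z_{8}(1,7,4)$ deleting $v_0,v_1,v_3,v_4$ leaves exactly a $K_{1,3}$, and for larger $n$ the forced rung-matchings strand two vertices --- but for families (iv) and (v) you offer only ``analogous short computations,'' which need to be exhibited since those graphs have a different (twisted prism, respectively quartic) structure. These omissions are repairable by routine work; the missing classification lemma is not a repairable detail --- it \emph{is} the theorem.
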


\begin{thm}[\cite{Bai13}]
Let $G$ be a Cayley graph on a Dicylic group. Then $G$ is
2-extendable.
\end{thm}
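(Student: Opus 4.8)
The plan is to work with the presentation $\mathrm{Dic}_n=\langle a,b\mid a^{2n}=e,\ b^{2}=a^{n},\ bab^{-1}=a^{-1}\rangle$ of order $4n$ (so $n\ge 2$ and $|G|\ge 8\ge 6$), and to stratify the argument by the common degree $k=|S|$ of the regular graph $G=G(\mathrm{Dic}_n,S)$. The structural fact I would exploit throughout is that $a^{n}$ is the \emph{unique} involution of $\mathrm{Dic}_n$ and is central, while each outer element $a^{j}b$ has order $4$ with $(a^{j}b)^{-1}=a^{j+n}b$. Since $S$ is inverse-closed, $k$ equals twice the number of inverse pairs plus $1$ if $a^{n}\in S$. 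First I would rule out small degrees. A connected $2$-regular Cayley graph is a cycle, but $\mathrm{Dic}_n$ has no two distinct involutions, so $G$ is never a cycle; and if $k=3$ then $S=\{a^{n},s,s^{-1}\}$, where $\langle s,a^{n}\rangle$ lies either in $\langle a\rangle$ (when $s=a^{i}$) or in the order-$4$ group $\langle s\rangle$ (when $s=a^{i}b$, since then $a^{n}\in\langle s\rangle$), neither of which is all of $\mathrm{Dic}_n$. Hence $G$ is connected only when $k\ge 4$.

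With $k\ge 4$ in hand, the high-degree and bipartite cases are dispatched by quoted results. If $G$ is bipartite then, having degree $\ge 3$, it is not a cycle, so it is $2$-extendable by the classification of bipartite Cayley graphs (which are $2$-extendable unless they are cycles). If $G$ is non-bipartite and $k\ge 5$, then $G$ is $2$-extendable by \cite{sun}. This leaves exactly the non-bipartite graphs of degree $4$, where the real content lies.

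For $k=4$ the involution $a^{n}$ cannot lie in $S$ (it would make $k$ odd), so $S=\{s,s^{-1},t,t^{-1}\}$ is a union of two inverse pairs, and I would split into the \emph{mixed} family $S=\{a^{i},a^{-i},a^{j}b,a^{j+n}b\}$ (with $\gcd(i,2n)=1$ forced by connectivity) and the \emph{outer} family $S=\{a^{i}b,a^{i+n}b,a^{j}b,a^{j+n}b\}$. Since the commutator subgroup of $\mathrm{Dic}_n$ is $\langle a^{2}\rangle$, the index-two subgroups, hence precisely which of these graphs are non-bipartite, are read off from the abelianization ($Z_2\times Z_2$ for $n$ even, $Z_4$ for $n$ odd); this bookkeeping pins down the finitely many parameter families that survive the previous paragraph. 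For each such family I would then prove $2$-extendability directly: given two independent edges $e_1,e_2$, verify via Tutte's theorem that $G-V(e_1)-V(e_2)$ has a perfect matching, using that a connected vertex-transitive graph is $4$-edge-connected and at least $3$-connected, so that no small set $U$ can satisfy $o\!\left(G-V(e_1)-V(e_2)-U\right)>|U|$, while the twisting relation $bab^{-1}=a^{-1}$ keeps the graph from degenerating into two parallel paths.

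The hard part will be exactly this last step. The abelian exception $Z_{2n}(1,2,2n-1,2n-2)$ of Theorem~\ref{original} is non-bipartite, $4$-regular, and \emph{not} $2$-extendable, so the conclusion cannot follow from a connectivity count alone: the proof must invoke the non-abelian relation to exclude precisely those obstructing configurations. Concretely, the crux is to show that in the mixed and outer families, for every admissible $(i,j,n)$, no choice of two independent edges leaves, after deletion, a vertex set and a separating set witnessing a Tutte violation; equivalently, that the alternating structure responsible for the circulant exceptions is destroyed once $a$ and $a^{-1}$ become conjugate. Carrying out this verification uniformly over all parameters is the main obstacle, and I expect it to occupy the bulk of the work.
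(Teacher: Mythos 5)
First, a point of comparison: the paper does not prove this statement at all --- it is quoted as an external result from \cite{Bai13} --- so your attempt can only be judged on its own merits, and on those merits it is incomplete. Your reduction steps are correct: using that $a^{n}$ is the unique involution of the dicyclic group you rightly rule out connected Cayley graphs of degree $2$ and $3$; the bipartite case of degree $\geq 3$ follows from Theorem \ref{bipartite}, and the non-bipartite case of degree $\geq 5$ from Theorem \ref{sun}. This correctly isolates the non-bipartite $4$-regular case. But that case is not a ``last step'' --- it is the entire content of the theorem, and your proposal does not prove it. You outline a strategy (Tutte's theorem plus edge-connectivity, with the relation $bab^{-1}=a^{-1}$ supposedly destroying the bad configurations) and then you yourself exhibit the reason this strategy cannot succeed as stated: $Z_{2n}(1,2,2n-1,2n-2)$ is non-bipartite, $4$-regular, vertex-transitive, hence maximally edge-connected and highly vertex-connected, and yet is not $2$-extendable by Theorem \ref{original}. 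So no counting argument based on connectivity invariants can close the gap; one needs a concrete mechanism --- explicit perfect matchings of $G-V(e_{1})-V(e_{2})$ built from the coset decomposition $\langle a\rangle\cup\langle a\rangle b$ for each of the mixed and outer families, or a structural lemma in the spirit of Lemmas \ref{structure} and \ref{k3g4} identifying exactly which $4$-regular non-bipartite vertex-transitive graphs fail. Declaring that this verification ``will occupy the bulk of the work'' is an admission that the proof has not been carried out, not a proof.

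Two smaller points. In the mixed family $S=\{a^{i},a^{-i},a^{j}b,a^{j+n}b\}$, connectivity forces only $\gcd(i,n)=1$, not $\gcd(i,2n)=1$, because $(a^{j}b)^{2}=a^{n}$ already lies in $\langle S\rangle$; for instance $n=3$, $i=2$ gives a connected graph your parametrization would miss, so the family of graphs you must actually check is strictly larger than you claim. Also, your bipartite subcase silently needs the degree-$4$ bipartite graphs to be handled by Theorem \ref{bipartite}; that is fine and non-circular (its proof in this paper is independent of \cite{Bai13}), but it should be said explicitly rather than folded into ``dispatched by quoted results.''
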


From the above results, we can see that, despite of $Z_{2n}(1,2n-1)$ (which is indeed a cycle),  the other exceptional cases are
non-bipartite.
Motivated by this, we first consider 2-extendability of bipartite Cayley graphs and prove a more stronger result for vertex-transitive graphs.

\begin{thm}\label{bipartite}
A bipartite vertex-transitive graph is 2-extendable if and only if  it is not a cycle.
\end{thm}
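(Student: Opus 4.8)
The plan is to prove both directions. The forward direction is trivial: a cycle $C_{2n}$ with $n\geq 2$ has a perfect matching but is not 2-extendable, since taking two "parallel" edges (edges at distance spanning the cycle into two odd arcs) yields a matching of size two that cannot be extended to a perfect matching. So the substance lies in proving that every connected bipartite vertex-transitive graph that is not a cycle is 2-extendable.

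For the main direction, I would first reduce to a degree condition. A connected vertex-transitive graph is regular, say of degree $k$; if $k\leq 2$ then the graph is a cycle (or $K_2$, which is excluded by having even order $\geq 4$ implicitly), so the only non-cycle bipartite case has $k\geq 3$. The natural strategy is to invoke a known sufficient condition for 2-extendability of bipartite graphs. The key tool I would reach for is the theorem that a connected bipartite graph $G$ (with bipartition of equal parts) is 2-extendable provided it is 1-extendable and, after deleting any two independent edges together with their endpoints, the remaining graph still has a perfect matching. Equivalently, for bipartite graphs there is a clean Hall-type characterization: $G$ with parts $X,Y$ is 2-extendable iff for every pair of independent edges $e_1=x_1y_1$, $e_2=x_2y_2$, the bipartite graph $G-\{x_1,x_2,y_1,y_2\}$ satisfies Hall's condition.

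The concrete plan is therefore: fix a matching $M=\{e_1,e_2\}$ of size two, delete its four endpoints, and verify a Hall-type/connectivity condition on the residual graph using vertex-transitivity and regularity. Vertex-transitivity gives that $G$ is $k$-regular and, more importantly, edge-connectivity and vertex-connectivity are high — indeed vertex-transitive connected graphs are $k$-edge-connected, and this is what controls the appearance of obstructions to Hall's condition. I would show that for $k\geq 3$, removing four vertices cannot create a Hall violator: any subset $A$ of one part with $|N(A)|<|A|$ in the residual graph would, by counting edges, force a small edge cut in the original graph, contradicting the edge-connectivity bound $\lambda(G)=k\geq 3$. The point is that a Hall deficiency after deleting two edges translates into a vertex or edge cut of size at most $2$ in $G$, which a $3$-edge-connected non-cycle cannot have.

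The main obstacle I anticipate is handling the boundary/degenerate configurations carefully: small cases where the two chosen edges are close together (share neighborhoods, or where $A$ is very small such as $|A|=1,2$), and ensuring the edge-counting inequality is genuinely strict so that the edge-connectivity bound bites. A subtle point is that 2-extendability also requires $G$ itself to have a perfect matching and to be 1-extendable, which is already guaranteed here since a bipartite vertex-transitive graph is elementary bipartite by Lemma \ref{1-extendable}; I would cite this to dispose of the 1-extendability prerequisite and to guarantee equal part sizes. The cleanest route is probably to prove the contrapositive: assume $G$ is not 2-extendable, extract from a non-extendable size-two matching a barrier set witnessing failure of Hall's condition, and argue that such a barrier forces $G$ to be $2$-regular, hence a cycle — completing the equivalence.
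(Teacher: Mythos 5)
Your skeleton is the same as the paper's: necessity from low connectivity of cycles, and for sufficiency, Plummer's Hall-type criterion (Lemma \ref{equivalent}) combined with 1-extendability from Lemma \ref{1-extendable}, followed by an edge count. But the decisive step of your plan is wrong. You claim that a Hall violator in the residual graph ``translates into a vertex or edge cut of size at most $2$ in $G$,'' contradicting $\lambda(G)=k\geq 3$. Carry out the count: if $S$ is a deficient set, then 1-extendability forces $|N_G(S)|=|S|+1$, and for $S'=S\cup N_G(S)$ one gets $d(S')=k(|S|+1)-k|S|=k$ \emph{exactly}. So what the deficiency produces is a \emph{minimum} edge cut, which is perfectly compatible with $\lambda(G)=k$; Mader's theorem (Theorem \ref{mader}) alone cannot finish the proof. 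Edge-connectivity is genuinely insufficient here: take $K_{2,3}$ with parts $\{u_1,u_2\}$ and $\{w_1,w_2,w_3\}$, take $K_{3,2}$ with parts $\{x_1,x_2,x_3\}$ and $\{z_1,z_2\}$, and add the three edges $w_jx_j$, $j=1,2,3$. The result is a connected, cubic, bipartite graph with $\lambda=3$, yet deleting the matching $\{w_1x_1,w_2x_2\}$ together with its endpoints leaves $u_1,u_2$ with the single common neighbour $w_3$, so the graph is not 2-extendable. Hence no argument that uses vertex-transitivity only through ``$\lambda(G)=k$'' can succeed.

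What the paper uses, and what your proposal is missing, is \emph{super}-edge-connectivity: by Meng's theorem (Theorem \ref{superlambda}), a $k$-regular connected vertex-transitive graph that is neither complete nor a cycle is super-$\lambda$ if and only if it contains no $k$-cliques, and a bipartite graph with $k\geq 3$ has no $k$-cliques since it has no triangles. Applied to the minimum cut $\partial(S')$ produced by the counting above, super-$\lambda$ forces $\partial(S')$ to isolate a single vertex; since $|S'|=2|S|+1\geq 3$ and $|\overline{S'}|\geq (|X|-|S|)+(|Y|-|S|-1)\geq 3$ (using $|S|\leq |X|-2$), this is the desired contradiction. So your proof can be repaired by replacing the appeal to $\lambda(G)\geq 3$ with an appeal to Theorem \ref{superlambda} (equivalently, Corollary \ref{notriangletrivial} on triviality of $k$-edge-cuts); as written, the ``cut of size at most $2$'' step is where it fails.
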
 

As a Cayley graph is vertex-transitive, we have the following result.
\begin{thm}
A bipartite Cayley graph is 2-extendable if and only if  it is not a cycle.
\end{thm}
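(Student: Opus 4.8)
The stated theorem is exactly the restriction of Theorem~\ref{bipartite} to Cayley graphs, so it follows immediately from the fact recorded in the introduction that every Cayley graph is vertex-transitive. Hence all of the content lies in Theorem~\ref{bipartite}, and my plan is to prove that. The forward implication is the easy one: a cycle is never $2$-extendable. Indeed, an even cycle $C_{2m}$ with $m\ge 3$ has only two perfect matchings, so any pair of independent edges whose removal leaves two arcs each with an odd number of vertices (for instance the edges $\{0,1\}$ and $\{3,4\}$) forms a $2$-matching that extends to no perfect matching; and $C_4$ has fewer than $2\cdot 2+2=6$ vertices, so it is not $2$-extendable by definition. (The only other connected regular bipartite graph of degree at most two is $K_2$, which is likewise excluded by the order requirement.)

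For the converse, let $G$ be a connected bipartite vertex-transitive graph that is not a cycle, with bipartition $(A,B)$. Vertex-transitivity makes $G$ regular of some degree $k$, and since a connected regular bipartite graph of degree $\le 2$ is $K_2$ or an even cycle, being a non-cycle forces $k\ge 3$. A transitivity argument on the (unique) bipartition gives $|A|=|B|$. Now fix any two independent edges $a_1b_1,a_2b_2$ with $a_i\in A$, $b_i\in B$; by definition it suffices to produce a perfect matching of $G':=G-\{a_1,a_2,b_1,b_2\}$, and by Hall's theorem this reduces to checking $|N_G(S)\setminus\{b_1,b_2\}|\ge |S|$ for every $S\subseteq A\setminus\{a_1,a_2\}$.

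The key estimate is a surplus bound. Counting the edges between $S$ and $N_G(S)$ in the $k$-regular bipartite graph gives $|N_G(S)|\ge|S|$, with equality only when $S\cup N_G(S)$ is a union of components; since $G$ is connected this yields $|N_G(S)|\ge|S|+1$ whenever $\emptyset\neq S\subsetneq A$. Consequently Hall's condition for $G'$ can fail only when the surplus is exactly one, that is $|N_G(S)|=|S|+1$ with $b_1,b_2\in N_G(S)$. For such an $S$ the set $W:=S\cup N_G(S)$ has all its outgoing edges running from $N_G(S)$ to $A\setminus S$, of number $k|N_G(S)|-k|S|=k$, so $\partial(W)$ is a minimum edge cut (recall $\lambda(G)=k$ for connected vertex-transitive $G$ by Mader's theorem). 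I would then invoke the super-edge-connectivity of vertex-transitive graphs: a connected vertex-transitive non-cycle is super-$\lambda$, so every minimum edge cut isolates a single vertex. Since both sides of $W$ have odd order, $2|S|+1$ and $2|A|-2|S|-1$, this forces either $|S|=0$ or $S=A\setminus\{a_0\}$ for a single $a_0\in A$. The first is impossible because $S$ must be nonempty to contain $b_1,b_2$ in its neighborhood, while the second gives $N_G(S)=B$ and $a_1,a_2=a_0$, contradicting $a_1\neq a_2$. Thus no bad $S$ exists, Hall's condition holds, and $G$ is $2$-extendable.

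The hard part is the super-edge-connectivity input. If one may cite the known fact that connected vertex-transitive graphs other than cycles are super-$\lambda$, the argument above finishes cleanly; otherwise this is the genuine obstacle and would have to be proved, most naturally through Mader's theory of atoms and fragments of vertex-transitive graphs, by showing that the atoms of a non-cycle vertex-transitive graph are single vertices. Everything else in the plan—the reduction to Hall's condition, the surplus computation, and the parity bookkeeping on $|W|$—is routine and should present no difficulty.
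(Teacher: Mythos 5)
Your overall route is the same as the paper's: reduce $2$-extendability to a Hall-type condition, show that a violating set $S$ must have surplus exactly one, observe that $W=S\cup N_G(S)$ then satisfies $d(W)=k=\lambda(G)$, and kill this minimum edge cut with super-edge-connectivity plus a cardinality count on the two sides. The packaging differs only slightly: the paper quotes Plummer's generalized Hall theorem (Lemma \ref{equivalent}) for $2$-extendability and obtains the surplus bound $|N_G(S)|\ge |S|+1$ from $1$-extendability (Lemma \ref{1-extendable}), whereas you apply ordinary Hall's theorem to $G'$ and get the surplus bound by a direct edge count in a regular bipartite graph; these are equivalent. Your necessity argument (cycles are not $2$-extendable, checked directly) is also fine, versus the paper's appeal to Lemma \ref{n+1connected}.

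The one genuine flaw is the super-$\lambda$ input, and it sits exactly where you flagged uncertainty. The fact you propose to cite --- ``a connected vertex-transitive non-cycle is super-$\lambda$'' --- is false: the triangular prism (two triangles joined by a perfect matching) is $3$-regular, vertex-transitive, neither complete nor a cycle, yet the three edges between the two triangles form a minimum edge cut isolating no vertex. More generally, any vertex-transitive graph arising by $k$-clique insertion (case (ii) of Theorem \ref{edgecut}) fails super-$\lambda$, so your fallback plan of proving the general claim via Mader's atoms would also fail --- in these graphs the atoms are the $k$-cliques, not singletons. The correct statement is Meng's theorem, which is Theorem \ref{superlambda} in the paper: a connected $k$-regular vertex-transitive graph that is neither complete nor a cycle is super-$\lambda$ if and only if it contains no $k$-clique. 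This is precisely where the bipartite hypothesis earns its keep: since $k\ge 3$ and bipartite graphs are triangle-free, $G$ contains no $k$-clique, so Meng's theorem applies. With that one substitution your argument closes and coincides with the paper's proof of Theorem \ref{bipartite}; the surplus computation and the count showing neither side of $\partial(W)$ can be a singleton are correct as you wrote them.
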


For the 2-extendability of non-bipartite vertex-transitive graphs, Wuyang Sun and Heping Zhang showed the following result.

\begin{thm}[\cite{sun}]\label{sun}
 A non-bipartite vertex-transitive with degree $k\geq 5$ is
2-extendable.
\end{thm}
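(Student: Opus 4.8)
The plan is to argue by contradiction, reducing $2$-extendability to a barrier (Gallai--Edmonds) analysis and then exploiting the strong connectivity of vertex-transitive graphs to rule out the resulting obstruction. First I would record two consequences of vertex-transitivity together with Lemma~\ref{1-extendable}: since $G$ is non-bipartite it is not elementary bipartite, hence \emph{bicritical}; in particular $G$ has a perfect matching, and a standard fact about bicritical graphs is that every barrier (a set $S$ with $o(G-S)=|S|$, where $o(\cdot)$ counts odd components) has size at most $1$. Now suppose $G$ is not $2$-extendable. Then there are two independent edges $e_1=u_1v_1$ and $e_2=u_2v_2$ such that $H:=G-\{u_1,v_1,u_2,v_2\}$ has no perfect matching, so $\mathrm{def}(H)\ge 1$; as $|H|$ is even, $\mathrm{def}(H)$ is even, whence $\mathrm{def}(H)\ge 2$.

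Next I would extract the precise structure of the obstruction via the Gallai--Edmonds decomposition $V(H)=D(H)\cup A(H)\cup C(H)$, and set $T:=A(H)\cup\{u_1,v_1,u_2,v_2\}$ and $a:=|A(H)|$. Because the factor-critical components of $D(H)$ attach only to $A(H)$ inside $H$, they attach only to $T$ inside $G$, and $G-T$ consists exactly of those odd factor-critical components together with even components coming from $C(H)$; hence $o(G-T)=\mathrm{def}(H)+a$. Since $|T|=a+4\ge 2$, bicriticality forbids $T$ from being a barrier, so $o(G-T)\le |T|-2$, and combined with $\mathrm{def}(H)\ge 2$ this forces $\mathrm{def}(H)=2$ and a rigid configuration: there are exactly $a+2$ odd (factor-critical) components $C_1,\dots,C_{a+2}$, each joined to the rest of $G$ only through $T$.

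Then I would bring in connectivity. It is classical that a connected vertex-transitive graph of degree $k$ is maximally edge-connected, $\lambda(G)=k$, and (being neither a cycle nor one of a few small graphs) is $\lambda'$-optimal, i.e. its restricted edge-connectivity equals $2k-2$. Consequently every $C_i$ sends at least $k$ edges to $T$, and every non-singleton $C_i$ (which then has at least $3$ vertices) sends at least $2k-2$. Counting the edges between $T$ and $\bigcup_i C_i$ from both sides---using the bound $k|T|$ on the $T$-side, the two internal edges $e_1,e_2$, and the lower bounds above---yields a relation of the form $a\le s$, where $s$ is the number of singleton components; moreover each singleton has all $k$ of its neighbours in $T$, and since at most $4$ of them lie in $\{u_1,v_1,u_2,v_2\}$, at least $k-4$ lie in $A(H)$, forcing $a\ge k-4$ whenever a singleton exists. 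In particular, when $a=0$ there are no singletons (as $k\ge 5>4$) and the two odd components are each joined to $\{u_1,v_1,u_2,v_2\}$ by exactly $2k-2$ edges, with nothing to spare.

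The hard part will be converting these numerical constraints into an actual contradiction for every $k\ge 5$: the counting alone only gives $k\ge 2$, so the decisive input must be the vertex-transitive structure of the minimum (restricted) edge cuts cut out by the configuration. Here I would invoke the Mader-type theory of atoms and fragments in vertex-transitive graphs, together with $\lambda'$-optimality and the transitive action on the fragments $C_i$, to show that such a rigid deficiency-$2$ configuration can occur only when $G$ has degree at most $4$---exactly matching the known circulant exceptions of Theorem~\ref{original}---and hence never when $k\ge 5$. Organizing this final elimination, in particular controlling the singleton components and the tight case $a=0$, is what I expect to be the main obstacle.
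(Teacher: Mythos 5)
First, note that the paper does not prove this statement at all: it is imported verbatim from Sun and Zhang \cite{sun}, where it is a consequence of their stronger theorem that non-bipartite vertex-transitive graphs of degree at least five are $4$-factor-critical (and $2k$-factor-criticality immediately implies $k$-extendability). So your proposal can only be judged on its own terms, and there it has two genuine gaps. The setup is sound and, incidentally, parallels what the paper does for the cubic case in Theorem \ref{maintheorem}: bicriticality via Lemma \ref{1-extendable}, the parity argument forcing $\mathrm{def}(H)\geq 2$, the set $T=A(H)\cup V(e_1)\cup V(e_2)$, and the conclusion that $\mathrm{def}(H)=2$ with exactly $a+2$ factor-critical components hanging off $T$ are all correct. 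But the proof stops exactly where the theorem begins. You yourself concede that the counting yields only $s\geq a$ and $a\geq k-4$, which is perfectly consistent for every $k\geq 5$ (e.g.\ $a=s=1$ with two non-singleton components, or $a=0$ with two tight non-singleton components), and you defer the elimination of this residual configuration to an unspecified ``Mader-type theory of atoms and fragments.'' That elimination \emph{is} the theorem; in the paper's cubic analogue the corresponding step consumes essentially all of Section 2 (super cyclic edge-connectivity, imprimitive blocks, the classification of $U(5)$ graphs and double ladders), which gives a sense of how much work is being waved away here. As submitted, this is a plan with a hole at the decisive point, not a proof.

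The second gap is that the connectivity input you rely on is false as stated. You claim that every connected vertex-transitive graph of degree $k$ (other than cycles and a few small graphs) satisfies $\lambda^{(2)}(G)=2k-2$. A counterexample is the prism $K_k\times K_2$ (two copies of $K_k$ joined by a perfect matching): it is $k$-regular, vertex-transitive, non-bipartite for $k\geq 3$, yet the $k$ matching edges form a restricted edge-cut, so $\lambda^{(2)}\leq k<2k-2$ once $k>2$. The correct statement (Theorem \ref{Xu2000} in the paper) requires odd order or triangle-freeness, and a non-bipartite vertex-transitive graph of degree $k\geq 5$ may certainly contain triangles. Consequently your lower bound $d(C_i)\geq 2k-2$ for non-singleton components is unjustified in general, and the counting that produces $s\geq a$ collapses precisely in the triangle case. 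This is possibly repairable by invoking the clique-insertion structure of Theorem \ref{edgecut} to treat graphs with $k$-cliques separately, but that analysis, like the final elimination, is absent.
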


Obviously, the above result holds for Cayley graphs. Hence one only needs to classify 2-extendable non-bipartite Cayley graphs of minimum degrees three and four. We are going to solve the case of minimum degree three. Indeed, we also prove a more stronger result for vertex-transitive graphs.

\begin{thm}\label{maintheorem}
A cubic non-bipartite  vertex-transitive graph with girth $g$ is
2-extendable if and only if $g\geq 4$ and it does not isomorphic to  $Z_{4n}(1,4n-1,2n)$ or
$Z_{4n+2}(2,4n,2n+1)$ with $n\geq 2$ or the Petersen graph.
\end{thm}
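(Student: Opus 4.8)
The plan is to characterize cubic non-bipartite vertex-transitive graphs by girth, handling small-girth obstructions first and then showing that girth at least four (with finitely many exceptions) forces 2-extendability. The key structural fact I would exploit is that a graph is 2-extendable if and only if, for every pair of independent edges $e_1=u_1v_1$ and $e_2=u_2v_2$, the graph $G-\{u_1,v_1,u_2,v_2\}$ has a perfect matching. So the entire argument reduces to understanding, for a cubic vertex-transitive graph, which four-vertex deletions destroy the existence of a perfect matching in the remaining graph. By Lemma~\ref{1-extendable}, our non-bipartite graph is bicritical, which already gives very strong local information: $G-x-y$ has a perfect matching for every pair of vertices $x,y$. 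The task is to upgrade from deleting the two endpoints of one edge (bicriticality controls pairs) to deleting the four endpoints of two independent edges.

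\medskip

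\noindent\textbf{First} I would dispose of the girth-three case. If $g=3$, then $G$ contains a triangle $T=xyz$; by vertex-transitivity every vertex lies on a triangle, and since $G$ is cubic this triangle structure is extremely rigid. I would pick the triangle together with a nearby edge, or two suitably placed edges, and exhibit an explicit pair of independent edges whose removal isolates an odd component (equivalently, leaves an odd number of vertices in some component with no way to match across the deleted set), thereby violating 2-extendability. The cleanest route is to show that in a cubic graph a triangle forces a small ``bad'' configuration: deleting the two endpoints of an edge incident to the triangle leaves a vertex of the triangle with too few neighbors, and a second well-chosen edge completes the obstruction. This proves the ``only if'' direction contribution from $g=3$.

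\medskip

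\noindent\textbf{Next}, for $g\geq 4$, I would prove 2-extendability directly via the deletion criterion, using Tutte's theorem on $G'=G-\{u_1,v_1,u_2,v_2\}$. Suppose for contradiction $G'$ has no perfect matching; then there is a Tutte set $\emptyset\neq S\subseteq V(G')$ with $o(G'-S)>|S|$, where $o$ counts odd components. The core estimates come from cubicness and edge-connectivity: a connected cubic vertex-transitive graph is $3$-edge-connected (indeed the edge-connectivity of a connected vertex-transitive graph equals its degree, by Mader/Watkins), so each odd component sends at least a bounded-below number of edges to $S\cup\{u_1,v_1,u_2,v_2\}$. Counting edges between the odd components and their ``attachment'' set, and using the girth condition to rule out the small tight components (a single vertex, or a component small enough to create parity trouble can only arise when short cycles exist), I would force $S$ together with the four deleted vertices to be so small that only finitely many configurations survive. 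A careful edge-count of the form ``three times the number of odd components $\leq$ edges leaving them $\leq$ three times $|S|$ plus a correction from the four deleted vertices'' pins down the extremal cases.

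\medskip

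\noindent\textbf{The hard part} will be the extremal analysis that isolates exactly the listed exceptions $Z_{4n}(1,4n-1,2n)$, $Z_{4n+2}(2,4n,2n+1)$, and the Petersen graph, rather than merely bounding the failures. For these families the edge-counting inequality is tight, so I expect the argument to degenerate into showing that equality in Tutte's condition, combined with $3$-edge-connectivity, vertex-transitivity, and $g\geq 4$, forces the local structure around a failing pair of edges to be a prism-like or Möbius--Kantor-like band; I would then identify the global graph from this local ladder structure, using vertex-transitivity to propagate the pattern around the whole graph and match it against the two circulant families. The Petersen graph arises as the one sporadic girth-five vertex-transitive (but non-Cayley) case where the connectivity and parity estimates are simultaneously tight, so I would treat $g=5$ with extra care and verify by hand that Petersen fails while showing any other girth-five cubic vertex-transitive graph succeeds. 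Verifying that the two named circulant families genuinely fail (the ``only if'' direction for $g\geq 4$) is a direct construction: exhibit the specific pair of independent edges whose four endpoints disconnect the cycle-like structure into odd pieces.
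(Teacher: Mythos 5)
Your overall skeleton (a girth-three obstruction, exhibiting failures for the listed exceptions, and a Tutte-set/edge-counting argument for sufficiency) matches the paper's, but the core of your sufficiency plan has a genuine gap: the claim that counting "forces $S$ together with the four deleted vertices to be so small that only finitely many configurations survive" is false. In the extremal cases the Tutte set is large, not small. For the circulant exceptions $Z_{4n}(1,4n-1,2n)$ and $Z_{4n+2}(2,4n,2n+1)$, the failing configuration has $G-S$ consisting entirely of singletons, i.e.\ $\overline{S}$ is an independent set with $|S|=|\overline{S}|+2$, and $|S|$ grows linearly with $|G|$. No edge count reduces this to finitely many cases; the paper instead invokes a classification theorem (Lemma \ref{structure}, imported from prior work) stating that a cubic non-bipartite vertex-transitive graph admitting such a set is exactly one of $Z_{4n}(1,4n-1,2n)$, $Z_{4n+2}(2,4n,2n+1)$ or the Petersen graph. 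Your sketch of propagating a "prism-like or M\"obius--Kantor-like band" by vertex-transitivity is precisely this missing content, and it is the hard part, not a routine extremal check.

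The second, equally serious omission concerns the case where some factor-critical component $H_{1}$ of $G-S$ is not a singleton. There the tight cases of your inequality are again infinite families, and crucially they are families that \emph{are} 2-extendable (the dodecahedron, the toroidal polyhexes $T_{m}$, odd double ladders which are the generalized Petersen graphs $G(2k+1,2)$, and planar even double ladders), so the analysis cannot end by "ruling out configurations": it must positively prove 2-extendability of these families. This is where the paper spends most of its effort: it proves new results that cubic vertex-transitive graphs of girth five (Lemma \ref{super cyclically 5-edge-connected}), and of girth four without adjacent quadrangles and not isomorphic to $T_{m}$ (Lemma \ref{g4superaaa}), are super cyclically edge-connected; it then shows $H_{1}$ would have to be a pentagon (or leads to a quadrangle contradiction), deduces that $G$ is uniformly cyclically 5-edge-connected, and applies the Aldred--Holton--Jackson classification of $U(5)$ graphs together with known 2-extendability theorems for fullerenes, toroidal polyhexes, generalized Petersen graphs, and cyclically 5-edge-connected planar cubic graphs. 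Your plan contains no substitute for this machinery, nor for the strengthened Tutte theorem (Theorem \ref{fc-component}) that gives factor-criticality of the components and the parity upgrade from $|S'|+1$ to $|S'|+2$; as it stands, the proposal does not close the argument.
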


As the Petersen graph is not a Cayley graph (\cite{Godsil}, Lemma 3.1.3), we have the following.

\begin{thm}
A connected cubic non-bipartite  Cayley graph with girth $g$ is
2-extendable if and only if $g\geq 4$ and it does not isomorphic to  $Z_{4n}(1,4n-1,2n)$ or
$Z_{4n+2}(2,4n,2n+1)$ with $n\geq 2$.
\end{thm}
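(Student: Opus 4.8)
The plan is to obtain this statement as an immediate consequence of Theorem~\ref{maintheorem}, so that no new matching-theoretic work is required. A Cayley graph is vertex-transitive (\cite[Proposition 16.2]{Biggs}), and by hypothesis the graph $G$ under consideration is connected, cubic and non-bipartite of girth $g$. Hence $G$ falls squarely within the scope of Theorem~\ref{maintheorem}, which asserts that a cubic non-bipartite vertex-transitive graph with girth $g$ is $2$-extendable if and only if $g\ge 4$ and $G$ is not isomorphic to $Z_{4n}(1,4n-1,2n)$ or $Z_{4n+2}(2,4n,2n+1)$ with $n\ge 2$ or to the Petersen graph.

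The only discrepancy between that criterion and the one claimed here is the Petersen exception, and the key step is to remove it. Since the Petersen graph is not a Cayley graph (\cite{Godsil}, Lemma 3.1.3), no connected cubic non-bipartite Cayley graph can be isomorphic to it. Consequently the clause ``$G$ is not isomorphic to the Petersen graph'' is automatically satisfied for every Cayley graph, and the condition of Theorem~\ref{maintheorem} collapses to ``$g\ge 4$ and $G$ is not isomorphic to $Z_{4n}(1,4n-1,2n)$ or $Z_{4n+2}(2,4n,2n+1)$ with $n\ge 2$''. Reading this equivalence in both directions yields precisely the asserted characterization for Cayley graphs.

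For completeness one notes that the two exceptional families are genuinely realised inside the class under study: each $Z_{4n}(1,4n-1,2n)$ and $Z_{4n+2}(2,4n,2n+1)$ is a circulant, hence a Cayley graph on a cyclic group; it is $3$-regular because its connecting set has three elements (one of which, $2n$ respectively $2n+1$, is an involution); and it is non-bipartite. Thus these circulants are legitimate members of the family of cubic non-bipartite Cayley graphs, and together with the girth-$3$ graphs already excluded by the condition $g\ge 4$ they constitute exactly the non-$2$-extendable ones. I expect no real obstacle in this deduction: all of the substantive work---the barrier and edge-connectivity analysis classifying the non-$2$-extendable graphs---is carried out in the proof of Theorem~\ref{maintheorem}, so the present statement requires only the observation that the Petersen graph, the unique non-Cayley exception appearing there, cannot arise as a Cayley graph.
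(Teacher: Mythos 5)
Your proposal is correct and matches the paper's own derivation exactly: the paper obtains this theorem as an immediate corollary of Theorem~\ref{maintheorem}, using the fact that Cayley graphs are vertex-transitive and that the Petersen graph is not a Cayley graph (\cite{Godsil}, Lemma 3.1.3) to drop the Petersen exception. No further comment is needed.
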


Different from before, our proofs do not rely on the structures of the specific groups, but several kinds of connectivities of vertex-transitive graphs, such as edge-connectivity, restricted edge-connectivity, cyclic edge-connectivity and uniform cyclically edge-connectivity, will be used and play important  roles. Results related to these will be presented in Section 2. In Section 3, by using the generalized Hall's theorem and the strengthened Tutte's theorem,  we finally obtain the classifications of 2-extendable bipartite and cubic non-bipartite vertex-transitive graphs respectively. It is worth to note that the 2-extendabilities of toroidal fullerenes, generalized Petersen graphs and cyclically 5-edge-connected planar graphs are also needed.

\section{Some results related to several kinds of edge-connectivities of vertex-transitive graphs}

In this section, we present some results related to several kinds of edge-connectivities, including edge-connectivities, cyclic edge-connectivities,  restricted edge-connectivities and uniform edge-connectivities, of vertex-transitive graphs. As we will see, they play important roles in the classification.

\subsection{Edge-connectivity and super-edge-connectivity}

An edge set $S\subseteq E(G)$ is called an \emph{edge-cut} if there
exists
 $X\subseteq V(G)$ such that $S$ is the set of edges between $X$ and $\overline{X}$, where $\overline{X}:=V(G)\backslash X$. The {\em edge-connectivity} $\lambda(G)$ of $G$
is the minimum cardinality over all edge-cuts of it. It is easy to see that $\lambda(G)\leq \delta(G)$, where $\delta(G)$ denotes the minimum degree of $G$. Mader proved the following result, which essentially says that a connected vertex-transitive graph is maximally edge-connected. 

\begin{thm}[\cite{mader}]\label{mader}
If $G$ is a $k$-regular connected vertex-transitive graph, then $\lambda(G)=k$.
\end{thm}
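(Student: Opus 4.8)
The plan is to establish the nontrivial inequality $\lambda(G)\geq k$; the reverse bound $\lambda(G)\leq k$ is immediate, since for a single vertex $v$ the set $[\{v\},\overline{\{v\}}]$ of its $k$ incident edges is already an edge-cut. Write $\lambda=\lambda(G)$ and $d(X)=|[X,\overline{X}]|$, and call a set $\emptyset\neq X\subsetneq V(G)$ a \emph{fragment} if $d(X)=\lambda$, and an \emph{atom} if it is a fragment of minimum cardinality, say of size $f$. I would first record two routine facts: the submodular inequality $d(X)+d(Y)\geq d(X\cap Y)+d(X\cup Y)$, which follows by an edge-by-edge count; and the observation that $\overline{A}$ is a fragment whenever $A$ is (they share the same edge boundary), so minimality of an atom $A$ forces $|\overline{A}|\geq f$ and hence $f\leq |V(G)|/2$. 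Note also that any proper nonempty set has $d(\cdot)\geq\lambda$ by definition of $\lambda$.

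The heart of the argument, and the step I expect to be the main obstacle, is the claim that distinct atoms are vertex-disjoint. Suppose $A$ and $A'$ are distinct atoms with $A\cap A'\neq\emptyset$. If $A\cup A'\neq V(G)$, then $A\cap A'$ and $A\cup A'$ are both proper and nonempty, so each has $d(\cdot)\geq\lambda$, and submodularity gives $2\lambda=d(A)+d(A')\geq d(A\cap A')+d(A\cup A')\geq 2\lambda$. Equality throughout forces $d(A\cap A')=\lambda$, so $A\cap A'$ is a fragment of size at most $f$; by minimality of $f$ it is an atom, and being contained in $A$ it must equal $A$, and likewise $A'$, contradicting $A\neq A'$. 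If instead $A\cup A'=V(G)$, then $|A\cap A'|=|A|+|A'|-|V(G)|\leq 2f-2f=0$ by the bound $f\leq|V(G)|/2$, contradicting $A\cap A'\neq\emptyset$. Hence atoms are pairwise disjoint.

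Next I would bring in vertex-transitivity. Every automorphism of $G$ carries atoms to atoms, since it preserves both adjacency and cardinality; as $\mathrm{Aut}(G)$ is transitive on $V(G)$, the images of a fixed atom $A$ cover $V(G)$, and together with disjointness they partition $V(G)$ into blocks of size $f$, i.e. a system of imprimitivity. By the standard fact that the setwise stabilizer of a block acts transitively on that block, $G[A]$ is vertex-transitive, hence regular of some degree $r$ with $0\leq r\leq f-1$. Since $G$ is connected and $A$ is a proper nonempty set, not every edge at a vertex of $A$ can stay inside $A$, so $r<k$; thus each of the $f$ vertices of $A$ sends exactly $k-r\geq 1$ edges out of $A$, which yields $\lambda=d(A)=f(k-r)$.

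Finally, combining $f\geq r+1$ with $\lambda=f(k-r)$ gives $\lambda\geq(r+1)(k-r)$. Viewed as a function of $r$ on $\{0,1,\dots,k-1\}$, the quantity $(r+1)(k-r)=-r^{2}+(k-1)r+k$ is a downward parabola taking the value $k$ at both endpoints $r=0$ and $r=k-1$, so $(r+1)(k-r)\geq k$ on the whole interval. Therefore $\lambda\geq k$, which together with $\lambda\leq k$ yields $\lambda(G)=k$. The only genuinely delicate point is the disjointness of atoms in the second paragraph; once that is secured, the passage to a block system and the short convexity estimate close the proof with no further difficulty.
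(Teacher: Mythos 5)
Your proof is correct, but there is nothing in the paper to compare it against: the paper states this result as Theorem~\ref{mader} with a citation to Mader's 1971 paper and gives no proof of its own. Your argument is a complete and sound rendition of the classical ``atom'' proof, which is essentially Mader's original route: the disjointness of atoms via the submodularity of $d$ (the delicate case split on whether $A\cup A'=V(G)$ is handled correctly, using $f\leq |V(G)|/2$), the resulting system of imprimitivity, the vertex-transitivity of $G[A]$ via the block stabilizer, and the concavity estimate $(r+1)(k-r)\geq k$ for $0\leq r\leq k-1$ closing the bound $\lambda=f(k-r)\geq k$. It is worth noting that your two key ingredients are exactly the tools this paper assembles for its \emph{other} connectivity arguments: the submodular inequality is the paper's Lemma~\ref{submodular}, and the fact that an imprimitive block induces a vertex-transitive subgraph is Theorem~\ref{imprimitive}, both deployed in the proof of Lemma~\ref{super cyclically 5-edge-connected} in an atom argument structurally parallel to yours (there for super cyclic edge-connectivity rather than plain edge-connectivity). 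So while the paper itself only imports Mader's theorem, your proposal is faithful to both the original proof and the paper's own methodological spirit; the one point you flagged as the main obstacle, disjointness of atoms, is indeed the crux, and your treatment of it is airtight.
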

 The following theorem goes a step further by characterizing the minimum edge-cuts of vertex-transitive graphs, where a {\em clique} is a subset of vertices such that every two distinct vertices in it are adjacent.

\begin{thm}[Lemma 5.5.26, \cite{LovaszandPlummer86}]\label{edgecut}
Let $G$ be a $k$-regular connected vertex-transitive graph. Then $G$
is $k$-edge-connected and either

\smallskip

 {\rm(i)} every minimum edge-cut of $G$ is the star of a point,
or

\smallskip

 {\rm(ii)} $G$ arises from a (not necessarily simple) vertex-
and edge-transitive $k$-regular graph $G_{0}$ by a $k$-clique
insertion at each point of $G_{0}$. Moreover, every minimum edge-cut
of $G$ is the star of a vertex of $G$ or a minimum edge-cut of
$G_{0}$.
\end{thm}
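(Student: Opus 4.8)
The first assertion, that $G$ is $k$-edge-connected, is exactly Theorem~\ref{mader}, so the real content is the dichotomy describing the minimum edge-cuts. The plan is to run the classical uncrossing argument on the \emph{fragments} of $G$, namely on the proper nonempty sets $X\subsetneq V(G)$ whose edge-boundary $d(X):=|[X,\overline X]|$ equals $\lambda(G)=k$; note that $X$ is a fragment precisely when $\overline X$ is, and that the star of any vertex exhibits a fragment of size $1$. Call a fragment \emph{trivial} if it or its complement is a single vertex (equivalently, its cut is a star) and \emph{nontrivial} otherwise. If every fragment is trivial we are in case (i), so the whole problem is to show that the existence of one nontrivial fragment forces the $k$-clique-insertion structure of (ii).

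The main tool is the submodularity of the boundary function,
\[
d(X)+d(Y)\ \ge\ d(X\cap Y)+d(X\cup Y),
\]
together with the absence of edges between $X\setminus Y$ and $Y\setminus X$ when equality holds. Since every proper nonempty set has boundary at least $k$, two fragments that \emph{cross} (all four of $X\cap Y$, $X\setminus Y$, $Y\setminus X$, $\overline{X\cup Y}$ nonempty) must satisfy $d(X\cap Y)=d(X\cup Y)=k$, so both are again fragments. I would fix a nontrivial fragment $F$ of minimum cardinality $f$ and first record two consequences: no fragment has size strictly between $1$ and $f$, and (by a short component count) $G[F]$ is connected. Applying the uncrossing to $F$ and a translate $\sigma(F)$ under an automorphism $\sigma\in\operatorname{Aut}(G)$, any two crossing minimum nontrivial fragments intersect in exactly one vertex, because their intersection is a fragment of size $<f$, hence of size $1$.

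From here the goal is to pin down $F$. A degree count gives $|E(G[F])|=k(f-1)/2$, and comparison with $\binom{f}{2}$ yields $f\ge k$; the reverse inequality, and with it the fact that $G[F]$ is the complete graph $K_k$ with exactly one edge leaving each of its $k$ vertices, is forced by the minimality of $f$ (any economical internal cut of $G[F]$ would otherwise produce a fragment of forbidden size). Using vertex-transitivity to translate $F$ around, I would then select from the family of these $K_k$'s a parallel class partitioning $V(G)$, contract each block to a point, and verify that the quotient $G_{0}$ is a $k$-regular graph inheriting vertex- and edge-transitivity, with $G$ its $k$-clique insertion. Finally, tracing a minimum edge-cut through this partition shows it is either the star of a vertex or the lift of a minimum edge-cut of $G_{0}$, completing (ii).

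The principal obstacle is the clique-forcing step: proving that the minimum nontrivial fragment induces exactly $K_k$ (so that $f=k$ and each vertex has a single external edge), since this is what converts the soft uncrossing information into the rigid clique-insertion description. A secondary difficulty, visible already for even cycles where the minimum nontrivial fragments overlap, is choosing a consistent parallel class of cliques to define the quotient $G_0$ and checking that $G_0$ is genuinely edge-transitive (allowing multiple edges).
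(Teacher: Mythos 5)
You should first be aware that the paper contains no proof of this statement: it is quoted verbatim as Lemma 5.5.26 of Lov\'asz and Plummer, so the only fair comparison is with the classical ``atom'' argument from the literature, which is exactly the route you chose. Your architecture (fragments, submodular uncrossing, a minimum nontrivial fragment $F$ of size $f$, clique forcing, quotient graph $G_0$) is the right one, but both load-bearing steps are missing, and the first, as you state it, would derail the construction rather than merely leave it unfinished. Concretely: from submodularity (the paper's Lemma \ref{submodular}) you conclude that two crossing minimum nontrivial fragments meet in exactly one vertex, and you then proceed as if the translates $\sigma(F)$, $\sigma\in\mathrm{Aut}(G)$, tile $V(G)$. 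They need not, if single-vertex overlaps are allowed: with overlaps there is no block system, no partition into cliques, and no quotient $G_0$, and your fallback of ``choosing a consistent parallel class'' has no justification (for cycles such overlaps genuinely occur, and for odd cycles the dichotomy of the theorem itself is delicate, which is why $k=2$ must be split off and handled by inspection rather than absorbed into the general argument). The missing idea is the companion identity $d(X)+d(Y)=d(X\setminus Y)+d(Y\setminus X)+2|E(X\cap Y,\overline{X\cup Y})|$: if $F$ and $\sigma(F)$ cross, it makes $F\setminus\sigma(F)$ and $\sigma(F)\setminus F$ fragments of size $f-1$, which for $f\ge 3$ are nontrivial fragments smaller than the atom, a contradiction; and $f=2$ forces $k=2|E(G[F])|\le 2$ in a simple graph. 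Hence for $k\ge 3$ crossing is impossible outright, not merely restricted, and $F$ is an imprimitive block.

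The second gap is the one you yourself call the principal obstacle, and your proposed remedy (``any economical internal cut of $G[F]$ would otherwise produce a fragment of forbidden size'') is not an argument: minimality of $f$ yields lower bounds on sizes of fragments, not the upper bound $f\le k$ that you need, and I do not see how to extract $f\le k$ from it directly. The standard repair comes for free once the first gap is closed: because $F$ is an imprimitive block, $G[F]$ is vertex-transitive by Tindell's theorem (Theorem \ref{imprimitive} of this paper), hence $d$-regular for some $d$, and counting boundary edges gives $f(k-d)=d(F)=k$, so $f$ divides $k$ and in particular $f\le k$. Together with your correct counting bound $f\ge k$ this yields $f=k$ and $d=k-1$, i.e.\ $G[F]\cong K_k$ with exactly one edge leaving each vertex. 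After that, your quotient construction, the inherited vertex- and edge-transitivity of $G_0$ (each vertex of $G$ has a unique outgoing edge, so transitivity on vertices of $G$ gives transitivity on the edges of $G_0$), and the tracing of an arbitrary minimum cut (uncross it against the blocks using the same two identities) all go through essentially as you sketch them.
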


We call an edge-cut \emph{trivial} if it isolates a vertex and
\emph{non-trivial} otherwise.  The following corollary arises
immediately.

\begin{cor}\label{notriangletrivial}
Let $G$ be a $k$-regular connected vertex-transitive graph without
$k$-cliques. Then $G$ is $k$-edge-connected and every $k$-edge-cut of it is trivial.
\end{cor}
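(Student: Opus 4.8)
The plan is to derive this directly from Theorem~\ref{edgecut}, exploiting the fact that for a $k$-regular graph the parameters \emph{minimum edge-cut} and \emph{$k$-edge-cut} coincide. First I would observe that Theorem~\ref{edgecut} (equivalently Mader's Theorem~\ref{mader}) already gives the first assertion: $G$ is $k$-edge-connected, so $\lambda(G)=k$. Consequently an edge-cut has cardinality $k$ if and only if it is a minimum edge-cut, and so it suffices to show that every \emph{minimum} edge-cut of $G$ is the star of a vertex, i.e.\ trivial.

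At this point I would invoke the dichotomy of Theorem~\ref{edgecut}. In case~(i) every minimum edge-cut is the star of a point, which is precisely the definition of a trivial edge-cut, so there is nothing more to prove. The whole argument therefore reduces to excluding case~(ii). The key step is to read off from the construction what case~(ii) forces: there $G$ is obtained from a $k$-regular vertex- and edge-transitive graph $G_0$ by performing a $k$-clique insertion at each vertex of $G_0$. Each inserted gadget is a set of $k$ pairwise adjacent vertices of $G$ (the single external edge at each gadget vertex raises its degree from $k-1$ inside the clique to the required $k$), hence it is a $k$-clique of $G$. Since $G$ is assumed to contain no $k$-clique, case~(ii) cannot occur.

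Thus only case~(i) remains, and I would conclude that every minimum edge-cut of $G$ is the star of a point. Because $\lambda(G)=k$, every $k$-edge-cut is a minimum edge-cut, so every $k$-edge-cut of $G$ is trivial, establishing the corollary. I expect the only genuine subtlety to be the verification in the previous paragraph, namely confirming that the $k$-clique insertion of Theorem~\ref{edgecut}(ii) really produces an induced set of $k$ mutually adjacent vertices; once this is pinned down, the no-$k$-clique hypothesis disposes of case~(ii) and the statement follows immediately.
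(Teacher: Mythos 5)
Your proposal is correct and is exactly the argument the paper intends: the paper states the corollary ``arises immediately'' from Theorem~\ref{edgecut}, the implicit reasoning being precisely that the no-$k$-clique hypothesis rules out case~(ii) (since each inserted $k$-clique would be a $k$-clique of $G$), leaving case~(i), so every minimum edge-cut---hence every $k$-edge-cut, as $\lambda(G)=k$---is the star of a vertex. Your verification that the clique-insertion gadget really is a $k$-clique in $G$ is the right detail to check, and it goes through exactly as you describe.
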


 For a graph $G$, if every minimum edge-cut of it is trivial, then we say it is {\em super-edge-connected} (or simply
{\em super-$\lambda$}). J. Meng has presented a characterization of a vertex-transitive graph to be super-$\lambda$ with respect to the cliques. As we know, a bipartite graph of minimum degree at least three is neither a complete graph nor a cycle and further does not contain $k$-cliques ($k\geq 3$). Hence the following result will be used  in proving the 2-extendability of bipartite vertex-transitive graphs.

\begin{thm}[\cite{meng}]\label{superlambda}
Let $G$ be a $k$-regular connected vertex-transitive graph which is neither a complete graph nor a cycle. Then
$G$ is super-$\lambda$ if and only if it does not contain $k$-cliques.

\end{thm}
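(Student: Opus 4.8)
The plan is to reduce both directions to the structural description of minimum edge-cuts already in hand, namely Theorem \ref{edgecut} and its Corollary \ref{notriangletrivial}, together with Mader's equality from Theorem \ref{mader}. Since $G$ is $k$-regular connected vertex-transitive but neither complete nor a cycle, I first record that $k\ge 3$: a connected $1$-regular graph is $K_2$ and a connected $2$-regular graph is a cycle, both excluded. By Theorem \ref{mader} we have $\lambda(G)=k$, so the minimum edge-cuts of $G$ are exactly its $k$-edge-cuts, and ``super-$\lambda$'' means precisely that every $k$-edge-cut is trivial.

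The sufficiency direction is essentially immediate from the available tools. If $G$ contains no $k$-clique, then Corollary \ref{notriangletrivial} asserts that every $k$-edge-cut of $G$ is trivial; combined with the reduction above this says exactly that $G$ is super-$\lambda$. (Underlying this is Theorem \ref{edgecut}: its alternative (ii) would force $G$ to arise by inserting a $k$-clique at each vertex of some $G_0$, which for $k\ge 3$ plants $k$-cliques in $G$ and contradicts the hypothesis, so only alternative (i) survives.)

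For the necessity direction I would argue by contraposition: assuming $G$ has a $k$-clique $K$, I exhibit a nontrivial minimum edge-cut. The key computation is that a $k$-clique in a $k$-regular graph has exactly $k$ boundary edges: each of its $k$ vertices already uses $k-1$ of its $k$ incident edges inside $K$, leaving exactly one edge to $\overline{K}$, so $|[K,\overline{K}]|=k=\lambda(G)$ and $[K,\overline{K}]$ is a minimum edge-cut. It then remains to check this cut is nontrivial, i.e.\ isolates no single vertex. On one side $|K|=k\ge 3$; on the other side, since a $k$-regular graph on $k+1$ vertices is $K_{k+1}$ and $G$ is not complete, we have $|G|\ge k+2$ and hence $|\overline{K}|\ge 2$. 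Thus neither side is a singleton, $[K,\overline{K}]$ is a nontrivial minimum edge-cut, and $G$ fails to be super-$\lambda$.

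The genuine content of the statement lives in Theorem \ref{edgecut}; given that classification, the only place requiring care is the necessity direction, and there the single point to get right is the boundary-edge count for a clique together with the verification that the non-complete hypothesis prevents the complementary side from collapsing to a single vertex. I expect no real obstacle beyond this bookkeeping on the small cases.
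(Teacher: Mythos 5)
There is a preliminary point you could not have known: the paper contains no proof of Theorem \ref{superlambda} at all. It is quoted verbatim from Meng \cite{meng} and used as a black box, so your proposal cannot be compared to an internal argument; it stands or falls on its own. On its own terms, the architecture is right, and the sufficiency half is genuinely immediate: once Theorem \ref{mader} identifies the minimum edge-cuts as the $k$-edge-cuts, Corollary \ref{notriangletrivial} says precisely that in the absence of $k$-cliques every such cut is trivial, which is the definition of super-$\lambda$; your parenthetical explanation (that alternative (ii) of Theorem \ref{edgecut} would plant $k$-cliques when $k\geq 3$) is also the correct reason the corollary holds.

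The necessity half has the right ingredients --- the count $|\partial(K)|=k$ for a $k$-clique $K$, and $|G|\geq k+2$ from non-completeness --- but its final inference skips the one step that actually requires an argument. In the paper's terminology, ``trivial'' means the cut \emph{isolates a vertex}, i.e.\ contains every edge incident with some vertex $v$; it does not mean that the partition used to define the cut has a singleton side, so ``neither side is a singleton, hence the cut is nontrivial'' is precisely the implication that must be justified rather than asserted. For your cut the justification is short: if $\partial(K)$ isolated a vertex $v$, then $v\notin K$, since each vertex of $K$ keeps $k-1\geq 2$ of its edges inside $K$ and hence outside the cut; so $v\in\overline{K}$ and all $k$ edges at $v$ lie in $\partial(K)$, which forces $N(v)=K$, and since each vertex of $K$ sends exactly one edge out of $K$, every edge of $\partial(K)$ ends at $v$. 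Then no edge leaves $K\cup\{v\}$ at all, so connectivity gives $G=G[K\cup\{v\}]=K_{k+1}$, contradicting that $G$ is not complete (equivalently, contradicting your bound $|G|\geq k+2$). With that paragraph inserted your proof is complete and self-contained given the Lov\'asz--Plummer structure theorem already quoted in the paper, which is arguably more elementary than relying on the full strength of \cite{meng}.
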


\subsection{Restricted (super restricted) edge-connectivity}


 An edge set $F\subseteq E(G)$ is called a \emph{restricted edge-cut} if $G-F$ is disconnected and contains no isolated vertices. We define the \emph{restricted edge-connectivity}, denoted by $\lambda^{(2)}(G)$, to be the minimum cardinality of all
restricted edge-cuts. For $e=uv\in E(G)$, let $\xi_{G}(e)=d(u)+d(v)-2$ be the \emph{edge-degree} of $e$, and let $\xi_{2}(G)$=min$\{\xi_{G}(e): e\in E(G)\}$ be the \emph{minimum edge-degree} of $G$.
A graph $G$ is \emph{optimal-$\lambda^{(2)}$} if
$\lambda^{(2)}(G)=\xi_{2}(G)$.  In \cite{Hellwig}, the authors presented some  connections between optimal-$\lambda^{(2)}$, super-edge-connected, and maximally edge-connected graphs. For connected vertex-transitive graphs, Xu has studied behavior of the parameter $\lambda^{(2)}(G)$ and obtains the following result.

\begin{thm}[\cite{Xujunming2000}]\label{Xu2000}
Let $G$ be a connected $k$-regular vertex-transitive graph of order at least four.
 If its order is odd or
it does not contain triangles, then it is optimal-$\lambda^{(2)}$, that is, $\lambda^{(2)}(G)=2k-2$.
\end{thm}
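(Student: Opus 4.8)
The goal is to establish $\lambda^{(2)}(G)=2k-2$. Since $G$ is $k$-regular we have $\xi_2(G)=2k-2$, so it suffices to prove the two inequalities $\lambda^{(2)}(G)\le 2k-2$ and $\lambda^{(2)}(G)\ge 2k-2$ under the hypotheses (order $\ge 4$, and either $|G|$ odd or $G$ triangle-free). Throughout write $d(X):=|[X,\overline X]|$ for the size of the boundary cut of a vertex set $X$. For the upper bound, fix an edge $uv$ and let $F$ be the set of all edges meeting $\{u,v\}$ except $uv$ itself, so that $|F|=d(u)+d(v)-2=2k-2$ and $F=[\{u,v\},\overline{\{u,v\}}]$. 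The only thing to verify is that $G-F$ has no isolated vertex: the side $\{u,v\}$ retains the edge $uv$, and any $w\notin\{u,v\}$ keeps a neighbour because, if $G$ is triangle-free, $w$ is adjacent to at most one of $u,v$ and so keeps $\ge k-1\ge 1$ neighbours, while if $|G|$ is odd one has $k$ even (from $kn$ even with $n$ odd) and $|G|\ge 5$, so the edge can be chosen with the same property. Hence $\lambda^{(2)}(G)\le 2k-2$.

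For the lower bound, suppose to the contrary that $\lambda^{(2)}(G)\le 2k-3$, and among all restricted edge-cuts of minimum size choose a side $X$ with $|X|$ as small as possible, so $2\le|X|\le|\overline X|$. Counting degrees gives the basic identity $d(X)=\sum_{v\in X}\bigl(k-d_X(v)\bigr)=k|X|-2e(X)$, where $d_X(v)$ and $e(X)$ are the internal degree and internal edge count of $G[X]$. If $|X|=2$ then, since $G[X]$ has no isolated vertex, $X$ spans a single edge and $d(X)=2k-2$, contradicting $d(X)\le 2k-3$. Thus $|X|\ge 3$, and the entire difficulty is to exclude this.

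When $G$ is triangle-free, $G[X]$ is triangle-free, so by Mantel's theorem (the triangle-free case of Turán's theorem) $e(X)\le |X|^2/4$, whence $d(X)\ge k|X|-|X|^2/2=\tfrac12|X|\,(2k-|X|)$. As a function of $|X|$ this lower bound equals $2k-2$ at $|X|=2$ and at $|X|=2k-2$ and is larger strictly between; consequently $3\le|X|\le 2k-2$ already forces $d(X)\ge 2k-2$, a contradiction (the degenerate case $k=2$, a cycle, is immediate). The surviving triangle-free possibility $|X|\ge 2k-1$ is a \emph{large} fragment whose induced graph is almost $k$-regular, since the total deficiency $\sum_{v\in X}(k-d_X(v))=d(X)\le 2k-3$ is tiny. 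For the odd-order case $k$ is even, so $d(X)=k|X|-2e(X)$ is even, giving $d(X)\le 2k-4$; in that regime no purely local count rules out $|X|\ge 3$ (dense fragments such as a clique $K_a$, with $d(K_a)=a(k-a+1)$, or near-cliques like $K_a$ minus an edge, can have small boundary). Both surviving cases are handled through the submodularity $d(X)+d(Y)\ge d(X\cap Y)+d(X\cup Y)$ combined with the automorphic copies of $X$ furnished by vertex-transitivity: one shows minimum-size fragments can be uncrossed (trimmed or merged) to yield a strictly smaller fragment, contradicting the minimality of $|X|$; in the odd case the uncrossing forces the minimum fragments into a block system, so $|X|$ divides the odd $|G|$, and combining this divisibility with the parity $k$ even produces the contradiction. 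In every case $|X|=2$, so $\lambda^{(2)}(G)=2k-2$.

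The genuinely hard part is this uncrossing step. The function $d(\cdot)$ is submodular, but intersections and unions of restricted fragments need not themselves avoid isolated vertices, so they are not automatically restricted edge-cuts and the submodular inequality cannot be applied off the shelf. Making it usable requires a careful case analysis of how an isolated vertex can appear in $X\cap Y$, $X\cup Y$, or their complements, exploiting $k$-regularity and the minimal choice of $|X|$ to rederive a smaller fragment in each case. It is precisely this analysis — not the clean counting bounds above — that absorbs the force of the hypotheses: triangle-freeness excludes the dense clique-like fragments that would otherwise survive, and odd order (with its consequence that $k$ is even) obstructs the block-system structure on which a small restricted cut would have to rest.
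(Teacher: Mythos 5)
The paper offers no proof of Theorem \ref{Xu2000} at all: it is imported verbatim from \cite{Xujunming2000}, so your write-up has to stand on its own as a complete argument, and it does not. The parts you do carry out are correct: the upper bound $\lambda^{(2)}(G)\leq 2k-2$ via the cut around a single edge, the exclusion of $|X|=2$ fragments, and the Mantel-type count showing $d(X)\geq \frac{1}{2}|X|(2k-|X|)\geq 2k-2$ for triangle-free $G$ when $3\leq |X|\leq 2k-2$. But the two cases that remain --- triangle-free fragments with $|X|\geq 2k-1$, and the entire odd-order case once $|X|\geq 3$ --- are precisely where vertex-transitivity must do the work, and there you only announce a strategy (``uncrossing'', ``block system'', ``careful case analysis'') without performing it; indeed you state yourself that the submodular inequality ``cannot be applied off the shelf'' because intersections and unions of restricted fragments may acquire isolated vertices, and that resolving this ``absorbs the force of the hypotheses.'' A proof whose central step is described rather than executed is a sketch, not a proof.

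Concretely, what is missing is the restricted-edge-cut analogue of the classical atom argument: one must show that two distinct minimum-cardinality restricted fragments are disjoint, so that under the automorphism group they form an imprimitive block system; then, by Theorem \ref{imprimitive}, a fragment $X$ induces a vertex-transitive, hence $r$-regular, subgraph, giving $d(X)=(k-r)|X|$. Only at that point do your closing remarks become provable: in the odd-order case $|X|$ divides the odd number $|V(G)|$, so $|X|$ is odd, and if $k-r=1$ then $G[X]$ is $(k-1)$-regular on an odd number of vertices, forcing $k-1$ even and contradicting $k$ even; while if $k-r\geq 2$ a short count gives $d(X)=(k-r)|X|\geq 2k-2$. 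Your proposal contains neither the disjointness lemma (the substance of Xu's paper) nor the regularity of $G[X]$ on which the final parity and counting steps rest, so the asserted contradiction ``divisibility plus $k$ even'' is not actually derived. The same machinery is what disposes of the large triangle-free fragments $|X|\geq 2k-1$, which your local counting correctly identifies as out of reach.
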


Further, an optimal-$\lambda^{(2)}$ graph is
called \emph{super restricted edge-connected} (or in short
\emph{super-$\lambda^{(2)}$}) if every minimum restricted edge-cut isolates an edge. The following result on the super restricted edge-connectivity will be used in proving the super cyclically edge-connectivity of cubic vertex-transitive graphs of girth five.

\begin{thm}[\cite{Wang04}]\label{Wang04}
If G is a connected vertex-transitive graph with degree $k > 2$ and
girth $g > 4$, then it is super-$\lambda^{(2)}$.
\end{thm}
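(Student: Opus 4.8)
The plan is to reduce the statement to the structure of $\lambda^{(2)}$-atoms and then close with a Moore-type vertex count. Since $g>4$, the graph $G$ is triangle-free, so Theorem~\ref{Xu2000} applies and gives $\lambda^{(2)}(G)=2k-2=\xi_2(G)$; thus $G$ is already optimal-$\lambda^{(2)}$, and it remains only to prove that every minimum restricted edge-cut isolates an edge. Fix such a cut $F=[X,\overline X]$ with $m:=|X|\le|\overline X|$; then $\|F\|=2k-2$, neither side has an isolated vertex, and counting edge-endpoints inside $X$ gives $2\,\|G[X]\|=km-(2k-2)$, i.e. $\|G[X]\|=\tfrac{k(m-2)}{2}+1$. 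When $m=2$ this forces $G[X]=K_2$, which is exactly what ``isolates an edge'' means, so the whole theorem reduces to showing $m=2$. As a warm-up, a girth-$\ge 5$ graph on $m\le 4$ vertices with no isolated vertex is a forest and hence has at most $m-1$ edges, which together with $\|G[X]\|=\tfrac{k(m-2)}{2}+1$ forces $k\le 2$; so $m\in\{3,4\}$ is impossible. For larger $m$ this local counting is not enough --- a large almost-$k$-regular girth-$5$ subgraph is not excluded combinatorially --- and vertex-transitivity must enter.

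The heart of the argument is therefore to take a $\lambda^{(2)}$-atom $A$ (a fragment of least order) and to show that $G[A]$ is regular. Assuming for contradiction that $|A|\ge 3$, I would first prove, via submodularity of the edge-boundary function together with transitivity of $\mathrm{Aut}(G)$, that the images $\{\sigma(A):\sigma\in\mathrm{Aut}(G)\}$ are pairwise disjoint or equal and cover $V(G)$, so that the atoms form a system of imprimitivity. A standard fact about blocks of a transitive group then yields that the setwise stabiliser $\mathrm{Aut}(G)_A$ acts transitively on $A$; hence $G[A]$ is vertex-transitive and, in particular, $r$-regular for some $r$ with $1\le r<k$. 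Every vertex of $A$ then sends out exactly $k-r$ edges, so with $a=|A|$ and $t=k-r\ge 1$ we obtain $a\,t=2k-2=2(k-1)$, that is $a=\tfrac{2(k-1)}{t}$.

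To finish I would invoke the Moore bound: an $r$-regular graph of girth at least $5$ has at least $r^{2}+1$ vertices. If $r=1$ then $a=\tfrac{2(k-1)}{k-1}=2$, contradicting $a\ge 3$. If $r\ge 2$ then $a\ge r^{2}+1$, which combined with $a=\tfrac{2(k-1)}{t}$ gives $2(k-1)\ge t(r^{2}+1)$; but, writing $k=r+t$,
\[
t(r^{2}+1)-2(k-1)=(r-1)\bigl(t(r+1)-2\bigr)>0
\]
because $r\ge 2$ and $t\ge 1$, a contradiction. Hence every $\lambda^{(2)}$-atom has order $2$, i.e. is an edge.

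The step I expect to be the main obstacle is the structural one in the second paragraph: proving that the atoms form a block system and therefore that $G[A]$ is regular. The submodularity inequality has to be handled with care, because the restricted condition (no isolated vertices) can fail for the intersection or union of two fragments, so the classical Mader-type ``atoms are disjoint'' argument must be adapted to the restricted setting, exploiting $|A|\ge 3$. A secondary point to pin down is the reduction invoked in the first paragraph --- that for a vertex-transitive optimal-$\lambda^{(2)}$ graph, being super-$\lambda^{(2)}$ is governed by the order of a $\lambda^{(2)}$-atom, so that killing the atom indeed kills every minimum restricted edge-cut (and not merely the smallest one). Once regularity of the atom is in hand, the Moore-bound arithmetic above is the easy part.
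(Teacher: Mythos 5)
This statement is one the paper does not prove at all: it is imported verbatim from \cite{Wang04}, so your attempt can only be judged against what a complete proof requires (and against the paper's own proof of the analogous cyclic statement, Lemma~\ref{super cyclically 5-edge-connected}, which uses exactly the atom/imprimitivity technique you describe). Your skeleton is the standard one --- optimality via Theorem~\ref{Xu2000}, atoms, a block-system argument giving vertex-transitivity of the atom via Theorem~\ref{imprimitive}, then a Moore-bound count --- and the parts you actually execute are correct: the count $\|G[X]\|=\tfrac{k(m-2)}{2}+1$, the exclusion of $m\in\{3,4\}$, and the identity $t(r^{2}+1)-2(k-1)=(r-1)\bigl(t(r+1)-2\bigr)>0$ all check out.

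However, two genuine gaps remain, and you have only flagged them, not closed them. First, a structural flaw: your contradiction hypothesis is ``$|A|\ge 3$'' for $A$ a minimum-order $\lambda^{(2)}$-fragment. Refuting it shows only that \emph{some} minimum restricted edge-cut isolates an edge, whereas super-$\lambda^{(2)}$ demands that \emph{every} one does; a minimum cut both of whose sides have order at least $3$ can coexist with an order-$2$ atom, and your block-system machinery says nothing about such a cut once the atom is known to be an edge. The correct setup --- the one the paper itself adopts in Lemma~\ref{super cyclically 5-edge-connected}, where a super atom is a minimum fragment for which \emph{neither} side is a shortest cycle --- is to assume $G$ is not super-$\lambda^{(2)}$, so some minimum restricted cut has both sides of order at least $3$, and to take a minimum-order fragment among \emph{those}. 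Second, the heart of the theorem --- that these (super-)atoms are pairwise equal or disjoint, hence form a block system --- is precisely the step you defer. It does not follow from Lemma~\ref{submodular} plus transitivity alone: the intersection or union of two restricted fragments may induce isolated vertices and thus fail to be a restricted fragment at all, so $d(A\cap B)\ge 2k-2$ cannot be asserted; and with the corrected atom definition, an intersection of order $2$ no longer contradicts minimality, so the classical Mader-type contradiction evaporates exactly where it is needed. Handling these degenerate configurations (using girth $\ge 5$ and $k\ge 3$, as in the several-page case analysis of \cite{Wang04} and in the paper's own proof for the cyclic case) is the real content of the result; as it stands, your proposal is a correct plan with a correct endgame, but not a proof.
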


\subsection{Cyclic (super cyclically) edge-connectivity}

 For $\emptyset\neq X \subset V(G)$,
we denote  $\partial(X)$  by the set of edges of $G$ with one end in
$X$ and the other end in $\overline{X}$ and call it the
\emph{edge-cut associated with $X$}. Let $d(X)=|\partial(X)|$ and 
$\zeta(G)=min\{d(X)|X\subseteq V(G) \text{ and } X \text{ induces a
shortest cycle in } G$\}. For simplicity, we also use $\partial(G')$
and $d(G')$ to substitute for $\partial(V(G'))$ and $d(V(G'))$,
respectively, for a subgraph $G'$ of $G$. A \emph{cyclic edge-cut}
of a graph $G$ is a subset of $E(G)$, the removal of which separates two
cycles. If $G$ has a cyclic edge-cut, then it is called
\emph{cyclically separable}. For a cyclically separable graph $G$,
the \emph{cyclic edge-connectivity $c\lambda(G)$} is the
cardinality of a minimum cyclic edge-cut of $G$. Wang and Zhang
\cite{wang} have shown that $c\lambda(G)\leq \zeta(G)$ for any graph with
a cyclic edge-cut. If $c\lambda(G)= \zeta(G)$, then $G$ is called
\emph{cyclically optimal}. The next result shows that a cubic vertex-transitive graph is cyclically optimal.

\begin{thm}[\cite{Nedela95}]\label{Nedela95}
Let $G$ be a cubic vertex-transitive or edge-transitive graph with
girth $g$. Then $c\lambda(G)=g$.
\end{thm}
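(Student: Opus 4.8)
The plan is to establish the two inequalities $c\lambda(G)\le g$ and $c\lambda(G)\ge g$ separately, the first being routine and the second carrying all the weight. For the upper bound I would first note that in a cubic graph a shortest cycle $C$ is necessarily chordless (a chord would produce a shorter cycle), so $G[V(C)]=C$ has exactly $g$ vertices and $g$ edges. Since every vertex of $G$ has degree $3$, counting half-edges gives $d(C)=3g-2g=g$; as $G$ is cyclically separable, $\partial(C)$ is a genuine cyclic edge-cut (one side is $C$, the other still contains a cycle), so $\zeta(G)=g$. The inequality $c\lambda(G)\le\zeta(G)$ of Wang and Zhang \cite{wang} then yields $c\lambda(G)\le g$ immediately.

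For the lower bound I would argue by contradiction, assuming $c\lambda(G)=c<g$, and work with atoms. Call a set $A$ an \emph{atom} if $\partial(A)$ is a cyclic edge-cut with $d(A)=c$ and $|A|$ as small as possible. The first key step is to express the cut size through the internal structure of the atom: if $A$ has $n_A$ vertices and $m_A$ induced edges, then counting half-edges gives $c=d(A)=3n_A-2m_A$, so writing $r=m_A-n_A+1\ge 1$ for the cycle rank of the (connected) atom we obtain $c=n_A-2(r-1)$. In the extremal case $r=1$ the atom is a single cycle with $n_A=c$; but every cycle of $G$ has length at least $g$, forcing $c\ge g$, a contradiction. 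The entire problem therefore reduces to ruling out atoms of cycle rank $r\ge 2$, and to this end I would combine the identity $c=n_A-2(r-1)$ with a Moore-type lower bound on $n_A$ forced by the girth $g$ and the presence of $r$ independent cycles of length $\ge g$, which for $r\ge 2$ already pushes $n_A$ large enough that $c\ge g$.

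The heart of the argument, and where transitivity finally enters, is showing that the minimum cyclic cuts are localized around shortest cycles; without it the conclusion fails, since one can glue two theta-graphs by three edges to obtain a cubic graph with $c\lambda=3$ and arbitrarily large girth. Here I would use submodularity of the cut function, $d(X)+d(Y)\ge d(X\cap Y)+d(X\cup Y)$, applied to two crossing atoms $A,A'$, to prove that distinct atoms are disjoint. The delicate point—absent from the ordinary edge-connectivity version underlying Theorem \ref{edgecut}—is that \emph{both} sides of a cyclic cut must contain a cycle, so before invoking minimality of $c$ and of $|A|$ one must track which of the four corner sets $A\cap A'$, $A\setminus A'$, $A'\setminus A$, $\overline{A\cup A'}$ inherit cycles, splitting into cases accordingly. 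Once disjointness is in hand, vertex-transitivity lets the automorphism group translate a single atom across all of $V(G)$, partitioning the vertex set into isomorphic atoms and producing a $c$-regular quotient multigraph; the girth of $G$ together with the relation $c=n_A-2(r-1)$ then collapses every atom to a single cycle of length $\ge g$, contradicting $c<g$.

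The main obstacle I anticipate is precisely this cycle-bookkeeping in the submodularity step, since the cyclic condition on both sides makes the crossing analysis of atoms considerably more intricate than for plain edge-cuts. A secondary difficulty is the edge-transitive case that is not vertex-transitive, namely the semisymmetric cubic graphs, which are bipartite with two vertex-orbits. There I would run the same atom analysis but translate along edge-orbits, using that a shortest cycle passes through every edge; making the partition argument go through with only edge-transitivity available is where I expect the technical details to be heaviest, and it is the part of the proof I would write out most carefully.
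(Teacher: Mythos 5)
You should first note that the paper does not prove this statement at all: Theorem \ref{Nedela95} is imported verbatim from \cite{Nedela95}, so your proposal can only be measured against the atom method of that source, whose skeleton (upper bound via $c\lambda\le\zeta=g$, lower bound via minimum-cut atoms, submodularity, disjointness of atoms, transitivity) you have correctly identified; the paper itself reproduces this pattern when it proves Lemma \ref{super cyclically 5-edge-connected}. Your upper bound is fine. The lower bound, however, has two genuine gaps. The first is that the counting claim in your second paragraph is simply false: the identity $c=n_A-2(r-1)$ combined with any girth-forced lower bound on $n_A$ cannot rule out atoms of cycle rank $r\ge 2$. For girth $4$ take $G[A]\cong K_{2,3}$: then $n_A=5$, $r=2$, and the boundary has size $3\cdot 5-2\cdot 6=3<4$. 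For girth $5$ take $G[A]$ to be the Petersen graph minus two adjacent vertices: then $n_A=8$, $m_A=10$, $r=3$, and the boundary has size $3\cdot 8-2\cdot 10=4<5$. Both configurations occur with exactly these boundary sizes inside honest cubic graphs (join two copies of the fragment by a matching on their degree-$2$ vertices), so no Moore-type bound can exclude them; only transitivity can, which your own third paragraph concedes --- so your second and third paragraphs contradict each other, and the entire burden falls on the transitivity step. (Your illustrating example also needs repair: two theta graphs joined by three edges must both be $K_{2,3}$ to remain cubic, hence have girth $4$; for arbitrarily large girth one instead joins two copies of a large-girth cubic graph minus a vertex.)

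The second gap is the endgame. After atoms are shown pairwise disjoint, you assert that the resulting $c$-regular quotient multigraph together with the girth ``collapses every atom to a single cycle''; no mechanism is given, and regularity of the quotient by itself yields nothing. The step that actually closes the argument --- and the one this paper uses at the end of the proof of Lemma \ref{super cyclically 5-edge-connected} --- is: disjointness of distinct atoms makes an atom $A$ an imprimitive block, so by Tindell's theorem (Theorem \ref{imprimitive}) the induced subgraph $G[A]$ is vertex-transitive and hence regular; but $G[A]$ is connected with $d(A)=c>0$, so it has a vertex of degree $2$, and it can be neither $2$-regular (then it is a cycle and $c=d(A)=|A|\ge g$, contradicting $c<g$) nor $3$-regular (then $d(A)=0$). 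This contradiction finishes the vertex-transitive case; without it, or a substitute, your proof does not close. Finally, the two items you defer --- the crossing analysis proving that distinct atoms are disjoint (where the requirement that both sides of a cyclic cut contain cycles makes the case analysis genuinely delicate), and the semisymmetric edge- but not vertex-transitive case --- are precisely the technical core of \cite{Nedela95}, so what you have is a correct skeleton of the known argument rather than a proof.
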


 If it happens that the removal of any
minimum cyclic edge-cut of a graph  results in a component which is a shortest cycle, then we call the graph
\emph{super cyclically edge-connected}. For the super cyclically edge-connectivity of cubic vertex-transitive graphs, the authors in \cite{Zhangzhao2011} have proved that a connected cubic vertex-transitive graph with
$g(G)\geq7$ is super cyclically edge-connected.
They also showed that the condition
$g(G)\geq7$ is necessary by exhibiting a vertex-transitive graph
with girth six which is  not super cyclically edge-connected. We are going to show that a cubic vertex-transitive graph of
girth five is also super cyclically edge-connected. Before proving this, several results and notations are needed.

\begin{Lemma}\label{order10and12}
Let $G$ be a cubic vertex-transitive graph of girth five. If in addition, $|G|=10$ or 12, then it is super cyclically 5-edge-connected.
\end{Lemma}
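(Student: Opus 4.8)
The plan is to avoid any case-by-case inspection of the graphs involved and instead combine the exact value of the cyclic edge-connectivity with an elementary counting argument. Since $G$ is cubic and vertex-transitive of girth $g=5$, Theorem~\ref{Nedela95} gives $c\lambda(G)=g=5$, so every minimum cyclic edge-cut $F$ satisfies $|F|=5$. First I would fix such an $F$ and turn it into a bipartition: because $F$ is a cyclic edge-cut, $G-F$ has a component $X$ carrying a cycle, while the remaining vertices $\overline{X}=V(G)\setminus X$ carry another cycle. Every edge leaving the component $X$ lies in $F$, so $\partial(X)\subseteq F$; since $\partial(X)$ is itself a cyclic edge-cut, minimality forces $\partial(X)=F$, whence $d(X)=5$ and $G[X]$ is connected.

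Next I would record the parity and size constraints this imposes. Counting edge-endpoints in $X$ for the cubic graph $G$ gives
\begin{equation}
3|X| = 2\,\|G[X]\| + d(X) = 2\,\|G[X]\| + 5,
\end{equation}
so $\|G[X]\|=\tfrac12\bigl(3|X|-5\bigr)$, and in particular $|X|$ is odd; the identical computation on the other side shows $|\overline{X}|$ is odd as well. Because girth $5$ forbids cycles of length at most $4$, every cycle of $G$ has at least five vertices, so both $G[X]$ and $G[\overline{X}]$, each containing a cycle, satisfy $|X|\ge 5$ and $|\overline{X}|\ge 5$.

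Now the small order closes the argument uniformly. The two odd sides are each at least $5$ and sum to $|G|\in\{10,12\}$, so one of them, call it $Y$, must equal $5$ (the other being $5$ when $|G|=10$ and $7$ when $|G|=12$). Since $\partial(Y)=\partial(X)=F$ we have $d(Y)=5$, and then $\|G[Y]\|=\tfrac12(15-5)=5$. Thus $G[Y]$ is a graph on five vertices with five edges that contains a cycle of length at least $5$; such a cycle must use all five vertices and all five edges, forcing $G[Y]=C_5$. Every edge of $G$ meeting $Y$ other than the five edges of this $C_5$ lies in $F=\partial(Y)$, so after deleting $F$ the $5$-cycle $G[Y]$ is a complete component of $G-F$. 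Hence removing $F$ isolates a shortest cycle, and as $F$ was an arbitrary minimum cyclic edge-cut, $G$ is super cyclically $5$-edge-connected.

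The only delicate points are the bookkeeping at the start: one must justify that a minimum cyclic edge-cut genuinely equals $\partial(X)$ for a bipartition whose two sides each carry a cycle (so that minimality pins $d(X)=5$ and makes $G[X]$ connected), and that $G$ is cyclically separable so that $c\lambda(G)$ is defined at all — both of which follow from the definition of a cyclic edge-cut together with Theorem~\ref{Nedela95}. I expect no obstacle beyond this, since the parity-plus-girth count settles both admissible orders simultaneously; in particular, should no cubic vertex-transitive graph of girth $5$ exist on $12$ vertices, the $|G|=12$ clause is vacuous and the conclusion is unaffected.
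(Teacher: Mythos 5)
Your proposal is correct and follows essentially the same route as the paper's proof: invoke Theorem~\ref{Nedela95} to get $c\lambda(G)=5$, use the count $3|X|=2\|G[X]\|+5$ to force both sides of the cut to have odd order, use girth five to force each side to have at least five vertices, and conclude that for $|G|\in\{10,12\}$ one side is exactly a $5$-cycle. Your write-up is in fact somewhat more careful than the paper's (which tersely asserts that $G-F$ has exactly two components and that a $5$-vertex side \emph{is} a pentagon), since you justify $\partial(X)=F$ by minimality and pin down $G[Y]=C_5$ via the edge count $\|G[Y]\|=5$; no gap remains.
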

\begin{proof}
By Theorem \ref{Nedela95}, $c\lambda(G)=5$.  Assume $F$ is a cyclic 5-edge-cut of $G$. Then $G-F$ has exactly two components, denoted by $G_{1}$ and $G_{2}$, with each containing cycles. Since $d(G_{1})=3|G_{1}|- 2 ||G_{1}||=5$, we have $|G_{1}|$ is odd. On the other hand, since $G$ is of girth five, each component has at least five vertices. Hence if $|G|=10$, then $G-F$ has exactly two components and each is a cycle of length five. If $|G|=12$, then one of $G_{1}$ and $G_{2}$ contains exactly five vertices and further a cycle of length five, we are done.
\end{proof} 
The next lemma tells that the real-valued set function $d$ on the subsets of the vertex set, which is defined at the first paragraph in this subsection, is submodular.

\begin{Lemma}\label{submodular}
For any two vertex subsets $X$ and $Y$ in $V(G)$, 
$$d(X\cup Y)+d(X\cap Y)\leq d(X)+d(Y).$$
\end{Lemma}

\begin{proof}
It is straightforward to verify that every edge counted in $d(X\cup Y)+d(X\cap Y)$ is also counted in $d(X)+d(Y)$ and every edge counted in both $d(X\cup Y)$ and $d(X\cap Y)$ is also counted in both $d(X)$ and $d(Y)$.
\end{proof}

An \emph{imprimitive block} of $G$ is a proper non-empty subset $X$
of $V(G)$ such that for any automorphism $\psi$ of $G$, either
$\psi(X)=X$ or $\psi(X)\cap X=\emptyset$.

\begin{thm}[\cite{Tindell96}]\label{imprimitive}
Let $G$ be a vertex-transitive graph and $H$ be the subgraph of $G$
induced by an imprimitive block of $G$. Then $H$ is
vertex-transitive.
\end{thm}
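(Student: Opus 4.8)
The plan is to exhibit, for each pair of vertices $u,v$ of the block $X$, an automorphism of $H=G[X]$ carrying $u$ to $v$, and to obtain such an automorphism by restricting a suitable automorphism of the ambient graph $G$. The key observation is that any automorphism $\psi$ of $G$ satisfying $\psi(X)=X$ restricts to a bijection $\psi|_X$ of $V(H)=X$; since $\psi$ preserves adjacency and non-adjacency in $G$ and $H$ is the \emph{induced} subgraph on $X$, this restriction preserves adjacency and non-adjacency within $X$, so $\psi|_X\in\mathrm{Aut}(H)$. Thus it suffices to produce, for each $u,v\in X$, a block-preserving automorphism of $G$ sending $u$ to $v$.

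First I would fix $u,v\in X$ and use the vertex-transitivity of $G$ to obtain an automorphism $\phi$ of $G$ with $\phi(u)=v$. The remaining task is to upgrade $\phi$ to one that fixes $X$ setwise. Here the definition of imprimitive block does the work: because $u\in X$ we have $v=\phi(u)\in\phi(X)$, and also $v\in X$, so $v\in \phi(X)\cap X$; in particular $\phi(X)\cap X\neq\emptyset$. By the block property the only two alternatives are $\phi(X)=X$ and $\phi(X)\cap X=\emptyset$, and the latter is now excluded, forcing $\phi(X)=X$.

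Combining the two steps, $\phi$ is a block-preserving automorphism of $G$ with $\phi(u)=v$, so $\phi|_X$ is an automorphism of $H$ sending $u$ to $v$. Since $u,v$ were arbitrary, $H$ is vertex-transitive.

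The argument is essentially forced once the definitions are unpacked, so I do not anticipate a genuine obstacle; the only point requiring a moment's care is the reduction step, namely verifying that a setwise-stabilizing automorphism of $G$ really restricts to an automorphism of the \emph{induced} subgraph $H$ (and not merely to a bijection of its vertex set). This relies precisely on $H$ being induced together with $\phi(X)=X$, which guarantees that a vertex pair of $X$ and its image pair both lie inside $X$, so that edges and non-edges of $H$ are preserved.
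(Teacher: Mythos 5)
Your proof is correct and complete: the two steps (a setwise-stabilizing automorphism of $G$ restricts to an automorphism of the induced subgraph $G[X]$, and the block dichotomy $\psi(X)=X$ or $\psi(X)\cap X=\emptyset$ upgrades any automorphism moving $u\in X$ to $v\in X$ into a block-preserving one) are exactly what is needed. Note that the paper itself gives no proof of this statement --- it is quoted from the reference [Tindell, \emph{Connectivity of Cayley graphs}] --- so there is no internal argument to compare against; your proof is the standard one and fills that gap correctly.
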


A vertex subset $X$ is called  a \emph{cyclic edge-fragment}, if $\partial{(X)}$ is a minimum cyclic edge-cut. Let $X$ be a cyclic edge-fragment. If neither $X$ nor $\overline{X}$ induces a shortest cycle,
then we call $X$ a \emph{super cyclic edge-fragment}.  A super cyclic edge-fragment with the
minimum cardinality is called a \emph{super cyclic edge-atom}. By definition, if $X$ is a super atom, then $|\overline{X}|\geq |X|$. For simplicity of statement,
we shall use super atom to stand for super  cyclic edge-atom in this paper. 
For any two disjoint vertex subsets $X$ and $Y$ in $V(G)$, let $E(X,Y)$ denote the set of edges between $X$ and $Y$. 


\begin{Lemma}\label{super cyclically 5-edge-connected}
Let $G$ be a cubic vertex-transitive graph of girth five. Then $G$ is super cyclically 5-edge-connected.
\end{Lemma}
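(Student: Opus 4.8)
The plan is to show that $G$ has no \emph{super cyclic edge-fragment}, i.e. no cyclic edge-fragment $X$ with neither $X$ nor $\overline{X}$ inducing a shortest cycle; this is exactly the assertion that $G$ is super cyclically $5$-edge-connected. By Theorem~\ref{Nedela95} we have $c\lambda(G)=g=5$, so every cyclic edge-cut has at least $5$ edges. Suppose for contradiction that a super fragment exists, and let $X$ be a \emph{super atom}. Then $d(X)=5$ and both $G[X]$ and $G[\overline{X}]$ contain cycles. First I record the elementary facts. Since $G$ is cubic, $3|X|-2\,||G[X]||=d(X)=5$, so $|X|$ is odd. If some $v\in X$ were isolated in $G[X]$, then $X\setminus\{v\}$ would still contain a cycle while $d(X\setminus\{v\})=5-3=2<3=\lambda(G)$, contradicting Theorem~\ref{mader}; hence $G[X]$, and likewise $G[\overline{X}]$, has no isolated vertex. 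If $|X|=5$ then $G[X]$ is a $5$-vertex graph with $5$ edges and girth at least $5$, forcing $G[X]=C_5$, a shortest cycle, which is excluded; so $|X|\geq 7$, and with $|\overline{X}|\geq|X|$ this gives $|G|\geq 14$. The cases $|G|\in\{10,12\}$ are settled by Lemma~\ref{order10and12}, so I may assume $|G|\geq 14$.

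The decisive observation is that the conclusion is immediate once the super atom is known to be an imprimitive block. Indeed, for any automorphism $\psi$ the image $\psi(X)$ is again a super atom, since $\psi$ preserves $d$, cardinalities, cycles and the girth; so it suffices to prove that any two super atoms are equal or disjoint, which is precisely the statement that $X$ is an imprimitive block. Granting this, Theorem~\ref{imprimitive} makes $G[X]$ vertex-transitive, hence $r$-regular for some $r$, and summing degrees gives $r|X|=2\,||G[X]||=3|X|-5$, i.e. $(3-r)|X|=5$. As $|X|\geq 7$ this is impossible unless $r=2$ and $|X|=5$; but a $2$-regular graph on $5$ vertices of girth at least $5$ is $C_5$, contradicting that $X$ is a super fragment. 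This contradiction finishes the proof.

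The main obstacle is therefore the disjointness of super atoms, which I would attack with the submodularity of $d$ (Lemma~\ref{submodular}) together with the connectivity dichotomies available for $G$. The key pruning tools are: a region $A$ with $d(A)=3$ must be a single vertex (by Corollary~\ref{notriangletrivial}, as $G$ is triangle-free); a region $A$ with $d(A)=4$ whose two sides have no isolated vertex is a minimum restricted edge-cut, so by Theorem~\ref{Wang04} ($G$ is super-$\lambda^{(2)}$ with $\lambda^{(2)}(G)=4$) it isolates an edge, whence $A$ or $\overline{A}$ is a $K_2$ and contains no cycle; and any $A$ with cycles on both sides has $d(A)\geq c\lambda(G)=5$. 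Now let $X,Y$ be distinct super atoms with $X\cap Y\neq\emptyset$; since $|X|=|Y|$ neither contains the other, so $X\cap Y$, $X\setminus Y$, $Y\setminus X$ are all nonempty. I would then apply the two submodular inequalities $d(X\cap Y)+d(X\cup Y)\leq d(X)+d(Y)=10$ and $d(X\setminus Y)+d(Y\setminus X)\leq 10$, and distribute the shortest cycles of $X,\overline{X},Y,\overline{Y}$ among the four regions $X\cap Y$, $X\setminus Y$, $Y\setminus X$, $\overline{X\cup Y}$. Whenever a corner region and its complement both carry cycles it is a cyclic edge-fragment of cardinality smaller than $|X|$, which by minimality of the super atom must itself induce a shortest cycle; the remaining corners have $d\in\{3,4\}$ and are therefore singletons or copies of $K_2$ carrying no cycle. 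Tracking which regions hold the four cycles and comparing cardinalities then forces $X=Y$, the desired contradiction. Carrying out this case analysis cleanly — in particular ruling out the configurations where a $5$-cycle sits inside $X\cap Y$ — is where the real work lies; everything else follows from the degree count above.
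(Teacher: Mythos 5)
You follow the same overall route as the paper: reduce the lemma to showing that super atoms are pairwise equal or disjoint, invoke Theorem \ref{imprimitive} to conclude that $G[X]$ is vertex-transitive, and finish with a degree count (your regularity computation $(3-r)|X|=5$ is in fact a cleaner finish than the paper's observation that $G[X]$ has vertices of two different degrees). But there is a genuine gap: the disjointness of two distinct intersecting super atoms --- which you yourself defer as ``where the real work lies'' --- is the actual core of the lemma, and it is not proved; what you give is a list of tools and an intention. Moreover, the gap cannot be closed from the bounds you established. The four-region analysis with $A=X\cap Y$, $B=X\setminus Y$, $C=Y\setminus X$, $D=\overline{X\cup Y}$ must begin by showing $|A|\geq 4$: if $|A|\leq 3$ then $|C|=|Y|-|A|\geq 6$, and since one may assume (after swapping $B$ and $C$) that $|E(A,C)|\leq|E(A,B)|$, which gives $d(C)\leq d(Y)=5$, the count $||G[C]||\geq(3|C|-5)/2\geq|C|$ forces a cycle in $G[C]$, contradicting the minimality of the super atom $Y$. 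That count requires $|C|\geq 5$, hence requires $|Y|\geq 9$; your bound $|X|\geq 7$ only yields $|C|\geq 4$, where the inequality fails and the whole case analysis stalls.

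The missing ingredient that produces $|X|\geq 9$ is the independence of the degree-2 vertices of $G[X]$, and this is exactly where the subgraph $H$ of Figure \ref{351} enters the paper's proof: two adjacent degree-2 vertices $u,v\in X$ make $\partial(X\setminus\{u,v\})$ a cyclic 5-edge-cut, so by minimality $X\setminus\{u,v\}$ induces a pentagon and $G$ contains $H$. The paper therefore first proves, by a separate and nontrivial claim using Corollary \ref{notriangletrivial} and Theorem \ref{Wang04}, that a graph containing $H$ has order 10 or 12, disposes of those orders by Lemma \ref{order10and12}, and only then runs the imprimitive-block argument for $H$-free graphs. None of this preprocessing appears in your proposal. (A smaller omission: you exclude only degree-0 vertices from $G[X]$, but degree-1 vertices must also be excluded, since deleting one leaves a cyclic 4-edge-cut.) Finally, even granting all the bounds, the endgame is not ``ruling out'' a pentagon inside $X\cap Y$ as you anticipate: the paper instead shows that $G[A]$ and $G[C]$ are both forced to be pentagons and then derives a parity contradiction, namely $|Y|=10$ is even while $d(Y)=3|Y|-2||G[Y]||=5$ is odd. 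Until the disjointness step is actually carried out, your proposal is a plan rather than a proof.
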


\begin{proof}

As we will see, that $G$ does not contain $H$ (see Figure \ref{351} (left)) as a subgraph is pivotal to prove the super cyclically 5-edge-connectivity. Hence we will first consider the properties of $G$ when it contains $H$ as a subgraph and obtain the following claim.

\smallskip

\vskip0.1cm

\noindent {\bf Claim.} If $G$ contains $H$ as a subgraph, then $|G|=10$ or 12.
 
\vskip0.1cm

\smallskip

Since $G$ is of girth five, $\{d,g,e,z\}$ induces a matching of size two in $G$. In other words, $a$ has two neighbors $b$ and $c$ satisfying that  $(N(b)\cup N(c) )\setminus \{a\}$ induces a matching of size two, where $N(v)$ denotes the neighborhood of a vertex $v$ in $G$. By the vertex-transitivity, every vertex of $G$ has such a property. 

\begin{figure}[!htbp]
\begin{center}
\includegraphics[totalheight=3.2cm]{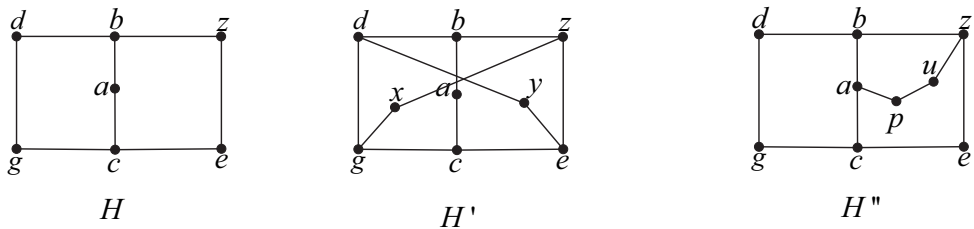}
 \caption{\label{351}The illustration for the proof.}
\end{center}
\end{figure}

We now consider the vertex $b$.  If $d$ and $z$ has the property that $(N(d)\cup N(z)) \setminus \{b\}$ induces a matching of size two, then also since $G$ is of girth five, $y$ should be adjacent to $e$ and $x$ must be adjacent to $g$, where $y$ and $x$ are neighbors of $d$ and $z$ respectively. Denote the appearing subgraph of $G$ until now by $H'$ (Figure \ref{351} (middle)). Since $|H'|=9$ is odd and $G$ is of even order, $V(G)\setminus V(H')\neq \emptyset$. Therefore, $\partial(H')$ is an edge-cut. By Theorem \ref{mader}, $d(H')\geq 3$. On the other hand, since $H'$ has  three vertices ($x$, $y$ and $a$) of degree two, $G[V(H')]$ has at most three vertices of degree two, where $G[S]$ denotes the subgraph of $G$ induced by $S\subseteq V(G)$. It follows that $d(H')\leq 3$. Hence $d(H')=3$ and $\partial(H')$ is an edge-cut of size three. By Corollary \ref{notriangletrivial}, $\partial(H')$ isolates a vertex. Therefore, $|G|=10$.  If $d$ and $a$ or $z$ and $a$ has the property that $(N(d)\cup N(a)) \setminus \{b\}$ or  $(N(z)\cup N(a)) \setminus \{b\}$ induces a matching of size two, by symmetry, we may suppose that $z$ and $a$ has such a property, then by a similar argument as above, we obtain a subgraph $H''$ of $G$ (Figure \ref{351} (right)). 

 \begin{figure}[!htbp]
\begin{center}
\includegraphics[totalheight=2.7cm]{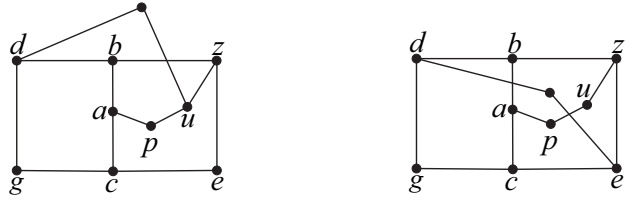}
 \caption{\label{352} $F'$ (left) and $F''$ (right).}
\end{center}
\end{figure}

 If $H''$ is not an induced subgraph, then there are at most three vertices of degree two in $G[V(H'')]$, by a similar argument as for $H'$, we can show that $|G|=10$. If $H''$ is an induced subgraph, then we consider the vertex $z$. If $b$ and $u$ (resp. $b$ and $e$) has the property that $(N(b)\cup N(u)) \setminus \{z\}$ (resp. $(N(b)\cup N(e)) \setminus \{z\}$) induces a matching of size two, then we obtain a subgraph $F'$ (resp. $F''$) of $G$ shown in Figure \ref{352} (left) (resp. Figure \ref{352} (right)). Let $F$ denote $F'$ or $F''$. We can see that there are four vertices of degree two in $F$. If $F$ is not an induced subgraph of $G$, then $d(F)\leq 2$. Since $G$ is 3-edge-connected, $V(G)\setminus V(F)=\emptyset$. Consequently, $|G|=10$. If $F$  is an induced subgraph of $G$, then $\partial{(F)}$ is an edge-cut of size four. By Theorem \ref{Wang04}, $\partial{(F)}$   isolates an edge. It follows that $|G|=12$. We are left to the case that $u$ and $e$ has the property that $(N(u)\cup N(e)) \setminus \{z\}$ induces a matching of size two.  If so, then $u$ must be adjacent to a vertex in $N(c)\setminus \{e\}$, that is, $u$ is adjacent to $a$ or $g$, contradicting that $H''$ is an induced subgraph of $G$. This finally completes the proof of the claim.

\vskip 0.1cm

\smallskip

Combining the above claim and Lemma \ref{order10and12}, we obtain that if $G$ contains $H$ as a subgraph, then it is super cyclically 5-edge-connected. Hence in the following, we assume that $G$ does not contain $H$ as a subgraph. By Theorem \ref{Nedela95}, $G$ is cyclic 5-edge-connected. Suppose by the contrary that $G$ is not super cyclically
5-edge-connected. Then $G$ contains a super atom. Let  $X$ be any given super atom. By definition, $|X|\geq 6$. We  claim that there are only degree-2 and degree-3 vertices in $G[X]$. Otherwise, suppose $x$ is a vertex of degree one in $G[X]$ and $y$ be the neighbor of $x$ in $G[X]$. Then $\partial(X\setminus\{x\})$ is a cyclic edge-cut of size four, contradicting that $G$ is cyclic 5-edge-connected.
Further, we show that the set of degree-2 vertices in $G[X]$ is an  independent set.
Suppose by the contrary that there are two adjacent degree-2
vertices denoted by $u$ and $v$ in $G[X]$. Let $X'=X \backslash
\{u,v\}$. Then $|X'|\geq4$ and $d(X')$=5. If $G[X']$ contains cycles, then $\partial(X')$ is a cyclic 5-edge-cut. Consequently, it must be a cycle of length five since $X$ is a super atom. It follows that we obtain $H$ as a subgraph, a contradiction. If $X'$ does not contain a cycle, then $G[X']$ contains at most $|X'|-1$ edges. Moreover, 
$$5=d(X')\geq 3|X'|-2(|X'|-1)=|X'|+2\geq6,$$
 a contradiction.

Now we show that $|X|\geq 9$.  In $G[X]$, on the one hand,  the five  independent degree-2
vertices send out ten edges to the other vertices; on the other hand, the other vertices are of degree three. Hence there are at least four vertices of degree three in $G[X]$, which implies that $|X|\geq 9$.

Finally, we prove that every super atom is an imprimitive block. If this holds, then by Theorem \ref{imprimitive},  $G[X]$ is vertex-transitive. But there are degree-2 and degree-3 vertices in it, a contradiction.

To prove this, it suffices to show that for two super atoms $X$ and $Y$, either $X=Y$
or $X\cap Y=\emptyset$. Suppose not. That is, $X\neq Y$ and $X\cap Y\neq \emptyset$. Let $A=X\cap Y\neq \emptyset$, $B=X\backslash
Y$, $C=Y\backslash X$ and $D=V(G)\backslash (X \cup Y)$. Since $Y$
is a super atom, $\partial(Y)=\partial(\overline{Y})=\partial{(B\cup D)}$ is a cyclic 5-edge-cut. Also since $X=B\cup A$ is a super atom, $|D|\geq |A|$
holds. By symmetry, we may assume that
$|E(A,C)|\leq |E(A,B)|$. Then 
\begin{equation*}
\begin{aligned}
d(C)&=|E(C, A)|+|E(C, B)|+|E(C, D)|\\
&=|E(A, C)|+|E(C, B)|+|E(C, D)|\\&\leq|E(A, B)|+|E(C, B)|+|E(C, D)|+|E(A, D)|\\&=d(Y)= c\lambda(G)=5.
\end{aligned}
\end{equation*}

We claim that $|A|\geq 4$. Suppose by the contrary that $|A|\leq 3$.
Then $|C|\geq 6$ by $Y=A\cup C$ and $|Y|\geq 9$ ($Y$ is a super atom). Hence
$$||G[C]||=\frac{3|C|-d(C)}{2}\geq
\frac{3|C|-5}{2}\geq |C|.$$
 It follows that $G[C]$ contains cycles. This contradicts
that $Y$ is a super atom, $|C|\geq 6$ and $A\neq \emptyset$.

Since $|D|\geq |A|\geq 4$,  if $G[D]$ (resp. $G[A]$) does not contain cycles, we have 
$$d(D)\geq 3|D|-2(|D|-1)=|D|+2\geq6$$
$$({\rm resp.} ~~d(A)\geq 3|A|-2(|A|-1)=|A|+2\geq6).$$
If $G[D]$ contains cycles, then $\partial(D)$ is a cyclic edge-cut and $d(D)\geq 5$ follows. In a word,  $d(D)\geq 5$.
By Lemma \ref{submodular}, we have 
$$d(X\cup Y)+d(X\cap Y)\leq d(X)+d(Y);$$
that is,
$$d(D)+d(A)\leq d(X)+d(Y)=10.$$
Therefore,  $d(A)\leq 5$ by $d(D)\geq 5$.
Further, by the above argument, we can see that $G[A]$
contains cycles. Hence $\partial(A)$ is a cyclic edge-cut and $|A|\geq5$ follows. Since $Y$ is a super atom, $|A|\leq5$. Hence $|A|=5$ and $G[A]$ is isomorphic to a cycle of length five. It follows that  $|C|\geq
4$. By 
$$\partial(C)=E(C,A)\cup E(C,B)\cup E(C,D),$$
we have,

$$d(C)=|E(C,A)|+|E(C,B)|+|E(C,D)|.$$
Therefore, by $|E(C,A)|\leq |E(A,B)|$, we have

\begin{equation*}
\begin{aligned}
d(C)&=|E(C,A)|+|E(C,B)|+|E(C,D)|\\&\leq|E(A,B)|+|E(C,B)|+|E(C,D)|+|E(A,D)|\\&=d(Y)=5.
\end{aligned}
\end{equation*}

Since $|C|\geq 4$ and $d(C)\leq 5$, by a similar argument as for $G[A]$, we obtain that $G[C]$ contains cycles and further $G[C]$ must be a cycle of length five, too. Consequently,
$|Y|=10$ is even. Then  $d(Y)=3|Y|-2||G[Y]||$ is even, but $d(Y)=5$ is odd, a contradiction. 
\end{proof}

 \begin{figure}[!htbp]
\begin{center}
\includegraphics[totalheight=6.5cm]{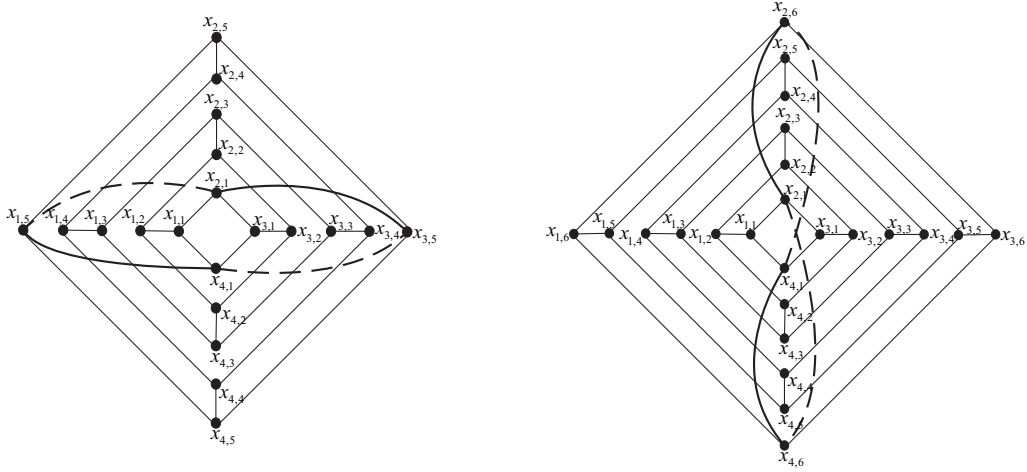}
 \caption{\label{g4notsuper} Graphs $T_{5}$ (left) and $T_{6}$ (right), where $M$ is chosen either the set of bold edges or the dashed edges.}
\end{center}
\end{figure}

For the super cyclically edge-connectivity of cubic vertex-transitive graphs of girth four, we obtain a similar but  little weaker result. Precisely, certain structure (adjacent quadrangles) is forbidden and a class of graphs ($T_{m}$,  $m\geq 2$)  which do not contain adjacent quadrangles are excluded. 
Two quadrangles in $G$ are \emph{adjacent} if they share common vertices
or edges. The class of graphs $T_{m}$ which do not contain adjacent quadrangles are  defined as follows, in which, the first  subscripts of $x$ are taken module $4$.
For odd $m=2k+1$,  $V(T_{m})=\{x_{i,j}|1\leq i \leq 4, 1\leq j\leq m\}$ and $E(T_{m})=\{x_{i,j}x_{i+1,j} |1\leq i \leq 4, 1\leq j \leq m\}\cup \{x_{1,2j-1}x_{1,2j} | 1\leq j\leq k\}\cup \{x_{3,2j-1}x_{3,2j} | 1\leq j\leq k\} \cup \{x_{2,2j}x_{1,2j+1} | 1\leq j\leq k\}\cup \{x_{4,2j}x_{4,2j+1} | 1\leq j\leq k\}\cup M$, where $M=\{x_{1,m}x_{2,1}, x_{3,m}x_{4,1}\}$ or $\{x_{1,m}x_{4,1}, x_{3,m}x_{2,1}\}$. For even $m=2k$,    $V(T_{m})=\{x_{i,j}|1\leq i \leq 4, 1\leq j\leq m\}$ and $E(T_{m})=\{x_{i,j}x_{i+1,j} |1\leq i \leq 4, 1\leq j \leq m\}\cup \{x_{1,2j-1}x_{1,2j} | 1\leq j\leq k\}\cup \{x_{3,2j-1}x_{3,2j} | 1\leq j\leq k\} \cup \{x_{2,2j}x_{1,2j+1} | 1\leq j\leq k\}\cup \{x_{4,2j}x_{4,2j+1} | 1\leq j\leq k\}\cup M$, where $M=\{x_{2,m}x_{2,1}, x_{4,m}x_{4,1}\}$ or $\{x_{2,m}x_{4,1}, x_{4,m}x_{2,1}\}$. Please see Figure \ref{g4notsuper} for example.

\begin{Lemma}\label{g4superaaa}
Let $G$ be a cubic vertex-transitive graph of girth four and do not contain adjacent quadrangles. If in addition, $G$ is not isomorphic to $T_{m}$ for some integer $m$, then $G$ is super cyclically
4-edge-connected.
\end{Lemma}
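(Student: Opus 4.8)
The plan is to argue by contradiction in the spirit of the proof of Lemma \ref{super cyclically 5-edge-connected}, the essential new feature being that the parity obstruction which closes the girth-five argument evaporates for girth four, and this is precisely what lets the family $T_m$ appear. By Theorem \ref{Nedela95} we have $c\lambda(G)=4$, so every minimum cyclic edge-cut has exactly four edges. Assume $G$ is not super cyclically $4$-edge-connected and let $X$ be a super atom, so $\partial(X)$ is a cyclic $4$-edge-cut and neither $X$ nor $\overline{X}$ induces a quadrangle. First I would fix the local degree structure of $G[X]$: a vertex of degree at most one in $G[X]$ could be deleted to yield a cyclic edge-cut of size at most three, contradicting $c\lambda(G)=4$, so $G[X]$ has only vertices of degree two and three. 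Since $\sum_{v\in X}(3-\deg_{G[X]}(v))=d(X)=4$, there are exactly four degree-two vertices in $G[X]$, each carrying one edge of $\partial(X)$, and all other vertices have degree three.

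Next I would use girth four together with the hypothesis forbidding adjacent quadrangles to force a quadrangle structure. If two degree-two vertices were adjacent, deleting them produces a set $X'$ with $d(X')=4$; if $G[X']$ still contained a cycle, minimality of the super atom would make $G[X']$ a quadrangle, and this quadrangle together with the two deleted vertices would yield two adjacent quadrangles, against the hypothesis, while the acyclic case is impossible because then $d(X')\ge 3|X'|-2(|X'|-1)=|X'|+2>4$ once $|X'|\ge 3$. Tracking where the degree-two vertices lie then gives a lower bound on $|X|$ and shows that a super atom is strictly larger than a single quadrangle.

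The core of the argument is the comparison of two distinct super atoms $X$ and $Y$ with $A=X\cap Y\neq\emptyset$, together with $B=X\setminus Y$, $C=Y\setminus X$ and $D=V(G)\setminus(X\cup Y)$. Repeating the submodularity computation from Lemma \ref{submodular} gives $d(D)\ge 4$ and hence $d(A)\le 4$; since $G[A]$ must then contain a cycle, $\partial(A)$ is a cyclic edge-cut, so $|A|\ge 4$ by girth four and $|A|\le 4$ by minimality, forcing $G[A]$ to be a quadrangle, and symmetrically $G[B]$ and $G[C]$ are quadrangles. This is exactly where girth four diverges from girth five: there the analogous step produced an even vertex set of odd cut size and thus a contradiction, whereas here $d(Y)=4$ is even and no contradiction is available. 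Instead each super atom is a union of two quadrangles sharing a common quadrangle with a neighbouring super atom, and the counting of the four degree-two vertices shows each quadrangle is joined to each of its two neighbouring quadrangles by exactly two edges. The overlapping super atoms therefore organize $G$ into a ring of pairwise vertex-disjoint quadrangles.

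Finally I would globalize this local picture using vertex-transitivity. Were no two super atoms to meet, they would be pairwise equal or disjoint, hence imprimitive blocks, and Theorem \ref{imprimitive} would make the mixed-degree graph $G[X]$ vertex-transitive, which is impossible; so intersecting super atoms must exist and the ring-of-quadrangles structure is unavoidable. I would then show that vertex-transitivity propagates the two-quadrangle linkage uniformly around the ring, and that being cubic, of girth four, and free of adjacent quadrangles leaves exactly the two admissible ways of matching the connecting edges when the ring closes up --- precisely the two choices of $M$ in the definition of $T_m$ --- thereby identifying $G$ with some $T_m$ and proving the contrapositive. The step I expect to be hardest is this reconstruction: with no numerical contradiction to fall back on, one must carefully bookkeep the connecting edges between consecutive quadrangles and verify, using only vertex-transitivity, that the global graph is forced to be one of the $T_m$ and that no sporadic small-order exceptions slip through.
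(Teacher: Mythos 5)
Your overall strategy coincides with the paper's: analyze a super atom, show its degree-two vertices are independent, compare two intersecting super atoms via submodularity to force $G[A]$, $G[B]$, $G[C]$ to be quadrangles, and then propagate this structure by vertex-transitivity to reconstruct $T_m$ (with the imprimitive-block alternative supplying the contradiction via Theorem \ref{imprimitive}). However, there is a genuine gap in your independence step. When two degree-two vertices $u,v$ of $G[X]$ are adjacent, you delete them, conclude (correctly) that $G[X\setminus\{u,v\}]$ must induce a quadrangle $Q$, and then assert that $Q$ together with the two deleted vertices yields two adjacent quadrangles. This is false when the attachment points are opposite: $u$ and $v$ each send exactly one edge into $Q$ (they have degree two in $G[X]$ and are adjacent to each other), and if their neighbours $a,b\in V(Q)$ are opposite corners of $Q$, then the only quadrangle in $G[Q\cup\{u,v\}]$ is $Q$ itself --- the cycle through $u$ and $v$ has length five --- so no violation of the no-adjacent-quadrangles hypothesis arises. (Your argument does work when $a,b$ are adjacent corners.)

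This opposite-corner configuration --- a quadrangle with a ``handle'', namely a path of length three through two outside vertices joining two opposite corners --- is precisely the subgraph that the paper rules out in a separate preliminary step (the left graph of Figure \ref{g4super}), and ruling it out is not free: the paper first observes that in a cubic vertex-transitive graph of girth four without adjacent quadrangles every vertex lies in exactly one quadrangle, so the outside vertex $w$ of the handle lies in a new quadrangle; an automorphism carrying $z$ to $w$ then carries the quadrangle-with-handle structure at $z$ onto one at $w$, and this forces an edge that closes a second quadrangle sharing an edge with the first, contradicting the hypothesis. Without this forbidden-subgraph argument (or some substitute), your proof that the degree-two vertices of $G[X]$ are independent --- and hence the bound $|X|\ge 7$ and everything downstream --- does not go through. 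The remainder of your outline (submodularity, the three nested quadrangles, the ring propagation ending at $T_m$) matches the paper's proof, though you leave the final reconstruction as a plan rather than carrying it out.
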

\begin{proof}
First, we show that $G$ cannot contain the left graph in Figure \ref{g4super} as a subgraph. Suppose not. Since $G$ is cubic and does not contain adjacent quadrangles, every vertex is contained in exactly one quadrangle. Consequently, $w$ must lie in a new quadrangle other than the one containing $z$. We then obtain a subgraph of $G$, shown in the right one of Figure \ref{g4super}.  Let $Q$ be the quadrangle containing $z$. We can see that $a$ and $z$ lie on the opposite position of $Q$ and, $w$ and $b$, the neighbors of them in $V(G)\setminus V(Q)$, are adjacent. By the vertex-transitivity, $w$ should have the same property as $z$, that is, if we denote the quadrangle containing $w$ be $Q'$, then $y$ lies on the opposite site of $w$ in $Q'$ and further the neighbors of $w$ and $y$ in $V(G)\setminus V(Q')$ are adjacent. It follows that  $y$ should be adjacent to $u$ (resp. $v$), resulting two adjacent quadrangles $Q$ and $uaby$  (resp. $Q$ and $vaby$), a contradiction.

 \begin{figure}[!htbp]
\begin{center}
\includegraphics[totalheight=3cm]{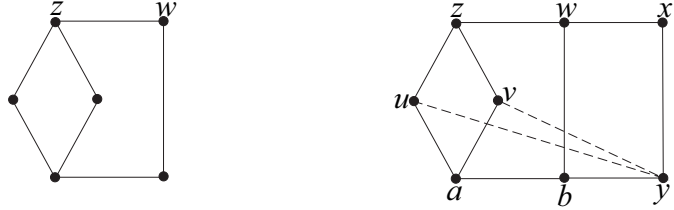}
 \caption{\label{g4super}Forbidden structures of $G$.}
\end{center}
\end{figure}

Next, we show that $G$ is super cyclically
4-edge-connected. Suppose by the contrary not. Then there is a super atom, denoted by $X$ with $|X|\geq 5$. By a completely similar examination as the proof for the case of girth five, we can show that the degree-2 vertices in $G[X]$ are independent and further $|X|\geq 7$. We also finish our proof by proving that every super atom is an imprimitive block. Suppose not. Then following the notations as in the case of girth five, we can show that $|A|\geq 3$ and further 
 both $G[A]$ and $G[C]$ are isomorphic to a cycle of length four.
Similarly as $G[C]$, we obtain that $G[B]$ is isomorphic to a cycle of length four, too. 
Since $X$ and $Y$ are super atoms, that is, $d(X)=d(Y)=4$, there are two edges connecting $A$ and $C$, and two edges connecting $A$ and $B$. Moreover, since the degree-2 vertices are independent in $G[X]$ and $G[Y]$, we obtain  a subgraph of $G$ as follows (see Figure \ref{g4substructure}), where the inner, middle and outer quadrangles stand for $G[B]$, $G[A]$ and $G[C]$ respectively.

 \begin{figure}[!htbp]
\begin{center}
\includegraphics[totalheight=4cm]{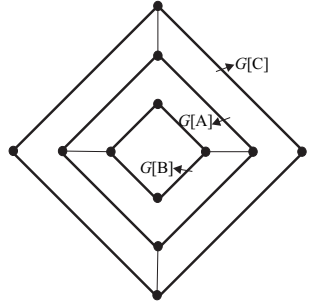}
 \caption{\label{g4substructure}$G[A\cup B \cup C]$.}
\end{center}
\end{figure}

Since $G$ is vertex-transitive and does not contain adjacent quadrangles,   $G[C]$ should have the same property as $G[A]$. That is, $G[C]$ is connected to two other different quadrangles like the way as $G[A]$. If $G[C]$ is connected to $G[B]$, then we obtain that $G$ is isomorphic to $T_{3}$; otherwise, $G[C]$ is connected to a new quadrangle, denoted by $Q'$. Continuously, since $Q'$ has the same property as $G[A]$, either we finish at $T_{4}$ or obtain another new quadrangle. As $G$ is finite, this process must end in a finite step. Therefore, we obtain $G$ is isomorphic to $T_{m}$ for some integer $m$, a contradiction.
\end{proof}

At the end of this subsection, we take a little time to show that $T_{m}$ is 2-extendable. The 2-extendability of toroidal fullerenes, whose definition can be found in \cite{Dong08}, will  be used. The results in that paper can be summarized as follows.

\begin{thm}[\cite{Dong08}]\label{Dong08}
A toroidal fullerene of girth at least four is 2-extendable.
\end{thm}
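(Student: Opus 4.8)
The plan is to prove that a toroidal fullerene $G$ --- a cubic graph carrying an all-hexagonal embedding on the torus, hence of even order --- is $2$-extendable by verifying the defining condition directly: for every matching $M=\{e_1,e_2\}$ of size two, the graph $G-V(M)$ obtained by deleting the four endpoints admits a perfect matching. First I would record the structural data I intend to exploit. The graph is $3$-regular, and its cyclic edge-connectivity equals its girth, $c\lambda(G)=g\geq 4$: toroidal polyhexes are cyclically optimal, and when $G$ is in addition vertex- or edge-transitive this is exactly Theorem \ref{Nedela95}. Consequently every $3$-edge-cut of $G$ has an acyclic side, and since a forest side of a $3$-edge-cut in a cubic graph carries precisely $t+2c$ boundary edges when it has $t$ vertices forming $c$ tree-components, such a side must be a single vertex.

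The core is a Tutte-type argument. Suppose for contradiction that $G-V(M)$ has no perfect matching. Since $|G-V(M)|$ is even, Tutte's theorem supplies a set $S\subseteq V(G-V(M))$ with $o(G-V(M)-S)\geq |S|+2$; write $C_1,\dots,C_m$ (with $m\geq |S|+2$) for the odd components of $G-V(M)-S$. There are no edges between distinct $C_i$, nor between any $C_i$ and an even component, so every boundary edge of $\bigcup_i C_i$ lands in $S\cup V(M)$; counting with $3$-regularity gives $\sum_i d(C_i)\leq 3|S|+8$, the $+8$ arising because the four vertices of $V(M)$ are matched in two pairs and hence emit at most eight edges out of $V(M)$. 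On the other hand each odd $C_i$ emits an odd number of boundary edges, and by the preceding paragraph any $C_i$ with $d(C_i)=3$ is a single vertex, while every other odd component has $d(C_i)\geq 5$. Comparing $\sum_i d(C_i)\geq 3m+2\cdot\#\{i: d(C_i)\geq 5\}$ with the upper bound forces at most one non-singleton odd component and at least $|S|+1$ singleton components, pinning down the local picture very tightly.

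The decisive step is to rule this configuration out, and this is where the girth hypothesis and the torus geometry must enter. The residual obstruction is a near-independent family of at least $|S|+1$ degree-three singleton components, each attached to $S\cup V(M)$, and I would eliminate it by exploiting that the faces of $G$ are hexagons. When $g\geq 5$ no two singletons can share two common neighbours (that would create a forbidden $4$-cycle), so a convexity estimate on the bipartite incidences between the singletons and $S\cup V(M)$, combined with Euler's relation $V-E+F=0$ on the torus, contradicts the presence of so many singletons. The genuinely delicate case is $g=4$, where $c\lambda(G)=4$ and the crude count does not close on its own: here I would pass to the explicit parametrization of toroidal polyhexes as quotients of the honeycomb lattice by a rank-two translation lattice (the short non-facial cycles forcing small parameters), cut the torus along a shortest non-contractible cycle to unroll $G$ into a cylindrical strip, and construct the perfect matching of $G-V(M)$ by a rotational transfer argument that is uniform in the placement of $M$.

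The main obstacle is therefore not the matching count but the geometry of the girth-four family, which is exactly the family containing the graphs $T_m$ introduced above: one must show that deleting any four matched vertices from such a thin torus still leaves a region tileable by the brick-wall matching pattern, and then finish with a finite verification of the few smallest toroidal fullerenes of girth four and five, where the asymptotic counting and the unrolling construction both degenerate.
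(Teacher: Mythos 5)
Your proposal has a genuine gap, and it sits exactly at the step you yourself call ``decisive.'' Note first that the paper does not prove this statement at all: it is quoted from Ye and Zhang \cite{Dong08}, so your attempt has to stand on its own. The Tutte-type counting in your second paragraph is correct (with $m\geq |S|+2$ odd components, at most $3|S|+8$ boundary edges available from $S\cup V(M)$, odd parity of each $d(C_i)$, and triviality of $3$-edge-cuts, one indeed gets at most one non-singleton odd component and at least $|S|+1$ singletons), but this configuration is not self-contradictory: the Petersen graph and the circulants $Z_{4n}(1,4n-1,2n)$ and $Z_{4n+2}(2,4n,2n+1)$ are cubic, of girth at least four, cyclically optimal, and realize precisely this configuration --- that is exactly why they survive as exceptions in Theorem \ref{maintheorem}. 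So no contradiction can follow from the properties you have used up to that point; the entire content of the theorem lies in the part you only gesture at. ``A convexity estimate \dots\ combined with Euler's relation,'' ``a rotational transfer argument that is uniform in the placement of $M$,'' and ``a finite verification of the few smallest'' cases are declarations of intent, not arguments: no estimate is carried out, no matching is constructed, and the finite list is never identified. A smaller gap of the same kind: cyclic optimality via Theorem \ref{Nedela95} requires vertex- or edge-transitivity, which you invoke for toroidal fullerenes but never establish.

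The gap is also avoidable, because your framing misses the two structural facts that make this theorem comparatively easy. A toroidal fullerene is a quotient of the honeycomb lattice by a rank-two group of translations; translations preserve the two colour classes of the honeycomb, so every toroidal fullerene is \emph{bipartite}, and it is also vertex-transitive (both facts are recorded in \cite{Dong08}). For a bipartite graph, an analysis of odd components of $G-V(M)-S$ is the wrong tool; the right one is the Hall-type criterion of Lemma \ref{equivalent} combined with super-edge-connectivity (Theorem \ref{superlambda}), which is precisely how the paper proves Theorem \ref{bipartite}. Since a cubic graph is not a cycle, Theorem \ref{bipartite} then applies verbatim and yields the statement at once. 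So the honest routes are either to prove bipartiteness and vertex-transitivity and quote Theorem \ref{bipartite} (or reproduce its short Hall-plus-super-$\lambda$ proof), or to work with the explicit parametrization $H(p,q,t)$ as Ye and Zhang do; your hybrid --- non-bipartite-style Tutte counting followed by unexecuted torus geometry --- completes neither.
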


\begin{Lemma}\label{Tmextendable}
$T_{m}$ is 2-extendable.
\end{Lemma}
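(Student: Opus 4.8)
The plan is to recognise $T_m$ as a toroidal fullerene and then quote Theorem \ref{Dong08}. As a first step I would record the basic numerics: $T_m$ is cubic with $|T_m|=4m$, hence $\|T_m\|=6m$. Embedding $T_m$ in the torus and using Euler's formula $V-E+F=0$ forces any $2$-cell embedding to have $F=E-V=2m$ faces; since $2\|T_m\|=12m=6\cdot 2m$, the average face length is exactly $6$, so an embedding in which \emph{every} face is a hexagon is numerically consistent. The object of the main step is to exhibit precisely such an embedding, which is exactly what it means for $T_m$ to be a toroidal fullerene.

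For the embedding itself I would work directly from the defining description of $T_m$. The $m$ quadrangles $Q_j=x_{1,j}x_{2,j}x_{3,j}x_{4,j}$ ($1\le j\le m$) are to be laid out cyclically around the ``long'' generator of the torus, and the four families of connecting edges (the two ``horizontal'' families on rows $1$ and $3$, the two ``diagonal'' families on rows $2$ and $4$, and the closing set $M$) are read off as the rungs joining consecutive quadrangles. I would then trace the face boundaries in the induced rotation system and check that each closes up after six edges, so that each face is bounded by a pair of ``opposite'' edges of two consecutive quadrangles together with the rungs between them; the quadrangles $Q_j$ themselves are then non-facial $4$-cycles, which is exactly why the girth equals $4$ even though all faces are hexagons. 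The role of $M$, and of the parity split $m=2k+1$ versus $m=2k$ together with its two admissible choices, is precisely to make this hexagonal band close up consistently after wrapping once around the torus.

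With the hexagonal embedding in hand, $T_m$ is a toroidal fullerene; since it contains quadrangles but no triangles, its girth is exactly $4\ge 4$, so Theorem \ref{Dong08} yields that $T_m$ is $2$-extendable. I expect the only genuine work to lie in the middle step: setting up the rotation system carefully enough that every face-trace is really a hexagon, and checking that this survives the wrap-around in all of the cases (the two parities of $m$ and both choices of $M$). Should pinning down the embedding prove awkward, a fallback would be to verify $2$-extendability of $T_m$ by hand, taking an arbitrary pair of independent edges and extending it to a perfect matching by exploiting the highly regular band structure; but the toroidal-fullerene route via Theorem \ref{Dong08} is the shorter argument and the one to which the preceding set-up evidently points.
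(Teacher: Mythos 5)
Your proposal is correct and follows essentially the same route as the paper: the paper also establishes that $T_m$ is a toroidal fullerene --- by exhibiting an explicit bijection from $V(T_m)$ to the vertex set of the toroidal polyhex $H(2,m,0)$ or $H(2,m,1)$ (depending on the choice of $M$) --- and then invokes Theorem \ref{Dong08}. Your direct construction of a hexagonal torus embedding (with the quadrangles $Q_j$ as non-facial $4$-cycles) is just a coordinate-free phrasing of that same isomorphism, and your observation that the girth is $4$ supplies the hypothesis of Theorem \ref{Dong08} that the paper leaves implicit.
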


\begin{proof}

We make a bijection from  $V(T_{m})$ to the vertex set of a toroidal fullerene $H(2,m,0)$ or $H(2,m,1)$ according to the choice of $M$, see Figure \ref{toroidalfullerene} for $T_{5}$ as an example. Then we can easily check that $T_{m}$ is isomorphic to $H(2,m,0)$ or $H(2,m,1)$. By Theorem \ref{Dong08}, $T_{m}$ is 2-extendable.
 \begin{figure}[!htbp]
\begin{center}
\includegraphics[totalheight=5.5cm]{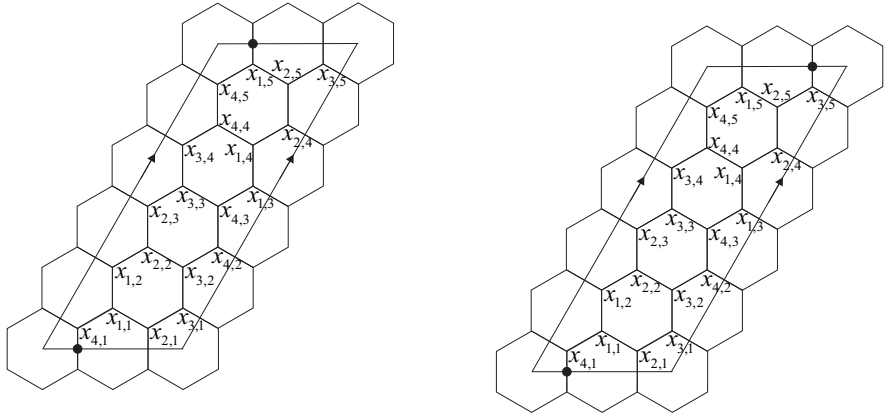}
 \caption{\label{toroidalfullerene} A bijection from $V(T_{5})$ to $H(2,m,0)$ (left) or $H(2,m,1)$ (right)  according to the choice of $M$.}
\end{center}
\end{figure}
\end{proof}

\subsection{Uniformly cyclically edge-connectivity} 

Another kind of cyclic edge-connectivity is also needed. We say that a cyclically $k$-edge-connected  cubic graph $G$ is
\emph{uniformly cyclically $k$-edge-connected}, denoted by $U(k)$, 
if and only if there are no removable edges in $G$. Note that an
edge in $G$ is removable if and only if it does not lie in a cyclic
$k$-edge-cut. Hence $G$ is $U(k)$ if and only if every edge lies in
a cyclic $k$-edge-cut. The authors in \cite{Aldred91} have characterized the uniformly cyclically 5-edge-connected cubic graphs with some restriction. In that paper, a class of graphs, named double ladder, are introduced.

\begin{defn}
An odd double ladder of length $k$ is a graph $G$ with vertex
set $V(G)=\{a_{i}: 1 \leq i \leq k\}\cup \{b_{i}: 1 \leq i \leq
2k+1\}\cup \{c_{i}: 1 \leq i \leq k+1\}$ and edge set
$E(G)=\{(a_{i}b_{2i}): 1 \leq i \leq k\}\cup \{(c_{i}b_{2i-1}): 1
\leq i \leq k+1\}\cup \{(a_{i}a_{i+1}): 1 \leq i \leq k-1\}\cup
\{(b_{i}b_{i+1}): 1 \leq i \leq 2k\}\cup \{(c_{i}c_{i+1}): 1 \leq i
\leq k\}\cup$ (an arbitrary matching between $\{a_{1}, b_{1},
c_{1}\}$ and $\{a_{k}, b_{2k+1}, c_{k+1}\}$).

An even double ladder of length $k$ is a graph $G$ with
vertex set $V(G)=\{a_{i}: 1 \leq i \leq k\}\cup \{b_{i}: 1 \leq i
\leq 2k\}\cup \{c_{i}: 1 \leq i \leq k\}$ and edge set
$E(G)=\{(a_{i}b_{2i}): 1 \leq i \leq k\}\cup \{(c_{i}b_{2i-1}): 1
\leq i \leq k\}\cup \{(a_{i}a_{i+1}): 1 \leq i \leq k-1\}\cup
\{(b_{i}b_{i+1}): 1 \leq i \leq 2k-1\}\cup \{(c_{i}c_{i+1}): 1 \leq
i \leq k-1\}\cup$ (an arbitrary matching between $\{a_{1}, b_{1},
c_{1}\}$ and $\{a_{k}, b_{2k}, c_{k}\}$).
\end{defn}

\begin{thm}[\cite{Aldred91}]\label{Aldred91}
Let $G$ be a $U(5)$ graph which contains a 5-cycle which doesn't
belong to a rosette. Then $G$ is either a double ladder or one of
$G_{1}$ and $G_{2}$ (see Figure \ref{rossete}).
\end{thm}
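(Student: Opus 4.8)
The plan is to take the given $5$-cycle as an anchor and to grow the ambient structure outward one \emph{rung} at a time, using the $U(5)$-hypothesis to supply a cyclic $5$-edge-cut through every edge and submodularity (Lemma~\ref{submodular}) to control how these cuts interact. Note first that the hypotheses force the girth to equal $5$: a cycle of length $g<5$ would make $\partial$ of that cycle a cyclic edge-cut of size $g<5$, contradicting cyclic $5$-edge-connectivity, and a $5$-cycle is given. Thus shortest cycles are pentagons throughout.

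First I would record the local picture. Write $C=v_1v_2v_3v_4v_5$ for the given $5$-cycle. Since $G$ is cubic, each $v_i$ has a unique neighbor $u_i\notin V(C)$, so $\partial(C)$ consists of exactly the five edges $v_iu_i$; as $G$ is cyclically $5$-edge-connected this is a minimum cyclic edge-cut whenever $V(G)\setminus V(C)$ carries a cycle, which holds once $|G|>5$. Because $G$ has no removable edges, each edge $v_iv_{i+1}$ of $C$ must itself lie in a cyclic $5$-edge-cut $F_i$, and $F_i\neq\partial(C)$ since $\partial(C)$ does not contain $v_iv_{i+1}$. I would then compare $F_i$ with $\partial(C)$: two distinct minimum cyclic $5$-edge-cuts are either laminar or crossing, and in the crossing case the submodular inequality $d(X\cap Y)+d(X\cup Y)\le d(X)+d(Y)=10$, together with $d\ge5$ on any cycle-containing side, pins both the intersection and the union down to cut value exactly $5$ and to bounded size. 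This is the mechanism that converts the abstract statement ``$v_iv_{i+1}$ lies in a $5$-cut'' into a concrete pentagon sitting next to $C$.

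With the interaction of cuts understood, the core of the argument is an extension/peeling step. I would show that, after the crossing analysis is applied to all five edges of $C$, the exterior structure exhibits a second $5$-cycle adjacent to $C$ along a prescribed pattern, reproducing exactly one band of a double ladder, i.e.\ rungs $a_ib_{2i}$ and $c_ib_{2i-1}$ alternating along a $b$-rail, so that the ``squares'' of the strip are forced to be pentagons. Iterating, each newly produced $5$-cycle again lies in no rosette and again satisfies the hypotheses, so the construction propagates a new rung at every step. Since $G$ is finite the strip must close up, and I would classify the closures: the generic one re-attaches the free ends by a matching between $\{a_1,b_1,c_1\}$ and the terminal triple, which is precisely the matching in the definition of the odd and even double ladders, giving $G$ a double ladder; the two degenerate closures, where the strip is too short to wrap as a ladder, yield exactly the sporadic graphs $G_1$ and $G_2$.

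The hypothesis that the starting $5$-cycle does \emph{not} belong to a rosette is what I expect to carry the whole argument and to be the main obstacle to make rigorous. Its role is to forbid the alternative way the five exterior edges of $C$ can knit together, namely the flower-type closure in which the adjacent pentagons all share vertices of $C$ rather than marching off into a ladder; ruling this out at \emph{every} stage, not merely at $C$, is the delicate point, since I must argue that no rosette is ever created further along the strip. I would handle this by a finite case analysis of how two crossing cyclic $5$-cuts can meet (there are only a few combinatorial types once both cut values are forced to $5$), showing that every type other than the ladder band either produces a rosette, contradicting the hypothesis, or collapses to $G_1$ or $G_2$ in the smallest cases. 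The bookkeeping of these cases, and verifying that the propagated pentagons stay rosette-free, is where the real work lies; assembling the surviving rungs into the declared double-ladder presentation is then routine.
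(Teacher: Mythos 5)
The first thing to say is that the paper contains no proof of this statement for you to be compared against: Theorem \ref{Aldred91} is quoted verbatim from Aldred, Holton and Jackson \cite{Aldred91} and is used as a black box in the proof of Theorem \ref{maintheorem} (just as Theorems \ref{Schrag}, \ref{Lou91} and \ref{Dong08} are). So your attempt has to be judged as a free-standing reconstruction of the Aldred--Holton--Jackson classification, not against anything in this paper.

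Judged that way, your text is a strategy outline rather than a proof, and the steps you defer are precisely where the content of the theorem lives. Concretely: (i) the uncrossing step is incomplete --- submodularity (Lemma \ref{submodular}) together with ``$d\geq 5$ on any cycle-containing side'' forces all four corners of a crossing pair of cyclic $5$-cuts to have cut value $5$ only when every corner actually contains a cycle; cycle-free corners need a separate counting argument (of the type $d(X)\geq |X|+2$ used inside the proof of Lemma \ref{super cyclically 5-edge-connected}), and even granting that, you never show any corner \emph{is} a pentagon. Moreover, a cyclic $5$-cut $F_i$ through $v_iv_{i+1}$ need not cross $\partial(C)$ at all; the laminar case must also be shown to produce the ladder band, and you do not treat it. (ii) The propagation step --- that each newly produced pentagon again fails to lie in a rosette, so that the hypothesis can be reapplied --- is, as you yourself say, the delicate point, and it is simply not done; without it the iteration stops after one step. (iii) The closure step is also a placeholder: that a finite strip can only close via the specific matchings defining odd/even double ladders, with exactly two sporadic degenerations $G_1$ and $G_2$, is a substantial case analysis (compare how restrictive the admissible matchings in Lemma \ref{Aldred91L} turn out to be), not bookkeeping. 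Finally, a small unjustified claim at the start: in a cubic graph, $|G|>5$ does not by itself give a cycle in $V(G)\setminus V(C)$; one needs the parity/degree/girth count showing $|V(G)\setminus V(C)|\geq 5$. In short, the skeleton is sensible and is in the same spirit as the cited original, but none of the load-bearing steps are established, so this cannot be accepted as a proof of the statement.
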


 \begin{figure}[!htbp]
\begin{center}
\includegraphics[totalheight=4.3cm]{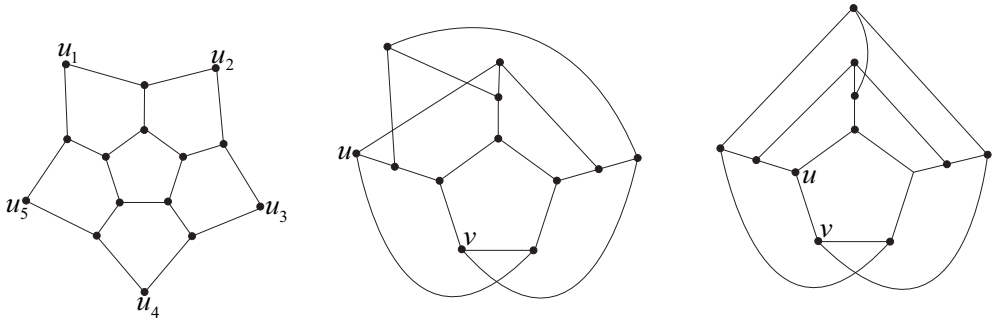}
 \caption{\label{rossete}The  rosette (left), $G_{1}$ (middle) and $G_{2}$ (right).}
\end{center}
\end{figure}

As we see, for a $U(5)$ graph, if a resette is excluded, then its structure is clear. Hence we first characterize a $U(5)$ and vertex-transitive graph containing a resette as a subgraph. 

\begin{Lemma}\label{dodecahedron}
Let $G$ be a vertex-transitive graph of girth 5 which is also in $U(5)$. If it contains a resette as a subgraph, then it is isomorphic to the dodecahedron.
\end{Lemma}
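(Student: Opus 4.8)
The plan is to read off, from the rosette, the five edges that leave it, and to show by a pentagon count that these edges must close up into a disjoint pentagon, so that $G$ has exactly $20$ vertices and is the dodecahedron. Write the rosette $R$ as its central pentagon $C_{0}=v_{1}v_{2}v_{3}v_{4}v_{5}$ together with its five petal pentagons; the only vertices of $R$ that are not already cubic are the five ``tips'' $x_{1},\dots ,x_{5}$, where $x_{i}$ has degree two in $R$ and is adjacent there to the two vertices $w_{i},w_{i+1}$ shared by neighbouring petals (indices mod $5$). Since every other vertex of $R$ is already cubic, $R$ is an induced subgraph and the only edges meeting $V(R)$ from outside are the five edges $x_{i}y_{i}$, one at each tip. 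The goal becomes: the external neighbours $y_{1},\dots ,y_{5}$ are five distinct vertices forming a pentagon, and they comprise all of $V(G)\setminus V(R)$.

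First I would exploit vertex-transitivity through a pentagon count. As $G$ is vertex-transitive of girth five, every vertex lies in the same number $t$ of $5$-cycles. Evaluating $t$ at the saturated vertex $v_{1}$, whose neighbours all lie in $R$, one checks that $v_{1}$ lies in exactly three pentagons --- the central one and its two petals --- so $t=3$. Each tip $x_{i}$, on the other hand, lies in exactly one pentagon inside $R$ (its own petal), hence in exactly two further $5$-cycles, and each of these must use the external edge $x_{i}y_{i}$. Because the remaining edges at $x_{i}$ are only $x_{i}w_{i}$ and $x_{i}w_{i+1}$, such a $5$-cycle uses one of them; tracing the length-three path from $y_{i}$ back to $w_{i}$ (respectively $w_{i+1}$) inside the girth-five graph, the sole admissible cycles are $x_{i}\,y_{i}\,y_{i-1}\,x_{i-1}\,w_{i}$ and $x_{i}\,y_{i}\,y_{i+1}\,x_{i+1}\,w_{i+1}$. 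The first exists exactly when $y_{i-1}\sim y_{i}$ and the second exactly when $y_{i}\sim y_{i+1}$ (with the relevant $y$'s distinct). Since $x_{i}$ needs both, these adjacencies hold for every $i$.

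Consequently $y_{1},\dots ,y_{5}$ are pairwise distinct and satisfy $y_{i}\sim y_{i+1}$ cyclically, so they induce a pentagon; as each $y_{i}$ is now cubic (adjacent to $x_{i}$, $y_{i-1}$, $y_{i+1}$), nothing else lies outside $R$, whence $|G|=20$ and the adjacency pattern is precisely that of the dodecahedron. I expect the genuinely delicate step to be the local $5$-cycle analysis of the previous paragraph: one must in particular rule out that an external edge joins two tips of $R$ directly, or that several tips share a common small external neighbourhood. These are excluded by girth five --- for instance $x_{i}$ and $x_{i+1}$ already share the neighbour $w_{i+1}$, so a common external neighbour would close a quadrilateral --- reinforced by the exact value $t=3$. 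A cleaner packaging of the same conclusion, which I would use as a cross-check, is that once the five edges are known to cross into a complement that still contains a cycle, super cyclic $5$-edge-connectivity (Lemma \ref{super cyclically 5-edge-connected}) forces that complement to be a single shortest cycle, i.e.\ exactly the pentagon $y_{1}\cdots y_{5}$; the final identification of the two hemispheres with the dodecahedron is then routine.
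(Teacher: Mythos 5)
Your plan---count $5$-cycles per vertex by vertex-transitivity, get $t=3$ at a saturated central vertex, and force $y_{i-1}\sim y_i\sim y_{i+1}$ at each tip---is attractive, but it has one genuine gap: the sentence ``Since every other vertex of $R$ is already cubic, $R$ is an induced subgraph'' is a non sequitur. Cubicity of the non-tip vertices only shows that any chord of $R$ (an edge of $G$ joining two vertices of $R$ that is not an edge of $R$) must join two \emph{tips}. A chord between consecutive tips $x_ix_{i+1}$ is indeed excluded by girth five, since they share the neighbour $w_{i+1}$; but a chord between non-consecutive tips, say $x_1x_3$, creates only the $5$-cycle $x_1w_2x_2w_3x_3$ and is therefore perfectly compatible with girth five. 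Everything downstream in your proof uses induced-ness: the claim that each tip lies in exactly one pentagon inside $R$, the claim that its two remaining pentagons must use an external edge, and the identification $N(x_{i-1})\setminus\{w_i\}=\{w_{i-1},y_{i-1}\}$ all fail if such a chord is present. (By contrast, your computation $t=3$ at a central vertex is correct and is unaffected by possible chords, since every candidate path there passes through vertices that are already cubic inside $R$.)

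This is exactly what the paper's Claim~1 is for, and it needs connectivity, not just girth: if $R$ has a chord, then at most three edges leave $V(R)$, while $V(G)\setminus V(R)\neq\emptyset$ because $|R|=15$ is odd and $|G|$ is even; Theorem~\ref{mader} forces $d(R)=3$, Corollary~\ref{notriangletrivial} makes this $3$-edge-cut trivial, so the outside is a single vertex adjacent to three of the five tips, two of which are cyclically consecutive and share a neighbour inside $R$---a quadrangle, contradicting girth five. (Alternatively, you could repair it inside your own framework: a chord saturates two tips, at most two chords can exist, and a short case analysis shows that some tip then lies in at most two $5$-cycles, contradicting $t=3$; but this analysis is absent from your write-up.) Once induced-ness is secured, the rest of your argument is correct and is a genuinely different route from the paper's: the paper identifies the outside by showing $\partial(R)$ is a cyclic $5$-edge-cut and invoking Lemma~\ref{super cyclically 5-edge-connected} to get a pentagon, and then needs separate second-neighbourhood isomorphism arguments to prove the external neighbours are distinct and attached without a twist, whereas your pentagon count delivers the distinctness of $y_1,\dots,y_5$ and the exact attachment pattern in one stroke.
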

\begin{proof}
Let $H$ be a rosette in $G$ and $u_{1}, u_{2}, u_{3}, u_{4}$ and $u_{5}$ be the five vertices of degree two in $H$ (see Figure \ref{rossete} (left)).  Since $|H|$ is odd and $|G|$ is even, $\overline{V(H)}\neq \emptyset$. It follows that $\partial(H)$ is an edge cut. Moreover, $d(H)\geq 3$ by Theorem \ref{mader}. 

\vskip 0.1cm
\smallskip

\noindent  {\bf{Claim 1.}}  $H$ is an induced subgraph. 

\vskip 0.1cm
\smallskip

Suppose not. Then $d(H)\leq 3$. Furthermore, we have $d(H)=3$ by Theorem \ref{mader}. Moreover, by Corollary \ref{notriangletrivial}, $\overline{V(H)}$ contains a single vertex, denoted by $w$. $w$ is of degree three and it is adjacent to three of  $u_{1}, u_{2}, u_{3}, u_{4}$ and $u_{5}$. Therefore, it is adjacent to  $u_{i}$ and $u_{i+1}$ for some $i$, here and hereafter in the proof of this lemma, the subscripts are taken modulo 5. Then we obtain a quadrangle, a contradiction. This completes the proof of the claim.

\vskip 0.1cm
\smallskip

Let $v_{1}$, $v_{2}$, $v_{3}$, $v_{4}$ and $v_{5}$ be the neighbors of  $u_{1}, u_{2}, u_{3}, u_{4}$ and $u_{5}$ in $V(G)\setminus V(H)$ respectively.  

\vskip 0.1cm
\smallskip

\noindent  {\bf{Claim 2.}}  All $v_{i}'s$ are different from each other.

\vskip 0.1cm
\smallskip

Since $G$ is of girth five, $v_{i}\neq v_{i+1}$, $1\leq i \leq 5$. It follows that only two of them could be the same. Otherwise, we will find some $i$ ($1\leq i \leq 5$) with $v_{i}=v_{i+1}$. Suppose that  $v_{i}=v_{j}$ with $|i-j|\neq 1$. Then by symmetry, we may suppose that $v_{1}=v_{3}$.  Since $G$ is vertex-transitive, $G[N_{2}(x)]$ should be isomorphic to $G[N_{2}(y)]$, where $x$ and $y$ are shown in Figure \ref{rosettestructure} (left), and     $N_{2}(v)$ denotes the set of vertices of distance two with $v$ in $G$.  $G[N_{2}(x)]$ is a matching of size three, so is $G[N_{2}(y)]$. Hence  we obtain that $v_{3}$ is adjacent to $v_{5}$. Let $S=V(H)\cup \{v_{1}, v_{5}\}$. Then $d(S)=3$. By Theorem \ref{superlambda}, $\partial(S)$ isolates a singleton; that is, the three degree-2 vertices in $G[S]$ is connected to a common vertex $v_{2}=v_{4}$.  Now $G$ is known, we can check that $G[N_{2}(x)]$ is not isomorphic to $G[N_{2}(z)]$, please see Figure \ref{rosettestructure} (left), a contradiction.

\vskip 0.1cm
\smallskip

Since $d(H)=5$ and $|\overline{V(H)}|\geq 5$, we can easily check that $G[\overline{V(H)}]$ contains a cycle and further $\partial(H)$ is a cyclic 5-edge-cut. By Lemma \ref{super cyclically 5-edge-connected}, $G[\overline{V(H)}]$ is a cycle of length five. Hence we have $\overline{V(H)}=\{v_{1}, v_{2}, v_{3}, v_{4}, v_{5}\}$.
We will show that $v_{i}$ can only be adjacent to $v_{i+1}$ and $v_{i-1}$ in $G[\overline{V(H)}]$. Suppose not. By symmetry, we may suppose that $v_{2}v_{4}\in E(G)$. Then $v_{2}v_{1}\notin E(G)$ or $v_{2}v_{3}\notin E(G)$. If $v_{2}v_{1}\notin E(G)$, then $G[N_{2}(w)]$ is not isomorphic to $G[N_{2}(u)]$, a contradiction; If $v_{2}v_{3}\notin E(G)$, then $G[N_{2}(v)]$ is not isomorphic to $G[N_{2}(u)]$, a contradiction, too; Please see Figure Figure \ref{rosettestructure} (middle).

\begin{figure}[!htbp]
\begin{center}
\includegraphics[totalheight=4.5cm]{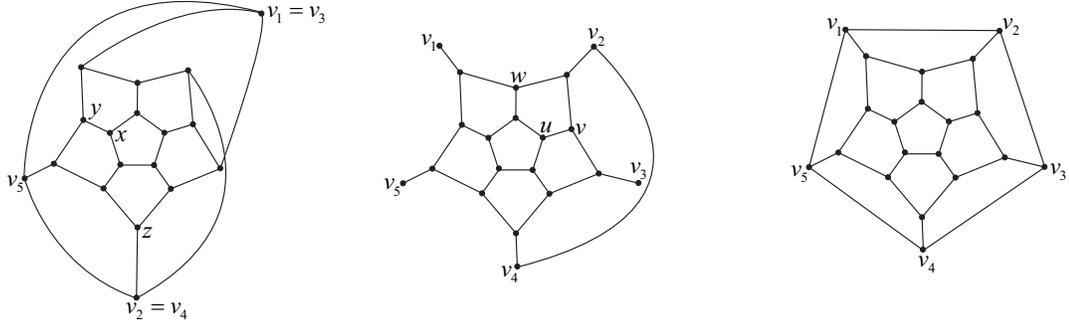}
 \caption{\label{rosettestructure}The  illustration for the proof.}
\end{center}
\end{figure}
By the above arguments, $v_{i}$'s are all different from each other and $v_{i}$ is adjacent to $v_{i+1}$ and $v_{i-1}$ in $G[\overline{V(H)}]$.  Thus we obtain that $G$ is isomorphic to the dodecahedron (see Figure \ref{rosettestructure} (right)), we are done.
\end{proof}

In \cite{Aldred91}, that when a double ladder is $U(5)$ is also characterized.

\begin{Lemma}[\cite{Aldred91}]\label{Aldred91L}
An odd double ladder of length $k$ is $U(5)$ if and only if $k\geq
2$ and the matching is $\{(a_{1}c_{k+1}), (b_{1}b_{2k+1}),
(c_{1}a_{k})\}$, or $k=3$ and the matching is $\{(a_{1}b_{7}),
(b_{1}c_{4}), (c_{1}a_{3})\}$ or $\{(a_{3}b_{1}),$ $(b_{7}c_{1}),
(c_{4}a_{1})\}$, or $k=4$ and the matching is $\{(a_{1}b_{7}),
(b_{1}a_{4}), (c_{1}c_{5})\}$. An even double ladder of length $k$
is $U(5)$ if and only if $k\geq 5$ and the matching is
$\{(a_{1}a_{k}), (b_{1}b_{2k}), (c_{1}c_{k})\}$.
\end{Lemma}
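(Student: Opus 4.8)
The plan is to use the fact that a double ladder is, away from its closing matching, a cylindrical tiling by pentagons, so that its cyclic $5$-edge-cuts fall into one predictable family in the interior and the whole statement reduces to a finite analysis of how the matching closes the cylinder. Writing the middle path as $b_{1},b_{2},\dots$, each $c_{i}$ is attached at the odd position $b_{2i-1}$ and each $a_{i}$ at the even position $b_{2i}$; a direct check then shows that the two families of pentagons $P^{a}_{i}=\{a_{i},a_{i+1},b_{2i},b_{2i+1},b_{2i+2}\}$ and $P^{c}_{i}=\{c_{i},c_{i+1},b_{2i-1},b_{2i},b_{2i+1}\}$ satisfy $d(P^{a}_{i})=d(P^{c}_{i})=5$. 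For $k$ not too small the complement of each such pentagon still contains a cycle, so $\partial(P^{a}_{i})$ and $\partial(P^{c}_{i})$ are cyclic $5$-edge-cuts, and as $i$ ranges over the interior their boundaries jointly cover every edge of the $a$-, $b$- and $c$-paths together with every rung. Hence every interior edge is automatically non-removable and the girth is $5$ unless the matching creates a shorter cycle. This isolates the genuine content of the lemma in the behaviour at the seam gluing $\{a_{1},b_{1},c_{1}\}$ to $\{a_{k},b_{2k+1},c_{k+1}\}$ (odd case) or to $\{a_{k},b_{2k},c_{k}\}$ (even case).

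I would then analyse, for each of the at most $3!=6$ matchings, whether a cycle-separating edge-cut of size less than $5$ (or a cycle shorter than $5$) is created, and whether the pentagonal tiling closes up consistently across the seam so that the three matching edges and their neighbours also lie in cyclic $5$-edge-cuts. The decisive point is a parity observation: the middle path of an odd double ladder has odd length while that of an even double ladder has even length, so the $a$- and $c$-attachments meet the two ends of the seam with opposite alignments in the two cases. Consequently the tiling continues across the seam only for the twisted matching $\{(a_{1}c_{k+1}),(b_{1}b_{2k+1}),(c_{1}a_{k})\}$ in the odd case and only for the straight matching $\{(a_{1}a_{k}),(b_{1}b_{2k}),(c_{1}c_{k})\}$ in the even case. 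For these two matchings I would exhibit, for each seam edge, an explicit pentagon straddling the seam that produces a cyclic $5$-edge-cut through it, and separately verify that no cyclic cut of size at most $4$ exists, which gives $U(5)$; submodularity of $d$ (Lemma \ref{submodular}) is convenient here for ruling out small cuts. For every remaining matching I would produce a witness that $U(5)$ fails: either a triangle or quadrilateral, or a cyclic $3$- or $4$-edge-cut formed at the seam, or else a seam edge lying in no cyclic $5$-edge-cut. The lower bounds $k\geq 2$ (odd) and $k\geq 5$ (even) then drop out by discarding the small graphs, which are either not cyclically separable into two cycles or carry short cycles.

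The main obstacle, and the part that escapes the uniform pentagon argument, will be the sporadic small-$k$ odd ladders singled out in the statement: for $k=3$ the matchings $\{(a_{1}b_{7}),(b_{1}c_{4}),(c_{1}a_{3})\}$ and $\{(a_{3}b_{1}),(b_{7}c_{1}),(c_{4}a_{1})\}$, and for $k=4$ the matching $\{(a_{1}b_{7}),(b_{1}a_{4}),(c_{1}c_{5})\}$. These do not come from the generic twisted closure, so each must be treated by hand: one verifies directly that the graph is cyclically $5$-edge-connected of girth $5$ and that each of the few edges not covered by an interior pentagon nevertheless lies in some cyclic $5$-edge-cut, while also confirming that no further sporadic matching survives this test. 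I expect this finite but delicate enumeration, distinguishing genuine $U(5)$ closures from near-misses that leave a single removable edge, to be where essentially all the effort of the proof is concentrated.
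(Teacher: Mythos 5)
This lemma is not proved in the paper at all: it is stated with the citation tag \cite{Aldred91} and used as a black box (in the proof of Theorem \ref{doubleladderextendable}); the actual proof lives in Aldred, Holton and Jackson's Combinatorica paper. So there is no in-paper argument to compare yours against, and the only question is whether your sketch would itself constitute a proof.

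Your skeleton is the natural one and its first stage is sound: the pentagons $P^{a}_{i}$ and $P^{c}_{i}$ do tile the cylinder, their boundaries are cyclic $5$-edge-cuts once the complement contains a cycle, these boundaries cover every interior edge, and the twisted closure $\{(a_{1}c_{k+1}),(b_{1}b_{2k+1}),(c_{1}a_{k})\}$ does continue the tiling across the seam (consistent with the paper's later identification of this graph with the generalized Petersen graph $G(2k+1,2)$). The genuine gaps are concentrated exactly where the content of the lemma lies. First, to show a matching fails $U(5)$ when the resulting graph is still cyclically $5$-edge-connected of girth $5$ --- which is the situation for the ``wrong'' closures once $k$ is large --- you must prove that some edge lies in \emph{no} cyclic $5$-edge-cut. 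That is a universally quantified claim over all cyclic $5$-edge-cuts; observing that the pentagon boundaries miss a seam edge proves nothing, because non-pentagonal cyclic $5$-edge-cuts exist (the sporadic $k=3$ and $k=4$ graphs in the statement are $U(5)$ precisely because of such cuts). Your sketch offers no mechanism, such as a classification of the minimum cyclic cuts or cyclic edge-atoms of a double ladder, for establishing removability, and it lists ``a seam edge lying in no cyclic $5$-edge-cut'' as if it were an exhibitable witness on a par with a quadrilateral, which it is not. Second, the ``only if'' direction quantifies over every matching and every $k$, so it cannot be disposed of by ``a finite but delicate enumeration'': the sporadic cases are finite, but excluding the remaining closures for arbitrary $k$ requires a uniform argument, and your parity observation (``the tiling does not continue across the seam'') does not by itself imply that any edge is removable. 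Third, even the positive direction for the surviving closures --- exhibiting straddling pentagons through each seam edge and ruling out cyclic cuts of size at most $4$ --- is only announced, not carried out. In short, the proposal isolates the right structures but defers every step that actually requires proof.
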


In the following, we will show the 2-extendability of a double ladder which is also $U(5)$ and vertex-transitive. The 2-extendabilities of generalized Petersen graph and a kind of planar graphs are needed.

\begin{defn}
The generalized Petersen graph $GP(n,k), n \geq 3, 1\leq k <
\frac{n}{2}$ is a cubic graph with vertex set $\{u_{i}, i \in
Z_{n}\} \cup \{v_{i}, i \in Z_{n}\}$, and edge set $\{u_{i}u_{i+1},
u_{i}v_{i}, v_{i}v_{i+k}; \\i\in Z_{n}\}$. 
\end{defn}

\begin{thm}[\cite{Schrag}]\label{Schrag}
$G(n,2)$ is 2-extendable if and only if $n\neq 5, 6, 8$.
\end{thm}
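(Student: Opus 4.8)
The plan is to reduce the statement to a vertex-deletion condition and then separate the three exceptional values from the generic ones. Since the set of all spokes $\{u_iv_i : i\in Z_n\}$ is a perfect matching, $GP(n,2)$ has even order $2n$ and is $1$-extendable; by the standard reformulation, $GP(n,2)$ is $2$-extendable if and only if, for every pair of independent edges $e,f$, the graph $GP(n,2)-V(\{e,f\})$ obtained by deleting their four endpoints has a perfect matching. Thus the whole problem becomes: given four vertices forming two independent edges, find a perfect matching of the remaining graph.

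To control the number of configurations I would exploit the dihedral symmetry of $GP(n,2)$: the rotation $\rho:u_i\mapsto u_{i+1},\,v_i\mapsto v_{i+1}$ and the reflection $\sigma:u_i\mapsto u_{-i},\,v_i\mapsto v_{-i}$ are both automorphisms. Using $\rho$ I may place the first edge $e$ in one of three canonical positions according to its type, namely an outer edge $u_0u_1$, a spoke $u_0v_0$, or an inner edge $v_0v_2$, and then record the second edge $f$ by its type together with its cyclic displacement $j$; applying $\sigma$ halves the remaining displacements. This leaves, for each fixed parity of $n$, only a bounded family of \emph{local} configurations (small $j$) together with a uniform \emph{far-apart} configuration (all large $j$ behaving alike).

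Next I would dispose of the exceptions $n=5,6,8$. For $n=5$ the graph is the Petersen graph, which is $1$-extendable but not $2$-extendable, so it suffices to exhibit two independent edges whose four endpoints cannot be avoided by any perfect matching. For $n=6$ the inner edges $v_iv_{i+2}$ form the two triangles $v_0v_2v_4$ and $v_1v_3v_5$, and for $n=8$ they form the two quadrilaterals $v_0v_2v_4v_6$ and $v_1v_3v_5v_7$; in each case I would choose $e,f$ so that deleting their endpoints leaves an odd component, an odd-component (Tutte) obstruction certifying non-extendability.

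For the generic range $n=7$ and $n\ge 9$ the plan is constructive, and I would split on the parity of $n$, since the inner edges form a single $n$-cycle when $n$ is odd but two $(n/2)$-cycles when $n$ is even. Starting from the all-spokes matching, I would show that when $e$ and $f$ are cyclically far apart one can patch the matching locally around $e$ and around $f$ (trading a few spokes for the incident outer and inner edges) while keeping the rest all-spokes, the two patches not interfering because the displacement is large; the finitely many close configurations I would settle by writing down the patch explicitly. The principle is that once $n$ is large enough there is always room to route the matching past the two deleted edges and close it up consistently around the inner cycle(s), whereas for $n=5,6,8$ the wrap-around forces precisely the obstruction found above. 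I expect this constructive step to be the main obstacle: one must guarantee that every local patch extends to a global perfect matching without creating a parity conflict in the inner cycle(s), and the boundary values $n=7,9,10,11,12$ must be verified by hand, since this is exactly where the small exceptional cases break down and where the even/odd dichotomy of the inner structure has to be handled with care.
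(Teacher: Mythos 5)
You should first be aware that the paper contains no proof of this statement at all: Theorem~\ref{Schrag} is imported verbatim from the preprint of Schrag and Cammack (reference \cite{Schrag}) and is used as a black box, e.g.\ in the proof of Theorem~\ref{doubleladderextendable} to conclude that the odd double ladders $G(2k+1,2)$ are 2-extendable except for the Petersen graph. So there is no ``paper's proof'' to compare against; your attempt has to stand on its own. Its setup is sound: 2-extendability of $GP(n,2)$ is, directly by definition, equivalent to the statement that deleting the four endpoints of any two independent edges leaves a graph with a perfect matching (note, though, that your intermediate claim ``has a perfect matching, hence is 1-extendable'' is a non sequitur --- $P_4$ has a perfect matching but is not 1-extendable --- it is simply not needed), and the dihedral symmetry reduction to canonical positions is legitimate. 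The exceptional cases also work out as you predict: for $n=5$ delete the endpoints of $u_0u_1$ and $v_2v_4$, and then $v_0$ and $v_1$ both have $v_3$ as their unique remaining neighbour; for $n=8$ delete the endpoints of $u_0u_1$ and $u_3u_4$, forcing $u_2v_2$, after which $v_0$ and $v_4$ both have $v_6$ as their unique remaining neighbour; a similar isolated-vertex obstruction works for $n=6$ (e.g.\ $u_0u_1$ and $v_3v_5$ isolate $v_1$).

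The genuine gap is that the sufficiency direction --- which is the entire content of the theorem --- is never actually argued; it is only announced as a plan, and you yourself flag the key step as ``the main obstacle.'' For $n=7$ and $n\geq 9$ one must handle all six unordered type-pairs of edges (outer--outer, outer--spoke, outer--inner, spoke--spoke, spoke--inner, inner--inner), for every cyclic displacement and for both parities of $n$, and the difficulty is exactly the one you name but do not resolve: a local repair of the all-spokes matching near a deleted outer edge forces inner vertices onto inner edges, which propagates around the inner $n$-cycle (odd $n$) or the two $(n/2)$-cycles (even $n$) and must close up with the correct parity; whether it does depends on the displacement between the two deleted edges, which is precisely why small values such as $6$ and $8$ fail while large ones succeed. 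Until that case analysis (or an equivalent inductive/structural argument) is written out and shown to terminate for every configuration, what you have is a correct reduction plus verified necessity, but not a proof of the theorem; the hard combinatorial core remains open in your write-up.
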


\begin{thm}[\cite{Lou91}]\label{Lou91}
If $G$ is a cubic, 3-connected, planar graph and, in addition, is
cyclically 5-edge-connected, then $G$ is 2-extendable.
\end{thm}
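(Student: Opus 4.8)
The plan is to argue by contradiction through a Tutte-type barrier and then extract a forbidden cyclic edge-cut from planarity together with the girth condition. First I would record the easy structural consequences of the hypotheses: a cubic $3$-connected graph is $3$-edge-connected, so by Petersen's theorem $G$ has a perfect matching and is in fact matching-covered (hence $1$-extendable); and since any shortest cycle whose complement still carries a cycle yields a cyclic edge-cut of size equal to its length, cyclic $5$-edge-connectivity forces girth$(G)\ge 5$. Fix a $2$-matching $M=\{e_1,e_2\}$ with $W=V(e_1)\cup V(e_2)$, $|W|=4$; proving $2$-extendability amounts to showing that $H:=G-W$ has a perfect matching, and $|H|$ is even.

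Suppose $H$ has no perfect matching. By Tutte's theorem together with the parity of $|H|$, there is a set $S\subseteq V(H)$ with $o(H-S)\ge |S|+2$. Put $T=S\cup W$, so the odd components $C_1,\dots,C_m$ of $H-S$ are exactly the odd components of $G-T$ and $m\ge |S|+2$. For each $C_i$ the cubic identity gives $d_G(C_i)=3|C_i|-2\,\|G[C_i]\|$, an odd number; $3$-edge-connectivity rules out $d_G(C_i)=1$, and the girth/cyclic hypotheses force $d_G(C_i)\ge 5$ whenever $C_i$ is not a single vertex (a tree on $\ge 3$ vertices already satisfies $d_G(C_i)=|C_i|+2\ge 5$, while a component carrying a cycle has $\partial(C_i)$ a cyclic edge-cut, hence $d_G(C_i)\ge c\lambda(G)\ge 5$). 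Comparing $\sum_i d_G(C_i)\le 3|T|=3|S|+12$ with $m\ge |S|+2$ then shows that all but a bounded number of the $C_i$ are singletons.

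The mechanism is cleanest when $S=\varnothing$: then every odd component of $G-W$ sends all its boundary edges into the $4$-set $W$, and since at most $8$ edges leave $W$, at most one such component can carry a cycle; the remaining odd components are singletons with $N(v)\subseteq W$, and any two of them share at least $3+3-4=2$ neighbours in $W$, producing a $4$-cycle and contradicting girth$\,\ge 5$. For general $S$ I would globalize this ``two singletons force a short cycle'' idea: the singleton components are joined to $T$ by a planar bipartite graph which, since girth$(G)\ge 5$ forbids $4$-cycles, has girth $\ge 6$, and I would combine Euler's inequality $\|\cdot\|\le \tfrac32(|V|-2)$ for such graphs with the degree equation above and the bounded interface with $W$ to violate the count $m\ge |S|+2$.

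The main obstacle is precisely this last global count. The naive singleton estimate from Euler's formula only yields $t\le |S|+O(1)$ for the number $t$ of singleton components, which is consistent with the lower bound $t\ge |S|-1$ coming from the degree comparison; so a genuine contradiction requires simultaneously accounting for the non-singleton odd components, the even components, and the edge-multiplicities created when one contracts each component of $G-T$ to a point, and then discharging so that cyclic $5$-edge-connectivity is contradicted by an actual cyclic cut of size $<5$. I expect this to be the delicate part, and I would organize it as a discharging argument on the planar quotient graph (or, alternatively, reduce to the brick/ear-decomposition machinery for matching-covered graphs), arranging matters so that the girth-$5$ and cyclic-$5$ hypotheses are invoked exactly where the uniform bound $d_G(C_i)\ge 5$ is tight.
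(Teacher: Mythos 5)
The paper never proves this statement: Theorem~\ref{Lou91} is quoted from Holton, Lou and Plummer \cite{Lou91} and used as a black box in the proof of Theorem~\ref{doubleladderextendable}, so your attempt can only be judged on its own completeness. Judged that way, it has a genuine gap, and it is exactly the one you flag yourself: the ``last global count'' is not a delicate finishing touch, it \emph{is} the theorem, and you leave it as a plan rather than an argument. Everything you actually execute --- the Tutte set $S$ for $H=G-W$, the set $T=S\cup W$, the parity facts, $d_G(C)=3$ for singletons, $d_G(C)\ge 5$ for non-singleton odd components, the comparison of $\sum_i d_G(C_i)$ against $3|T|$ and $m\ge |S|+2$, and the ``two singletons force a $4$-cycle'' observation --- uses only $3$-edge-connectivity, girth $\ge 5$ and cyclic $5$-edge-connectivity, never planarity. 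No argument of that kind can close the proof, because the Petersen graph satisfies all of those hypotheses and is not $2$-extendable. Concretely, with outer cycle $u_1\cdots u_5$, spokes $u_iv_i$ and inner edges $v_iv_{i+2}$, the $2$-matching $e_1=u_1v_1$, $e_2=v_2v_5$ does not extend; there one may take $S=\{u_3,u_4\}$, all four components of $G-T$ are singletons, $\sum_i d_G(C_i)=12=3|T|-6$, and every pair of singletons has exactly one common neighbour, so no $4$-cycle arises. The Petersen graph thus passes every step you wrote down, which shows that the deferred Euler/discharging argument on the planar quotient graph (with its multi-edges, its even components and its non-singleton odd components) is where the whole proof must live; until it is supplied, there is no proof.

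Two smaller slips in the portions you did write. First, in the $S=\varnothing$ case the admissible configuration is one singleton plus one odd component containing a cycle ($3+5\le 8$), and your ``two singletons'' argument says nothing about it; what actually works is that a singleton of $G-W$ has all three neighbours inside $W$, hence is adjacent to both ends of $e_1$ or of $e_2$, creating a triangle against girth $\ge 5$ --- after which no singletons exist and two non-singleton odd components would need at least $10>8$ boundary edges. Second, $\partial(C_i)$ is a cyclic edge-cut only after one checks that $G-V(C_i)$ also contains a cycle; this requires a short counting argument (as the paper itself carries out in the proof of its Theorem~\ref{maintheorem}) and is not automatic.
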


\begin{thm}\label{doubleladderextendable}
Let $G$ be a double ladder. If $G$ is $U(5)$ and vertex-transitive, then it is 2-extendable except for the Petersen graph.
\end{thm}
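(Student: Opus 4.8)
The plan is to combine Lemma~\ref{Aldred91L}, which lists all $U(5)$ double ladders, with an explicit identification of the generic families as generalized Petersen graphs, and then to appeal to the extendability results already available. First I would use Lemma~\ref{Aldred91L} to reduce to a short list: the generic odd double ladders (length $k\ge 2$, cyclic matching), the generic even double ladders (length $k\ge 5$, cyclic matching), together with the three sporadic odd cases at $k=3$ and $k=4$.

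The key structural observation is that each generic family is a generalized Petersen graph. For the odd family I would merge the $a$- and $c$-rails into a single $(2k+1)$-cycle and keep the $b$-rail as a second $(2k+1)$-cycle; the spokes $a_i b_{2i}$ and $c_i b_{2i-1}$ then exhibit the graph as $GP(2k+1,k)$, and since $2k\equiv -1\pmod{2k+1}$ forces $k^{-1}\equiv -2$, this is isomorphic to $GP(2k+1,2)$. For the even family the two $a$- and $c$-cycles of length $k$ are attached to the even and odd vertices of the $2k$-cycle $b$ respectively, which is exactly $GP(2k,2)$. In particular the odd family at $k=2$ is $GP(5,2)$, the Petersen graph, and the even family at $k=5$ is $GP(10,2)$, the dodecahedron.

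Next I would decide vertex-transitivity through a local invariant. Counting the shortest ($5$-)cycles through a vertex of $GP(n,2)$, an outer-cycle vertex lies on three of them while an inner-cycle vertex lies on only two, provided $n$ is large enough that no $5$-cycle wraps around, namely $n\ge 7$ in the odd case and $n\ge 12$ in the even case, where the inner part is a single long cycle or two cycles of length $\ge 6$. Hence $GP(n,2)$ is not vertex-transitive in these ranges, leaving only $GP(5,2)$ and $GP(10,2)$. The three sporadic double ladders are finite graphs whose \emph{crossed} matchings place the degree-two rail-ends into inequivalent cyclic environments, so the same count of girth cycles is not constant over their vertices and they are not vertex-transitive either.

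Finally, the classification reduces the theorem to two graphs. The dodecahedron $GP(10,2)$ is $2$-extendable by Theorem~\ref{Schrag} since $10\notin\{5,6,8\}$; equivalently, being cubic, $3$-connected, planar and cyclically $5$-edge-connected, it is $2$-extendable by Theorem~\ref{Lou91}. The Petersen graph $GP(5,2)$ is precisely the excluded value $n=5$ of Theorem~\ref{Schrag} and fails to be $2$-extendable, which yields the stated exception. I expect the vertex-transitivity step to be the main obstacle: pinning down the isomorphisms with $GP(n,2)$ demands careful index bookkeeping, and excluding vertex-transitivity uniformly for all large $n$ and for the sporadic cases requires an invariant (the number of girth cycles incident to a vertex) that behaves consistently across the whole list rather than a separate ad hoc argument for each graph.
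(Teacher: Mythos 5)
Your proposal is correct, and its logical architecture differs from the paper's in an interesting way. You share the paper's two key moves: the reduction via Lemma~\ref{Aldred91L}, and the identification of the generic odd family with $GP(2k+1,2)$ followed by Theorem~\ref{Schrag} (the paper gets there by the direct relabeling $c_i\mapsto u_{2i-1}$, $a_i\mapsto u_{2i}$, whereas you pass through $GP(2k+1,k)$ and invoke the inner/outer-swap isomorphism $GP(n,k)\cong GP(n,l)$ for $kl\equiv\pm 1\pmod n$ --- fine, but you should either cite this criterion or use the one-step relabeling). Where you genuinely diverge is in the treatment of the generic even family and in the overall strategy: you identify the even family with $GP(2k,2)$ and then \emph{classify vertex-transitivity}, proving via the $5$-cycle count (three through an outer vertex, two through an inner vertex, for $n\ge 7$ odd and $n\ge 12$ even) that only $GP(5,2)$ and $GP(10,2)$ survive, so the whole theorem collapses to checking the Petersen graph and the dodecahedron. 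The paper never decides vertex-transitivity of the generic families at all: it proves that \emph{every} generic $U(5)$ double ladder except the Petersen graph is $2$-extendable --- the odd ones by Theorem~\ref{Schrag}, the even ones by observing they are cubic, $3$-connected, planar and cyclically $5$-edge-connected and invoking Theorem~\ref{Lou91} --- and uses the vertex-transitivity hypothesis only to discard the three sporadic crossed-matching ladders (by comparing second neighborhoods $G[N_2(a_1)]$ and $G[N_2(a_2)]$, a finite check of the same flavor as your cycle count). The trade-off: your route needs only Theorem~\ref{Schrag} on the extendability side and yields the sharper byproduct that the Petersen graph and the dodecahedron are the only vertex-transitive $U(5)$ double ladders (in effect reproving the $k=2$ case of the Frucht--Graver--Watkins classification), but it makes you establish non-transitivity for an infinite family, which is the delicate part --- your boundary bookkeeping ($n\ge 7$ odd, $n\ge 12$ even, with $GP(10,2)$ correctly escaping because its inner $5$-cycles equalize the count) is right and is essential; the paper's route proves a slightly stronger extendability statement and avoids that analysis entirely, at the price of importing the planarity theorem.
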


\begin{proof}
If an odd double ladder of
length $k$ is $U(5)$, then by Lemma \ref{Aldred91L}, it is an odd double ladder with $k\geq
2$ and the matching is $\{(a_{1}c_{k+1}), (b_{1}b_{2k+1}),
(c_{1}a_{k})\}$, or $k=3$ and the matching is $\{(a_{1}b_{7}),
(b_{1}c_{4}), (c_{1}a_{3})\}$ or $\{(a_{3}b_{1}), (b_{7}c_{1}),
(c_{4}a_{1})\}$, or $k=4$ and the matching is $\{(a_{1}b_{7}),
(b_{1}a_{4}), (c_{1}c_{5})\}$.  For an odd double ladder of length 3  and the matching is $\{(a_{1}b_{7}), (b_{1}c_{4}), (c_{1}a_{3})\}$ or $\{(a_{3}b_{1}), (b_{7}c_{1}),
(c_{4}a_{1})\}$, or of length 4 and the matching is $\{(a_{1}b_{7}), (b_{1}a_{4}), (c_{1}c_{5})\}$, we can easily check that $G[N_{2}(a_{1})]$ is not isomorphic to $G[N_{2}(a_{2})]$; that is, they are not vertex-transitive graphs. Hence  an odd double ladder which is also $U(5)$ and vertex-transitive  is an odd double ladder with $k\geq
2$ and the matching is $\{(a_{1}c_{k+1}), (b_{1}b_{2k+1}), (c_{1}a_{k})\}$.  If we make a mapping from $c_{i}$ to $u_{2i-1}$ with $1\leq i \leq k+1$ and from $a_{i}$ to $u_{2i}$ with $1\leq i\leq k$, then we can see that an odd double ladder with $k\geq
2$ and the matching is $\{(a_{1}c_{k+1}), (b_{1}b_{2k+1}),
(c_{1}a_{k})\}$ is isomorphic to the generalized Petersen graph $G(2k+1, 2)$. By
Theorem \ref{Schrag},  $G(2k+1, 2)$ is 2-extendable except $G(5, 2)$ which is the Petersen graph.

 If an even double ladder of length $k$ is $U(5)$, then we can draw it on the plane, please see Figure \ref{planar} for $k=7$ as an example.  Therefore, it is a planar graph. By Theorem \ref{Lou91}, it is 2-extendable.
 \begin{figure}[!htbp]
\begin{center}
\includegraphics[totalheight=5cm]{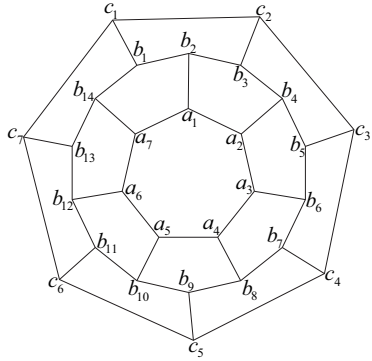}
 \caption{\label{planar} A drawing of an even ladder of length 7 in $U(5)$ on the plane.}
\end{center} 
\end{figure}
\end{proof}

\section{The 2-extendability of vertex-transitive graphs}
In this section, we first show the 2-extendability of bipartite vertex-transitive graphs. The generalized Hall's Theorem and a necessary condition of a graph to be $n$-extendable are used. As we see, if we substitute $k=0$ in the following theorem, then Hall's Theorem is obtained.

\begin{Lemma}[\cite{Plummer86}]\label{equivalent}
 Let $G$ be a connected graph with bipartition $(U,W)$ and
$k$ a positive integer such that $k\leq \frac{|V(G)|-2}{2}$. Then $G$ is $k$-extendable if and only if $|U|=|W|$ and for each non-empty subset $X$ of $U$ with $|X|\leq |U|-k$, $|N_G(X)|\geq |X|+k$.
\end{Lemma}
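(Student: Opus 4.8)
The plan is to work from the equivalent deletion formulation of extendability: by definition $G$ is $k$-extendable precisely when it carries at least one matching of size $k$ and, for every such matching $M$, the graph $G-V(M)$ has a perfect matching. Since $G-V(M)$ is again bipartite, with parts $U\setminus V(M)$ and $W\setminus V(M)$ of common size $|U|-k$, I will analyze the existence of this perfect matching through Hall's theorem (the $k=0$ case of the statement). Throughout, write $n=|U|$; the hypothesis $k\le\frac{|V(G)|-2}{2}$ is exactly $n\ge k+1$, so a perfect matching already contains a matching of size $k$. The two implications are handled separately.

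For sufficiency, assume $|U|=|W|=n$ and that $|N_G(X)|\ge|X|+k$ for every nonempty $X\subseteq U$ with $|X|\le n-k$. I would first check that $G$ itself has a perfect matching by verifying Hall's condition $|N_G(S)|\ge|S|$ for all $S\subseteq U$: for $|S|\le n-k$ this is immediate from the hypothesis, while for $|S|>n-k$ I would pick a subset $S_0\subseteq S$ with $|S_0|=n-k$ and conclude $|N_G(S)|\ge|N_G(S_0)|\ge(n-k)+k=n$, forcing $N_G(S)=W$. Given any $k$-matching $M$, I then apply Hall to $G-V(M)$: for $S\subseteq U\setminus V(M)$ one has $|S|\le n-k$ and $N_{G-V(M)}(S)=N_G(S)\setminus V(M)$, hence $|N_{G-V(M)}(S)|\ge|N_G(S)|-k\ge|S|$. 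Thus $G-V(M)$ satisfies Hall's condition and has a perfect matching, so $M$ extends.

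For necessity I would argue by induction on $k$, the base case $k=1$ following from $1$-extendability together with connectivity. Assuming the statement for $k-1$, let $G$ be $k$-extendable; then $|U|=|W|=n$ since a perfect matching exists, and $G$ is also $(k-1)$-extendable, so the inductive hypothesis gives $|N_G(X)|\ge|X|+k-1$ for every nonempty $X\subseteq U$ with $|X|\le n-k$. Hence the only way the desired bound $|N_G(X)|\ge|X|+k$ can fail is that $|N_G(X)|=|X|+k-1$ exactly, and I aim to contradict $k$-extendability in that case by exhibiting a $k$-matching that does not extend. The idea is to find $k$ independent edges joining $U\setminus X$ to $N_G(X)$; deleting their $2k$ endpoints leaves $X$ intact while removing exactly $k$ vertices of $N_G(X)$, so $X$ then has only $|X|-1$ available neighbors and Hall fails in the remaining graph.

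The main obstacle is precisely the existence of those $k$ independent edges between $U\setminus X$ and $N_G(X)$, which I would establish with K\"onig's theorem (equivalently, the defect form of Hall's theorem): the maximum number of vertices of $N_G(X)$ that can be matched into $U\setminus X$ equals $|N_G(X)|-\max_{T\subseteq N_G(X)}\bigl(|T|-|N_G(T)\setminus X|\bigr)$. Using $1$-extendability of $G$ in its symmetric form on the side $W$ — for nonempty $T\subseteq W$ with $|T|\le n-1$ one has $|N_G(T)|\ge|T|+1$ — together with $|N_G(T)\setminus X|\ge|N_G(T)|-|X|$, I would show $|T|-|N_G(T)\setminus X|\le|X|-1$ for all $T\subseteq N_G(X)$ (here $|T|\le|N_G(X)|=|X|+k-1\le n-1$, so the bound applies). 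This yields a matching of size at least $|N_G(X)|-(|X|-1)=k$, the required blocking matching, completing the induction. I expect verifying the symmetric $1$-extendability bound and this K\"onig estimate to be the most delicate step; everything else reduces to a direct Hall computation.
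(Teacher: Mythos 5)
The paper offers no proof of this lemma to compare against: it is imported verbatim as a known theorem of Plummer \cite{Plummer86} (a generalized Hall's theorem), and is used as a black box in the proof of Theorem \ref{bipartite}. So your argument stands or falls on its own, and, having checked it, it is correct. Sufficiency is the standard surplus-Hall computation: the hypothesis $k\le\frac{|V(G)|-2}{2}$ gives $n=|U|\ge k+1$, Hall's condition for $G$ itself follows by monotonicity of neighborhoods, and for any $k$-matching $M$ the deleted graph $G-V(M)$ satisfies Hall because each nonempty $S\subseteq U\setminus V(M)$ has $|S|\le n-k$ and loses at most $k$ neighbours to $V(M)$. For necessity, your induction closes properly: if $|N_G(X)|=|X|+k-1$ exactly, then for any nonempty $T\subseteq N_G(X)$ one has $|T|\le|X|+k-1\le n-1$, so the $1$-extendability surplus on the $W$-side gives $|N_G(T)|\ge|T|+1$, hence $|T|-|N_G(T)\setminus X|\le|X|-1$, and the defect form of Hall's theorem yields a matching of size $|N_G(X)|-(|X|-1)=k$ between $N_G(X)$ and $U\setminus X$; deleting its endpoints strands $X$ with $|X|-1$ neighbours, contradicting $k$-extendability. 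Two things you should state explicitly rather than leave implicit. First, you rely on the fact that a $k$-extendable graph is $(k-1)$-extendable (and, iterating, $1$-extendable, which you also need on the $W$-side); this is legitimate --- the paper itself quotes it from \cite{Plummer80} --- but it is an external ingredient your induction does not supply. Second, the base case $k=1$ is dispatched in half a sentence; it deserves its two lines: Hall gives $|N_G(X)|\ge|X|$, and if equality held, a perfect matching would pair $X$ with $N_G(X)$, connectivity would provide an edge $uw$ with $u\in U\setminus X$ and $w\in N_G(X)$, and that single edge would be a non-extendable $1$-matching --- i.e., precisely your blocking construction at $k=1$. With those two points written out, your proof is a complete, self-contained replacement for the citation.
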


\begin{Lemma}[\cite{Plummer80}]\label{n+1connected}
An $n$-extendable graph is $(n+1)$-connected.
\end{Lemma}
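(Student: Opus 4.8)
The plan is to prove the statement by contradiction via a parity/blocking-matching argument, in the spirit of Plummer. Write $|V(G)| = 2p$; since $G$ is $n$-extendable, $|V(G)| \ge 2n+2$ forces $p \ge n+1$, and because all graphs in this paper are connected, the only thing to exclude is a vertex cut of size $k$ with $1 \le k \le n$. The entire argument rests on one construction: given such a small cut $S$, produce a matching $M$ of size exactly $n$ that saturates $S$ and leaves an odd number of unsaturated vertices inside some component of $G-S$. Then, since $S \subseteq V(M)$ and distinct components of $G-S$ are non-adjacent, $G - V(M)$ has an odd component, hence no perfect matching by Tutte's theorem; this contradicts the fact that the size-$n$ matching $M$ must extend to a perfect matching.

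First I would warm up with the bound $\delta(G) \ge n+1$, which is exactly the case where the cut is the neighbourhood of a single vertex. If $\deg(v) = d \le n$, set $S = N(v)$, so that $\{v\}$ is an odd (size-one) component of $G-S$. I would build a matching $M$ of size $n$ that saturates all of $N(v)$ but misses $v$; any perfect matching $M^{*} \supseteq M$ would then have to match $v$ to some $u \in N(v)$, but $u$ is already matched by $M$ to a vertex other than $v$, so $u$ would receive two partners in $M^{*}$, a contradiction. Existence of such an $M$ follows by starting from a perfect matching $M_{0}$ of $G$ and using an $M_{0}$-alternating-path argument in $G-v$ to cover all of $N(v)$ while keeping $v$ exposed, then padding with disjoint edges to reach size exactly $n$; the hypothesis $|V(G)| \ge 2n+2$ guarantees enough room to pad.

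For the general statement, suppose $G$ has a minimum cut $S$ with $|S| = k \le n$, and let $C_{1}, \ldots, C_{r}$ with $r \ge 2$ be the components of $G-S$, writing $c_{i} = |C_{i}|$. Since $S$ is a minimum separator, every vertex of $S$ has neighbours in at least two components, which is what lets $S$ be saturated by a matching into the components. Letting $s_{i}$ denote the number of vertices of $C_{i}$ matched to $S$, the parity of the leftover of $C_{i}$ after such a matching is $c_{i} - s_{i}$. I would choose $M$ so that $c_{i} - s_{i}$ is odd for some $i$: if some $c_{i}$ is odd, strand it by taking $s_{i} = 0$; if all $c_{i}$ are even (which forces $k$ even), route a single $S$-vertex into one component to make that $s_{i} = 1$. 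After padding $M$ with edges internal to the components up to size exactly $n$, the chosen component has an odd leftover, giving the required odd component of $G - V(M)$ and the contradiction.

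The main obstacle is the feasibility of this blocking matching: I must guarantee that $S$ can genuinely be saturated by a matching realising the prescribed parity distribution $(s_{i})$, and that what remains can be padded up to size exactly $n$ by independent edges inside the components. This is precisely where the hypotheses do the work, so the feasibility should be verified rather than computed: $|V(G)| \ge 2n+2$ supplies the padding room (note $\sum_{i} e_{i} = n - k + f \ge 0$, where $f$ is the number of $M$-edges inside $S$), minimality of $S$ supplies the neighbours needed to saturate $S$ into the components while avoiding any prescribed stranded vertex, and the degenerate configurations — single-vertex or very small components, and edges lying inside $S$ — must be disposed of by a short Hall/Tutte-type check.
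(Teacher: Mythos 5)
There is a genuine gap, and it sits exactly where you placed your disclaimer. Note first that the paper itself offers no proof of this lemma: it is quoted directly from Plummer's original paper \cite{Plummer80}, so your argument has to stand entirely on its own. Your overall strategy is the right kind of argument (assume a cut $S$ with $|S|=k\le n$, build a matching $M$ of size $n$ saturating $S$ that leaves an odd component in $G-V(M)$, and contradict extendability), and your parity bookkeeping is correct: stranding an odd component, or routing exactly one edge of $M$ into an even one, does force an odd component of $G-V(M)$, which then kills any perfect matching containing $M$. But the existence of such an $M$ is the entire mathematical content of the theorem, and you never establish it. The justification you give --- ``since $S$ is a minimum separator, every vertex of $S$ has neighbours in at least two components, which is what lets $S$ be saturated by a matching into the components'' --- is a non sequitur. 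Minimality of $S$ does guarantee that each vertex of $S$ has a neighbour in every component of $G-S$, but one neighbour apiece is far from a system of distinct representatives: several vertices of $S$ may share all of their neighbours outside the stranded component $C_1$, so Hall's condition for saturating $S$ within $V(G)\setminus(S\cup C_1)$ (even allowing edges inside $S$) can fail in exactly the way Hall's condition always fails. Excluding such configurations cannot be done from cut-minimality alone; it has to be extracted from the $n$-extendability hypothesis, and that extraction is precisely the work Plummer's proof does. Deferring it as ``a short Hall/Tutte-type check'' leaves the proof without its core.

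The padding step has the same problem on a smaller scale: after saturating $S$ you need $n-k+f$ additional independent edges inside the components, avoiding the vertices already used by $M$, and neither $|V(G)|\ge 2n+2$ nor connectivity guarantees these exist (the components left over after saturation may contain too few independent edges). This particular hole is repairable by citing the companion result from the same source \cite{Plummer80}, also quoted in this paper, that an $n$-extendable graph is $m$-extendable for every $m\le n$: then a blocking matching of any size at most $n$ already yields the contradiction, so no padding is needed. Your warm-up ($\delta(G)\ge n+1$) has the analogous unproved step --- the ``alternating-path argument'' producing a matching that saturates $N(v)$ while exposing $v$ is asserted, not given, and it too needs the extendability hypothesis rather than just a perfect matching. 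In summary: the skeleton and the parity logic are sound, but the saturation lemma on which everything rests is missing, so this is an outline of a proof rather than a proof.
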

\vspace{.3cm}

\noindent {\em Proof of Theorem \ref{bipartite}.} Let $G$ be a bipartite graph with bipartition $(X,Y)$. Since $G$ is
$k$-regular, $|X|=|Y|$.  

\vskip 0.1cm

\smallskip

Necessity. By Lemma \ref{n+1connected}, a 2-extendable graph is necessarily 3-connected and hence of minimum degree at least three. Consequently, it is not a cycle.

\vskip 0.1cm

\smallskip

Sufficiency. Suppose by the contrary that $G$ is not
2-extendable. Then by Theorem \ref{equivalent}, there exists $\emptyset \neq S\subset X$ with $|S|\leq |X|-2$
and $|N_{G}(S)|\leq |S|+1$. On the other hand,  $G$ is 1-extendable by Lemma \ref{1-extendable}. Furthermore, by Theorem \ref{equivalent}, $|N_{G}(S)|\geq |S|+1$. Therefore, $|N_{G}(S)|=|S|+1$. Denote $S'=S\cup
N_{G}(S)$. Then $d(S')=k$, $S'\neq \emptyset$ and $\overline{S'}\neq \emptyset$. Then by Theorem \ref{superlambda}, $\partial(S')$ isolates a singleton, which contradicts that $|S|\leq |X|-2$. This completes the proof. \hspace*{\fill}$\square$

\vspace{.3cm}

Next, we consider the 2-extendabilities of 
non-bipartite cubic vertex-transitive graphs. Another necessary condition  for a graph to be $n$-extendable is presented as follows.

\begin{Lemma}[\cite{Dean92}]{\label{n+t}}
Let $v$ be a vertex of degree $n+t$ in an n-extendable graph $G$.
Then $G[N(v)]$ does not contain a matching of size $t$.
\end{Lemma}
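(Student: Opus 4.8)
The plan is to argue by contradiction, using a single structural observation: deleting the two endpoints of an edge of an $n$-extendable graph drops the extendability by exactly one. Suppose, contrary to the statement, that $G[N(v)]$ contains a matching $M_0=\{x_1y_1,\dots,x_ty_t\}$ of size $t$. Since the $2t$ endpoints are distinct neighbours of $v$, we have $2t\le d(v)=n+t$, which forces $t\le n$; in particular $n-t\ge 0$, so the argument below will not run off the end.

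First I would isolate the reduction step as a sublemma: if $H$ is $k$-extendable with $k\ge 1$ and $ab\in E(H)$, then $H-a-b$ is $(k-1)$-extendable. This is immediate from the definition -- any matching $F$ of size $k-1$ in $H-a-b$ makes $F\cup\{ab\}$ a matching of size $k$ in $H$, which extends to a perfect matching $P$ of $H$, and then $P-a-b$ is a perfect matching of $H-a-b$ extending $F$; the order condition $|H-a-b|\ge 2(k-1)+2$ follows from $|H|\ge 2k+2$. Applying this reduction $t$ times along the distinct edges of $M_0$, the graph $G'=G-\{x_1,y_1,\dots,x_t,y_t\}$ is $(n-t)$-extendable. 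The crucial bookkeeping is that all $2t$ deleted vertices lie in $N(v)$, while $v$ itself is not deleted (it is not a neighbour of itself), so the degree of $v$ inside $G'$ is exactly $d(v)-2t=(n+t)-2t=n-t$.

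The contradiction then comes from Lemma \ref{n+1connected}: the $(n-t)$-extendable graph $G'$ is $(n-t+1)$-connected, and therefore has minimum degree at least $n-t+1$, whereas $v$ has degree only $n-t$ in $G'$. The one case needing separate care is $n-t=0$, where the connectivity bound is vacuous; there I would argue directly that $G'$ possesses a perfect matching yet $v$ is isolated in $G'$, which is impossible. The hard part is really only conceptual rather than computational: recognizing that peeling off the matching $M_0$ edge-by-edge preserves extendability (down one level per edge) and that this peeling removes precisely $2t$ of the $n+t$ edges at $v$, so $v$ ends up deficient in the reduced graph. Once that reduction is set up, the degree count and the connectivity bound close the argument.
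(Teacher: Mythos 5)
The paper itself offers no proof of Lemma \ref{n+t}; it is imported from \cite{Dean92} as a quoted result, so your argument has to stand on its own. Your overall strategy is the natural one: the sublemma (deleting the two endpoints of an edge of a $k$-extendable graph yields a $(k-1)$-extendable graph) is correct as you prove it, the iteration along the matching $M_0$ is legitimate since $t\le n$, the bookkeeping $\deg_{G'}(v)=n-t$ is right, and the separate treatment of the case $n-t=0$ (perfect matching versus isolated vertex) is exactly what is needed there.

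There is, however, a genuine gap at the final step, where you apply Lemma \ref{n+1connected} to $G'$. That lemma (Plummer's theorem, stated in the paper under the blanket convention that all graphs are connected) is a statement about \emph{connected} $n$-extendable graphs, while your $G'$ arises from $G$ by deleting $2t$ vertices and need not be connected: $2t$ can be as large as $2n$, whereas $G$ is only guaranteed to be $(n+1)$-connected. The connectivity hypothesis cannot be dropped: under the notion of extendability your sublemma actually establishes (``every matching of size $k-1$ extends to a perfect matching''), the disconnected graph $K_2\cup K_2$ is $1$-extendable yet has minimum degree $1$, so the implication ``$(n-t)$-extendable $\Rightarrow$ minimum degree at least $n-t+1$'' is false without connectedness. (If instead you take ``extendable'' to include connectedness, then the gap moves into your sublemma, whose proof never shows $H-a-b$ is connected.) The repair is short but it is a missing step: interleave Lemma \ref{n+1connected} with the peeling rather than invoking it once at the end. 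Writing $G_i$ for the graph after $i$ deletions, if $G_i$ is connected and $(n-i)$-extendable with $n-i\ge 2$, then it is $(n-i+1)$-connected by Lemma \ref{n+1connected}, so removing the next two vertices leaves a graph that is still $(n-i-1)$-connected, in particular connected; by induction every $G_i$ with $n-i\ge 1$ is connected. Hence $G'=G_t$ is connected whenever $n-t\ge 1$, and only after this observation may you conclude that $G'$ is $(n-t+1)$-connected and reach your degree contradiction.
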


From it, we can obtain the following result.

\begin{cor}
Let $G$ be a cubic graph. If $G$ is 2-extendable, then it does not contain a triangle.
\end{cor}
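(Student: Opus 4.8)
The plan is to derive this directly from Lemma \ref{n+t}, which is the only tool required; the statement is essentially an immediate specialization of Dean's theorem to the cubic case. First I would set up the parameters: since $G$ is cubic, every vertex $v$ has degree exactly $3$, and since $G$ is assumed $2$-extendable we take $n=2$ in Lemma \ref{n+t}. Writing $\deg(v)=3=n+t$ forces $t=1$. The lemma then asserts that $G[N(v)]$ contains no matching of size $t=1$, which is precisely the statement that $G[N(v)]$ has no edge at all, for every vertex $v$.

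The second step is to translate the presence of a triangle into the forbidden configuration. Suppose for contradiction that $G$ contains a triangle on vertices $u,v,w$. Then $u$ and $w$ are both neighbors of $v$, and since $uw\in E(G)$, the induced subgraph $G[N(v)]$ contains the edge $uw$, hence a matching of size one. This contradicts the conclusion of the previous paragraph, so $G$ is triangle-free.

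I do not anticipate any genuine obstacle here: the result is a one-line consequence of Lemma \ref{n+t} once the arithmetic $n+t=3$ with $n=2$ is observed, together with the trivial observation that a single edge in a neighborhood is the same thing as a matching of size one. The only point worth stating carefully is the passage ``no matching of size one'' $\Longleftrightarrow$ ``no edge,'' and the identification of a triangle through $v$ with an edge inside $G[N(v)]$; both are routine. Accordingly the write-up will be short, consisting of the parameter substitution followed by the contrapositive argument above.
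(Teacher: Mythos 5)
Your proposal is correct and is exactly the argument the paper intends: the paper derives the corollary directly from Lemma \ref{n+t} by taking $n=2$, $t=1$ for a cubic $2$-extendable graph, so that $G[N(v)]$ admits no matching of size one, i.e., no edge, which is precisely the absence of a triangle through $v$. The paper leaves these details implicit (it merely says the corollary follows ``from it''), and your write-up supplies them faithfully.
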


A {\em fullerene} is a cubic plane graph with only pentagons and hexagons.  As we know, the dodecahedron is the smallest fullerene with only twelve pentagons. 

\begin{thm}[\cite{Zhangheping}]\label{Zhangheping}
Every fullerene graph is 2-extendable. In particular, the dodecahedron is 2-extendable.
\end{thm}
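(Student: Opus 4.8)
The plan is to deduce the theorem from the planar $2$-extendability criterion of Lou (Theorem \ref{Lou91}): every cubic, $3$-connected, planar graph that is cyclically $5$-edge-connected is $2$-extendable. It therefore suffices to check that an arbitrary fullerene $G$ meets all four hypotheses. Two of them are immediate: $G$ is cubic and planar by the very definition of a fullerene. Moreover, since the only faces of $G$ are pentagons and hexagons, $G$ contains no triangle or quadrilateral, so its girth equals $5$.

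For $3$-connectivity I would invoke the fact that a fullerene is the $1$-skeleton of a simple convex $3$-polytope, so that $3$-connectivity follows from Steinitz's theorem; alternatively it is a consequence of the edge-connectivity estimates below.

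The substance of the proof is the cyclic $5$-edge-connectivity. For the upper bound, any pentagonal face is an induced $5$-cycle, and since $G$ is cubic each of its five vertices sends exactly one edge to the outside; hence $\zeta(G)=5$, and the Wang--Zhang inequality $c\lambda(G)\le\zeta(G)$ (\cite{wang}) gives $c\lambda(G)\le 5$. For the lower bound I would exclude cyclic edge-cuts of size $3$ and $4$. Assuming such a cut exists, planarity lets me realise it as a simple closed curve on the sphere meeting at most four edges and splitting the sphere into two disks, each of which carries a cycle and hence at least one face. Recalling that Euler's formula forces exactly twelve pentagons ($p=12$) in any fullerene, a curvature/discharging count on a patch of pentagons and hexagons bounded by only three or four edges shows that such a patch cannot close up consistently, a contradiction. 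This lower bound is the crux of the argument and the step I expect to be the main obstacle, since it genuinely uses the rigid pentagon--hexagon structure rather than girth alone.

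Once $c\lambda(G)=5$ is established, Theorem \ref{Lou91} yields the $2$-extendability of every fullerene. The dodecahedron is the unique fullerene with no hexagons; being in addition vertex-transitive of girth five, it satisfies $c\lambda=5$ at once by Theorem \ref{Nedela95}, so it is covered both by the general argument and by this shortcut, completing the proof.
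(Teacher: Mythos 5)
The paper contains no proof of this statement: Theorem \ref{Zhangheping} is imported verbatim from \cite{Zhangheping}, so your proposal can only be judged against the literature. There, the known proofs do follow precisely the route you outline (establish a cyclic edge-connectivity bound for fullerenes and invoke the Holton--Lou--Plummer theorem, i.e.\ Theorem \ref{Lou91}), so your strategy is the right one. However, as a proof your proposal has a genuine gap exactly where you flag it: the claim that a fullerene admits no cyclic edge-cut of size $3$ or $4$. Once Theorem \ref{Lou91} is granted, this claim \emph{is} the entire mathematical content of the theorem, and ``a curvature/discharging count on a patch \dots shows that such a patch cannot close up consistently'' is not an argument but a restatement of what must be proved. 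Executing it takes real work: for a cyclic $k$-edge-cut $F$ with $k\le 4$ one must realise one side $X$ as a plane patch whose interior faces are faces of $G$ (this already needs $G[X]$ connected and $\overline{X}$ contained in a single face of $G[X]$), and then play Euler's formula against the face-size restriction and the structure of the boundary walk to get a contradiction. This is a published theorem in its own right (Do\v{s}li\'{c}'s cyclic $5$-edge-connectivity of fullerene graphs), not a routine verification, and your sketch supplies none of it.

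Two secondary points. Your justification of $3$-connectivity via Steinitz's theorem is circular under the paper's definition of a fullerene (a cubic plane graph with only pentagonal and hexagonal faces): asserting that such a graph is the skeleton of a convex polytope presupposes $3$-connectivity. Your fallback does work: in a cubic graph any edge-cut of size at most $2$ whose sides are not single vertices is cyclic, so cyclic $5$-edge-connectivity forces $3$-edge-connectivity, hence $3$-connectivity for cubic graphs. Likewise, ``girth $5$'' does not follow from the face sizes alone, since a short cycle need not bound a face; fortunately neither the girth nor the upper bound $c\lambda(G)\le 5$ is needed to apply Theorem \ref{Lou91}, so this inference is harmless. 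On the positive side, your shortcut for the dodecahedron is complete: vertex-transitivity and girth $5$ give $c\lambda=5$ by Theorem \ref{Nedela95}, Mader's theorem (Theorem \ref{mader}) plus cubicity give $3$-connectivity, and Theorem \ref{Lou91} applies. Since the dodecahedron is the only fullerene the paper actually uses (in the rosette case of the proof of Theorem \ref{maintheorem}), that fragment of your proposal would suffice for the paper's purposes; but it does not prove the general statement about all fullerenes.
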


Two characterizations of vertex-transitive graphs by restricting some structures are also needed.
\begin{Lemma}[\cite{li2014}]\label{structure}
Let $G\neq K_{2}$ be a connected $k$-regular  non-bipartite
vertex-transitive graph. Let $S\subseteq V(G)$ with
$|S|=|\overline{S}|+2$, where $\overline{S}=V(G)\backslash S$. If in
addition, $\overline{S}$ is an independent set of $G$, then $G$ is
isomorphic to $Z_{4n+2}(1,4n+1,2n, 2n+2)$, $Z_{4n}(1,4n-1,2n)$ or
$Z_{4n+2}(2,4n,2n+1)$ or the Petersen graph.
\end{Lemma}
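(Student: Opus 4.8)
The plan is to treat this as a rigidity problem: the hypothesis pins down the edge count and the local structure so tightly that $G$ is forced into a short list. I would start with elementary counting. Writing $N=|V(G)|$, the relation $|S|=|\overline{S}|+2$ forces $N$ to be even with $|S|=N/2+1$ and $|\overline{S}|=N/2-1$. Since $\overline{S}$ is independent, $G-S$ consists of $N/2-1$ isolated vertices; and a degree count (every vertex has degree $k$, while edges inside $S$ are counted twice) gives $||G[S]||=k$ exactly, with $k(N/2-1)$ edges running between $S$ and $\overline{S}$.

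Next I would extract two structural facts from $k$-regularity together with non-bipartiteness. First, a $k$-regular graph possessing an independent set of size $N/2$ is bipartite, since then all $kN/2$ edges must cross the bipartition and the complementary half is forced to be independent too; hence the independence number satisfies $\alpha(G)\le N/2-1$, and with $\overline{S}$ as witness $\alpha(G)=N/2-1$ exactly. In particular no vertex of $S$ can have all $k$ of its neighbours inside $S$, for otherwise that vertex together with $\overline{S}$ would form an independent set of size $N/2$; thus $G[S]$ has maximum degree at most $k-1$. Second, I would bound $k$. For $k\ge 5$ the graph $G[S]$ has $k$ edges and maximum degree at most $k-1$, so it is neither a star nor a triangle and therefore contains a matching $M$ of size two. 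Setting $S'=S\setminus V(M)$ with $V(M)\subseteq S$, the graph $(G-V(M))-S'$ is again the independent set $\overline{S}$, so it has $|\overline{S}|=|S'|+2$ odd components while only $|S'|$ vertices have been deleted; by Tutte's theorem $G-V(M)$ has no perfect matching, so $M$ is a $2$-matching that does not extend. This contradicts Theorem \ref{sun}, ruling out $k\ge 5$, so $k\in\{3,4\}$.

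It remains to reconstruct $G$ for $k=3$ and $k=4$. First I would establish that $G$ is triangle-free: when $N>4k-2$ the count $||G[S]||=k$ forces a vertex of $S$ all of whose neighbours lie in the independent set $\overline{S}$, so its neighbourhood is independent, and by vertex-transitivity every neighbourhood is independent; the finitely many small orders (notably $|G|=10$) are checked by hand and produce the Petersen graph. With girth at least four secured, I would study the bipartite graph between $S$ and $\overline{S}$ together with the $k$ internal edges, and use vertex-transitivity to propagate the local adjacency pattern into a globally periodic one, the expectation being that $G$ is forced to be a circulant; the precise placement of the internal edges (the non-bipartite ``twist'') then separates $Z_{4n}(1,4n-1,2n)$ from $Z_{4n+2}(2,4n,2n+1)$ when $k=3$ and singles out $Z_{4n+2}(1,4n+1,2n,2n+2)$ when $k=4$.

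The hard part will be exactly this last reconstruction: converting the local data (an independent set of size $N/2-1$, precisely $k$ internal edges in $S$, vertex-transitivity, and girth at least four) into the global conclusion that $G$ is one of the listed circulants. The obstacle is that vertex-transitivity must be used to show the arrangement of internal and cross edges recurs with a fixed period, and that the unique non-bipartite twist is consistent only with the stated connection sets, with the Petersen graph emerging as the sole sporadic exception outside the circulant families.
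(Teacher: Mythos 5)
First, a point of comparison that matters for this review: the paper contains no proof of Lemma \ref{structure} at all --- it is quoted as a known result from \cite{li2014} --- so your proposal cannot be measured against an in-paper argument and must stand on its own. Its preliminary reductions do stand: the count $||G[S]||=k$ is right; the observation that a $k$-regular graph with an independent set of half its order is bipartite (hence $\alpha(G)=N/2-1$ and $\Delta(G[S])\le k-1$) is right; and the elimination of $k\ge 5$ is sound, since $k$ edges with maximum degree at most $k-1$ can form neither a star nor a triangle and so contain a matching $M$ of size two, after which Tutte's theorem with $U=S\setminus V(M)$ shows $M$ does not extend, contradicting Theorem \ref{sun}. (Two quibbles: you never dispose of $k=2$, though that is easy, as a non-bipartite $2$-regular graph is an odd cycle while $|S|=|\overline{S}|+2$ forces even order; and leaning on Theorem \ref{sun} is admissible inside this paper but deserves a circularity check, since \cite{sun} and \cite{li2014} are companion results of the same school.)

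The genuine gap is the final step, and it is not a technical detail --- it is the entire content of the lemma. Everything you establish ($k\in\{3,4\}$, exactly $k$ internal edges, triangle-freeness when $N>4k-2$) is preamble; the conclusion to be proved is that $G$ is \emph{isomorphic} to one of three explicit circulant families or the Petersen graph, and for this you offer only the ``expectation'' that vertex-transitivity propagates the local pattern into a globally periodic one. No mechanism is proposed for extracting a cyclic structure, and none can be soft: vertex-transitivity does not provide a transitive cyclic group of automorphisms, and the Petersen graph itself (vertex-transitive, satisfying your local data with girth $5$, yet not a circulant) shows that any propagation argument must break into cases and confront sporadic exceptions rather than conclude uniformly. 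Your treatment of the small orders is also off: checking $N\le 4k-2$ by hand does not yield only the Petersen graph but also $K_{4}=Z_{4}(1,3,2)$, the prism $Z_{6}(2,4,3)$, the octahedron $Z_{6}(1,5,2,4)$ and the Wagner graph $Z_{8}(1,7,4)$ --- that is, the small members of the listed families, some containing triangles --- so the ``sporadic versus circulant'' dichotomy you envisage is not how the case analysis actually splits. As written, the proposal is a correct set-up plus a statement of intent; the classification argument that constitutes the lemma (and is carried out in \cite{li2014}) is missing.
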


\begin{Lemma}[\cite{li2014}]\label{k3g4}
Let $G$ be a connected 3-regular  non-bipartite vertex-transitive
graph of girth 4. If $G$ has adjacent quadrangles, then it is
isomorphic to $Z_{4n}(1, 4n-1, 2n)$ or $Z_{4n+2}(2, 4n, 2n+1)$ with
$n\geq 2$.
\end{Lemma}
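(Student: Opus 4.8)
The plan is to pin down the local configuration forced by a pair of adjacent quadrangles, propagate it around the graph using vertex-transitivity to exhibit a global ladder structure, and then recognise the two resulting families as the named circulants; the classification in Lemma \ref{structure} provides the natural endgame for the last step.

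\emph{Local block.} First I would observe that in a cubic graph two adjacent quadrangles must in fact share an edge. Indeed, if two $4$-cycles met in a single vertex $v$ and nothing else, then $v$ would acquire four distinct neighbours, contradicting $\deg v=3$; and if they met only in two opposite vertices $u,w$, then each of $u,w$ would again need four neighbours, the same contradiction. Hence the two quadrangles share an edge $uv$. Writing $N(u)=\{v,y_1,y_2\}$ and $N(v)=\{u,x_1,x_2\}$, girth $4$ forces the two quadrangles to be $uvx_1y_1$ and $uvx_2y_2$ with $x_1\sim y_1$ and $x_2\sim y_2$, and it makes $u,v,x_1,x_2,y_1,y_2$ pairwise distinct (a coincidence $x_i=y_j$ would create a triangle on $u,v$). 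These six vertices induce a theta-graph: the edge $uv$ together with two internally disjoint $u$–$v$ paths of length three. This is the basic building block; I will read $uv$ as a ``rung'' and $x_1y_1$, $x_2y_2$ as the two neighbouring rungs.

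\emph{Propagation and recognition.} By vertex-transitivity every vertex is the end of such a rung, so the four degree-two vertices $x_i,y_i$ of the block must be completed to the same local picture by their remaining edges. Using that $G$ is cubic and that, by Corollary \ref{notriangletrivial}, every $3$-edge-cut is trivial (so no small separation can interrupt the pattern), I would argue that the blocks chain together consistently: each square shares exactly one rung with the next, producing a cyclic ladder which, by connectedness, exhausts $V(G)$. The ladder can close in exactly one of two ways — untwisted, giving a prism over a cycle, or with a single half-turn, giving a M\"obius ladder. Non-bipartiteness then selects the admissible closures: the prism over an \emph{odd} cycle $C_{2n+1}$, which under the obvious labelling is $Z_{4n+2}(2,4n,2n+1)$, and the M\"obius ladder on $4n$ vertices, which is $Z_{4n}(1,4n-1,2n)$. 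Finally, girth exactly $4$ forces $n\geq 2$, since $n=1$ gives $Z_4(1,3,2)\cong K_4$ and $Z_6(2,4,3)$, both of which contain a triangle.

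\emph{Main obstacle and an alternative endgame.} I expect the propagation together with the closure analysis to be the crux: one must rule out ``stray'' chords appearing as the ladder is built and verify that vertex-transitivity really enforces a single global twist rather than several, and Theorem \ref{imprimitive} together with the edge-connectivity bounds of Theorem \ref{mader} should be the tools for this. To organise the endgame cleanly I would instead extract from the completed ladder a vertex set $S$ with $|S|=|\overline{S}|+2$ whose complement $\overline{S}$ is independent — the complement of a maximum independent set of the ladder has exactly this cardinality — and then invoke Lemma \ref{structure}. That lemma returns $G\cong Z_{4n+2}(1,4n+1,2n,2n+2)$, $Z_{4n}(1,4n-1,2n)$, $Z_{4n+2}(2,4n,2n+1)$ or the Petersen graph; the first is $4$-regular and the last has girth $5$, so both are discarded, leaving precisely $Z_{4n}(1,4n-1,2n)$ and $Z_{4n+2}(2,4n,2n+1)$ with $n\geq 2$, as claimed.
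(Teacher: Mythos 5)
First, a remark on provenance: the paper never proves this lemma at all --- it is imported from \cite{li2014}, exactly like Lemma \ref{structure} --- so your proposal cannot be compared against an internal proof and must stand on its own. On its own, it is an outline with the right global picture (theta block $\to$ cyclic ladder $\to$ prism over an odd cycle or M\"obius ladder $\to$ the two circulants), but it has a genuine gap at each of the two places where the actual work lies. The first gap is local: girth $4$ does \emph{not} force the two quadrangles through the shared edge $uv$ to be $uvx_1y_1$ and $uvx_2y_2$ inducing a theta graph. They could instead be $uvx_1y_1$ and $uvx_1y_2$, sharing the edge $vx_1$ as well, in which case $N(u)=N(x_1)=\{v,y_1,y_2\}$ and the configuration is a $K_{2,3}$; and even in the six-vertex case the chords $x_1y_2$, $x_2y_1$ are compatible with girth $4$. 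These degenerations can be excluded, but only with tools your local step never invokes: the $K_{2,3}$ block emits exactly three edges, so Corollary \ref{notriangletrivial} forces its complement to be a single vertex and $G\cong K_{3,3}$, which only non-bipartiteness kills; a single chord leaves a six-vertex block emitting just two edges, contradicting $\lambda(G)=3$ (Theorem \ref{mader}); both chords give $K_{3,3}$ again.

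The second gap is the one you yourself flag as the crux, and deferring it with ``I would argue'' leaves the lemma unproved, since this propagation \emph{is} the lemma. Vertex-transitivity gives that every vertex is an endpoint of some rung (an edge lying in two quadrangles), but it does not tell you \emph{which} edge at a given vertex is its rung. If the rung at $x_1$ were $vx_1$ rather than $x_1y_1$, then $x_1$ and $x_2$ would have a common neighbour outside the block, all three edges at $v$ would be rungs, and by transitivity every edge of $G$ would lie in two quadrangles; pursuing that branch with the same edge-cut arguments leads to the cube $Q_3$, again excluded only by non-bipartiteness. Ruling out such short circuits at every extension step, and proving the chain can close only in the two ways you describe (no skew identifications, no stray chords), is exactly the missing content; Corollary \ref{notriangletrivial} and Theorem \ref{mader} are the right tools but they do not do this automatically. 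Your endgame, by contrast, is correct but redundant: once a closed ladder is in hand, the graph is already identified as $Z_{4n}(1,4n-1,2n)$ or $Z_{4n+2}(2,4n,2n+1)$ with $n\geq 2$ forced by girth, so the detour through Lemma \ref{structure} via the complement of a maximum independent set, while valid (those complements do satisfy its hypotheses), adds nothing to the argument.
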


The Tutte's theorem, a stronger version of Tutte's theorem and a property of factor-critical graphs are used. We call a vertex set $S\subseteq V(G)$ \emph{matchable} to $G-S$ if
the (bipartite) graph $H_{s}$, which arises from $G$ by contracting
each
 component $c\in \mathcal{C}_{G-S}$ to a singleton and deleting all the edges inside $S$,
 contains a matching of $S$, where $\mathcal{C}_{G-S}$ denotes the set of the components of $G-S$. For $0\leq k \leq n$, a graph $G$ of order $n$ is said to be \emph{$k$-factor-critical}
 (\emph{$k$-fc} for short) if the removal of any $k$ vertices
 results in a graph with a perfect matching.
  
  \begin{thm}[\cite{Tutte47}] \label{tutte}
  A graph $G$ has a
perfect matching if and only if $c_{o}(G-U) \leq |U|$ for any $U \subseteq V(G)$, where
$c_{o}(G-U)$ is the number of odd components of $G-U$.
\end{thm}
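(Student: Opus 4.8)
The plan is to prove the two directions separately, with essentially all of the content in the sufficiency direction, for which I would use the edge-maximal counterexample method of Lov\'asz.

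Necessity is immediate. If $M$ is a perfect matching and $U\subseteq V(G)$, then every odd component $K$ of $G-U$ contains at least one vertex matched by $M$ to a vertex of $U$, since $M$ restricted to $K$ cannot saturate all of $K$ (as $|K|$ is odd). Distinct odd components send their matching edges to distinct vertices of $U$, whence $c_{o}(G-U)\le |U|$.

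For sufficiency, assume Tutte's condition holds for $G$. Taking $U=\emptyset$ forces $c_{o}(G)=0$, so $|V(G)|$ is even. The first observation I would record is that Tutte's condition is preserved under adding edges: an added edge either leaves $G-U$ unchanged (if it meets $U$) or merges at most two components of $G-U$, and in no case does the number of odd components increase. Suppose for contradiction that some graph on the vertex set $V(G)$ satisfies Tutte's condition yet has no perfect matching, and among all such graphs choose one, $G^{*}$, that is edge-maximal; then $G^{*}+e$ has a perfect matching for every non-edge $e$. Let $U$ be the set of vertices of $G^{*}$ adjacent to all other vertices.

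The crux is the claim that every component of $G^{*}-U$ is a complete graph. If not, some component contains vertices $x,y,z$ with $xy,yz\in E(G^{*})$ but $xz\notin E(G^{*})$, and since $y\notin U$ there is a vertex $w$ with $yw\notin E(G^{*})$. By edge-maximality, $G^{*}+xz$ has a perfect matching $M_{1}\ni xz$ and $G^{*}+yw$ has a perfect matching $M_{2}\ni yw$. The symmetric difference $M_{1}\triangle M_{2}$ is a disjoint union of even cycles alternating between $M_{1}$ and $M_{2}$, and both $xz$ and $yw$ lie in it; rerouting along the cycle(s) carrying these two edges, and using that $y$ is adjacent to both $x$ and $z$ in $G^{*}$, one produces a perfect matching of $G^{*}$ itself, a contradiction. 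This symmetric-difference rerouting, and in particular the subcase in which $xz$ and $yw$ lie on the same alternating cycle, is the main technical obstacle; the disjoint-cycle subcase is settled by taking $M_{2}$ off the cycle of $yw$ and $M_{1}$ on it, which avoids both forbidden edges.

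Granting the claim, I would finish by constructing a perfect matching of $G^{*}$ directly. Since $G^{*}-U$ consists of complete components with $c_{o}(G^{*}-U)\le |U|$, I match one vertex of each odd component to a distinct vertex of $U$, match the remaining even-sized part of each component internally (possible, being a complete graph), match the even components internally, and match the leftover vertices of $U$ among themselves; the latter form a clique and are even in number, because $|U|\equiv c_{o}(G^{*}-U)\pmod 2$ follows from $|V(G^{*})|$ being even. The resulting perfect matching contradicts the choice of $G^{*}$, completing the proof.
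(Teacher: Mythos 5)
The paper offers no proof of this theorem at all: it is quoted as a classical tool, with the proof deferred to Tutte's 1947 paper. So your proposal stands or falls on its own, and the route you take is the standard Lov\'asz edge-maximal argument (essentially the proof in Diestel's book, which this paper cites elsewhere): necessity via odd components, preservation of Tutte's condition under edge addition, the edge-maximal counterexample $G^{*}$, the set $U$ of universal vertices, the claim that all components of $G^{*}-U$ are complete, and the closing parity count $|U|\equiv c_{o}(G^{*}-U)\pmod 2$. All of these steps are reproduced correctly, and the disjoint-cycle subcase is settled correctly. The one genuine gap is exactly the place you flag as ``the main technical obstacle'': the subcase in which $xz$ and $yw$ lie on the same alternating cycle is asserted, not proven. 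Since the completeness claim is the entire content of the sufficiency direction and the disjoint-cycle subcase is trivial, deferring this subcase means the hard part of the theorem is not actually established.

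Here is how it closes. Let $C$ be the alternating cycle of $M_{1}\triangle M_{2}$ containing both $xz\in M_{1}$ and $yw\in M_{2}$; note $x,y,z,w$ are four distinct vertices, since $yw\notin E(G^{*})$ while $yx,yz\in E(G^{*})$. Traverse $C$ starting at $y$ along its $M_{1}$-edge and write $C=u_{0}u_{1}\cdots u_{k}u_{0}$ with $u_{0}=y$ and $u_{k}=w$; then the $M_{1}$-edges of $C$ are the $u_{2i}u_{2i+1}$, the $M_{2}$-edges are the $u_{2i+1}u_{2i+2}$ together with $u_{k}u_{0}=wy$, and $k$ is odd. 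In particular $xz=u_{2j}u_{2j+1}$ for some $j\geq 1$ with $2j+1<k$. Now match $y$ with $u_{2j+1}$ (this edge lies in $G^{*}$ because $y$ is adjacent to both ends of $xz$); match $u_{1},\dots,u_{2j}$, an even number of vertices, by the $M_{2}$-edges $u_{1}u_{2},\dots,u_{2j-1}u_{2j}$; match $u_{2j+2},\dots,u_{k}$, again an even number, by the $M_{1}$-edges $u_{2j+2}u_{2j+3},\dots,u_{k-1}u_{k}$; off $C$ keep $M_{2}$. Every edge used lies in $E(G^{*})$ and neither $xz$ nor $yw$ occurs, so $G^{*}$ has a perfect matching, the desired contradiction. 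The point that cannot be skipped is the parity choice: of the two edges $yx$ and $yz$ available, only the one joining $y$ to the odd-indexed endpoint $u_{2j+1}$ splits the rest of $C$ into two even paths; matching $y$ to $u_{2j}$ instead leaves two odd paths and no completion. Verifying that choice is precisely the content of the obstacle you deferred; with it inserted, your proof is complete (Diestel runs this step with a maximal alternating path instead of the symmetric difference, which avoids the case split altogether).
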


\begin{thm} [Theorem 2.2.3, \cite{Diestel06}]\label{fc-component}
Every graph $G$ contains a set $S\subseteq V(G)$ with the following
two properties:

\smallskip

 {\rm (i)} $S$ is matchable to $G-S$;
 
 \smallskip

 {\rm (ii)} Every component of $G-S$ is 1-fc.
 
 \smallskip

\noindent Given any such set $S$, $G$ has a perfect matching if and
only if
  $|S| =|\mathcal{C}_{G-S}|$.
\end{thm}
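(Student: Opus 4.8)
The plan is to split the statement into its two halves and handle them with different tools. The existence of a set $S$ satisfying (i) and (ii) I would prove by a single extremal choice of $S$, and the matching criterion ``$G$ has a perfect matching iff $|S|=|\mathcal{C}_{G-S}|$'' I would then read off directly from Tutte's theorem (Theorem \ref{tutte}) together with the definition of matchability. No induction is needed: the whole structural part hinges on the behaviour of the deficiency function $\mathrm{def}(T):=c_{o}(G-T)-|T|$, defined for $T\subseteq V(G)$.

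For the existence of $S$, I would first note that, $G$ being finite, $\mathrm{def}$ attains a maximum value $d$, with $d\geq 0$ since $\mathrm{def}(\emptyset)=c_{o}(G)\geq 0$. I would then fix $S$ to be a set attaining $\mathrm{def}(S)=d$ and having maximum cardinality among all such maximizers. The argument then runs in three steps, each exploiting one of these two optimality conditions. \textbf{No even components:} if $G-S$ had an even component $C$, deleting any one vertex of $C$ and adjoining it to $S$ would split $C$ into pieces, at least one of odd order, so the deficiency would not drop below $d$ while $|S|$ would increase, contradicting maximality of $|S|$; hence every component of $G-S$ is odd and $c_{o}(G-S)=|\mathcal{C}_{G-S}|$. \textbf{Every component is $1$-fc:} if some odd component $C$ were not factor-critical, there would be $c\in C$ with $C-c$ (of even order) having no perfect matching, so Tutte's theorem applied to $C-c$ would yield $T'$ with $c_{o}((C-c)-T')\geq |T'|+2$ by parity; adjoining $\{c\}\cup T'$ to $S$ preserves deficiency $\geq d$ while strictly enlarging $S$, again a contradiction. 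This establishes (ii).

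For (i), i.e. that $S$ is matchable to $G-S$, I would apply Hall's theorem to the bipartite graph $H_{s}$. It suffices to rule out a set $A\subseteq S$ whose neighbourhood among the components of $G-S$ has fewer than $|A|$ members. If such $A$ existed, the components not adjacent to $A$ would survive untouched in $G-(S\setminus A)$, so $\mathrm{def}(S\setminus A)\geq d+(|A|-|N(A)|)>d$, contradicting the maximality of $d$; hence Hall's condition holds and $S$ is matchable to $G-S$. For the final equivalence, given any $S$ with (i) and (ii), every component of $G-S$ is odd (a $1$-fc graph has odd order), so $c_{o}(G-S)=|\mathcal{C}_{G-S}|$. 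If $G$ has a perfect matching, Tutte's theorem gives $|\mathcal{C}_{G-S}|=c_{o}(G-S)\leq|S|$, while matchability forces $|\mathcal{C}_{G-S}|\geq|S|$, whence equality. Conversely, if $|S|=|\mathcal{C}_{G-S}|$, the matching of $S$ in $H_{s}$ pairs each $s\in S$ with a distinct component through a concrete edge $sv$; deleting the endpoint $v$ from its (factor-critical) component leaves a perfect matching there, and assembling these with the pairing edges yields a perfect matching of all of $G$.

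I expect the main obstacle to be organizing the single extremal choice of $S$ so that all three verifications flow from the same two optimality conditions, rather than introducing a fresh maximization each time. The delicate point is the bookkeeping of how $c_{o}$ changes when a component is broken by removing vertices, and the parity step that upgrades a Tutte violation $c_{o}\geq|T'|+1$ to $c_{o}\geq|T'|+2$; once these are pinned down, each contradiction is immediate.
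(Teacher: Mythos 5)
Your proof is correct, but there is nothing in the paper to compare it against: the statement is quoted verbatim, with proof deferred to Theorem 2.2.3 of Diestel \cite{Diestel06}, so the natural benchmark is Diestel's own argument. Your proof is essentially that canonical extremal proof --- fix $S$ maximizing the deficiency $c_{o}(G-T)-|T|$ and, among maximizers, of maximum cardinality; kill even components and non-factor-critical components using maximum cardinality, and Hall violations using maximality of the deficiency --- and your bookkeeping at the delicate points is right: in the even-component step $c_{o}$ rises by at least one while $|S|$ rises by exactly one; the parity upgrade works because $|C-c|$ is even, so $c_{o}((C-c)-T')\equiv |T'| \pmod 2$ and a Tutte violation is automatically $\geq |T'|+2$; and in the Hall step the components with no neighbour in $A$ survive intact in $G-(S\setminus A)$, giving deficiency at least $d+|A|-|N(A)|>d$. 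The one genuine divergence from Diestel is the factor-criticality step: Diestel argues by induction on $|G|$, applying the theorem itself (together with its already-established final equivalence) to $C-c$ to extract the set $T'$, so his proof never invokes Tutte's theorem and in fact yields Tutte's theorem as a by-product; you instead obtain $T'$ directly from Tutte's theorem (Theorem \ref{tutte}) plus parity, which makes your argument non-inductive but logically dependent on Tutte. In the context of this paper that dependence is harmless, since Tutte's theorem is stated independently as Theorem \ref{tutte} and its standard proofs do not pass through this structure theorem, so there is no circularity; what you give up is only the (here unneeded) feature that the structural argument reproves Tutte's theorem for free.
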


\begin{Lemma}[\cite{O.Favaron}] \label{O.Favaron}
For $k\geq 1$, every $k$-fc graph of order $n>k$ is $k$-connected and
$(k+1)$-edge-connected.
\end{Lemma}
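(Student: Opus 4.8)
The plan is to reduce both connectivity statements to Tutte's theorem (Theorem~\ref{tutte}) via the parity of component sizes. First I would record two preliminaries. Since deleting any $k$ vertices from a $k$-fc graph leaves a perfect matching, $n-k$ is even, so $n\equiv k\pmod 2$ and in fact $n\ge k+2$. Reformulating $k$-factor-criticality through Tutte's theorem then gives the working criterion that drives everything: $G$ is $k$-fc if and only if $n\equiv k\pmod2$ and $c_o(G-S)\le |S|-k$ for every $S\subseteq V(G)$ with $|S|\ge k$ (apply Theorem~\ref{tutte} to $G-T$ for a $k$-set $T$ and set $S=T\cup W$). Second, I would show $\delta(G)\ge k+1$: if some vertex $v$ had degree at most $k$, I could pick a $k$-set $U$ with $N(v)\subseteq U$ and $v\notin U$ (possible since $n-1>k$), and then $v$ is isolated in $G-U$, so $G-U$ has no perfect matching, contradicting $k$-fc.

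For $k$-connectivity, suppose to the contrary that $S_0$ is a vertex cut with $s:=|S_0|\le k-1$, and let $C_1,\dots,C_c$ ($c\ge2$) be the components of $G-S_0$, with $N:=n-s=\sum_i|C_i|$. Note $r:=k-s\ge1$ and $N-r=n-k\ge2$. I would extend $S_0$ to a $k$-set $S$ by deleting $r$ further vertices, $r_i$ of them inside $C_i$ with $\sum_i r_i=r$; whatever the choice, pieces of distinct $C_i$ stay in distinct components of $G-S$. The Tutte criterion forces $c_o(G-S)\le|S|-k=0$, i.e. every surviving piece is even, so $|C_i|-r_i$ is even for all $i$. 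The crux is a short counting lemma: because $c\ge2$ and $1\le r\le N-2$, there is always a feasible distribution $(r_i)$ with $\sum_i r_i=r$ and $|C_i|-r_i$ odd for some $i$ (either no parity-respecting distribution summing to $r$ exists, in which case every distribution works, or one takes a parity-respecting one and shifts a single unit between two coordinates). Such a choice leaves an odd component, contradicting $c_o(G-S)=0$; hence no small cut exists and $G$ is $k$-connected.

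For $(k+1)$-edge-connectivity, suppose $F=\partial(X)$ is an edge cut with $t:=|F|\le k$. Using $\delta(G)\ge k+1$ and $|\partial(X)|\ge|X|(k+2-|X|)$, which is at least $k+1$ whenever $1\le|X|\le k+1$, I obtain that both $X$ and $\overline X$ have at least $k+2$ vertices. Let $Y\subseteq X$ be the endpoints in $X$ of the edges of $F$, so $|Y|\le t\le k$ and deleting $Y$ leaves no edge between $X\setminus Y$ and $\overline X$. If $|Y|<k$ (or, symmetrically, the $\overline X$-endpoint set has size $<k$), I would enlarge $Y$ to a $k$-set $S$ by deleting $k-|Y|$ further vertices split between the two sides so that the surviving $X$-side has odd order (the freedom $k-|Y|\ge1$ and $|X\setminus Y|\ge2$ make both parities attainable); since the two sides lie in different components of $G-S$, this produces an odd component, again violating the Tutte criterion. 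The only remaining case is $t=k$ with $F$ a perfect matching of size $k$ between $Y$ and $Z\subseteq\overline X$, both of size $k$; here deleting $Y$ and, separately, $Z$ forces $|X|,|\overline X|$ and $k$ all even, and then deleting $Y\setminus\{y_0\}$ together with the $F$-partner of $y_0$ in $Z$ severs the cut and leaves the $X$-side with the odd order $|X|-k+1$, the final contradiction.

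The main obstacle is the bookkeeping hidden in the two parity maneuvers rather than any deep idea: I must verify that the deleted vertices can always be placed so that some surviving piece has odd order, which for connectivity is exactly the counting lemma above and for edge-connectivity requires isolating the genuinely degenerate case where the minimum cut is a perfect matching across the two sides. Throughout I also need to keep checking that the two sides stay both separated and nonempty after the extra deletions, which is guaranteed by the size bounds $|X|,|\overline X|\ge k+2$ and $N-r\ge2$. Once these elementary cases are organized, the Tutte reformulation closes every branch uniformly.
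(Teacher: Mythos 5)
The paper offers no proof of this lemma to compare against: it is imported verbatim from Favaron \cite{O.Favaron} and used as a black box. Judged on its own, your argument is correct and self-contained, resting only on Theorem~\ref{tutte}, which the paper also quotes. I verified the delicate points. Your Tutte reformulation ($G$ is $k$-fc iff $n\equiv k\pmod 2$ and $c_{o}(G-S)\leq |S|-k$ for all $S$ with $|S|\geq k$) is right, as is the minimum-degree bound $\delta(G)\geq k+1$. In the vertex-connectivity half, deleting the $r_{i}$ extra vertices inside $C_{i}$ may split $C_{i}$ into several components, but since the remnant of each $C_{i}$ is a union of components of $G-S$, parity is all that matters: $c_{o}(G-S)=0$ forces each $|C_{i}|-r_{i}$ even, and an odd remnant total yields an odd component, exactly as you use it. One step worth spelling out is your shifting lemma: when the starting distribution has all remnants even, the unit shift must sometimes go \emph{into} rather than out of the coordinate carrying $r_{i}\geq 1$ (namely when the entire surplus $N-r\geq 2$ sits at that one coordinate and $r_{j}=|C_{j}|$ for all $j\neq i$); your parenthetical covers this only implicitly. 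In the edge-connectivity half, the bound $|\partial(X)|\geq |X|(k+2-|X|)\geq k+1$ for $1\leq |X|\leq k+1$ correctly forces $|X|,|\overline{X}|\geq k+2$, the parity maneuver for $|Y|<k$ is sound (with $a\in\{0,1\}$ available because $|X\setminus Y|\geq 2$ and $k-|Y|\geq 1$), and your isolation of the degenerate case --- the cut being a perfect matching between $k$-sets $Y$ and $Z$, whence $|X|$, $|\overline{X}|$ and $k$ are all even and deleting $(Y\setminus\{y_{0}\})\cup\{z_{0}\}$ leaves the $X$-side with odd order $|X|-k+1$ --- closes the last branch. This Tutte-based parity route is the natural one and is in the spirit of Favaron's original argument; since the present paper only cites the result, your proof would stand as a legitimate self-contained substitute.
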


Now, it is ready to prove our main result.
\vspace{0.3cm}

\noindent{\em{Proof of Theorem \ref{maintheorem}.}} Necessity. If $g=3$, then by Lemma \ref{n+t}, it is not 2-extendable. By Theorems \ref{original} and \ref{Schrag}, $Z_{4n}(1,4n-1,2n)$, 
$Z_{4n+2}(2,4n,2n+1)$ with $n\geq 2$ and  the Petersen graph are not 2-extendable. We are done.

\vskip 0.1cm

\smallskip

Sufficiency.
Suppose by the contrary that $G$ is not 2-extendable. Then there exist two
independent edges $e_{1}$ and $e_{2}$ such that
$G'=G-V(e_{1})-V(e_{2})$ has no perfect matchings. By Theorem
\ref{fc-component}, there exists an $S'\subseteq V(G')$ satisfying
that $S'$ is matchable to $G'-S'$, $|\mathcal{C}_{G'-S'}|\geq |S'|+1$ and all components of $G'-S'$ are 1-fc. Since $|S'|$ and $|\mathcal{C}_{G'-S'}|$ have the same property, we have $|\mathcal{C}_{G'-S'}|\geq |S'|+2$. By Lemma \ref{1-extendable}, $G$ is 1-extendable. Consequently, $G''=G-V(e_{1})$ has a perfect matching. Hence $|\mathcal{C}_{G''-S''}|\leq |S''|= |S'|+2$ by Theorem \ref{tutte}, where $S''=S'\cup V(e_{2})$. We have 
$$ |S'|+2\leq |\mathcal{C}_{G'-S'}|=|\mathcal{C}_{G''-S''}|\leq  |S'|+2.$$
Therefore, $ |S'|+2= |\mathcal{C}_{G'-S'}|$. Denote $S=S'\cup V(e_{1})\cup V(e_{2})$. Then $ |\mathcal{C}_{G-S}|= |S|-2$ and all components of $G-S$ are factor-critical. 

\vskip 0.1cm

\smallskip

 \noindent {\bf{Claim 1.}}  There is at least one
factor-critical component which is not a singleton, denoted one by $H_{1}$. 
\vskip 0.1cm

\smallskip

Suppose by the contrary that all the factor-critical components are
singletons. Then since $G$ is cubic, by substituting $k=3$ in Theorem \ref{structure}, we obtain that $G$ is isomorphic to $Z_{4n}(1,4n-1,2n)$ or
$Z_{4n+2}(2,4n,2n+1)$ or the Petersen graph, a contradiction.

\vskip 0.1cm

\smallskip

 \noindent {\bf{Claim 2.}}  All the other factor-components of $G-S$ except $H_{1}$ are singletons,  $\partial(H_{1})$ is a cyclic 5-edge-cut and $g\leq 5$.

\vskip 0.1cm

\smallskip

Since $G$ does not contain triangles and $|\overline{V(H_{1})}|\geq 5$,  $d(H_{1})\geq 4$ by Theorem \ref{Xu2000}. Moreover, because $G$ is cubic and $H_{1}$ is an odd component, $d(H_{1})$ and $|H_{1}|$ have the same parity. Hence $d(H_{1})\geq 5$.   $S$ sends at most $3|S|-4$  edges to the factor-critical components; On the other hand, by the 3-edge-connectivity of $G$, the factor-critical components  need at least $5+3(|S|-3)=3|S|-4$ edges from $S$. So there are exactly $3|S|-4$ edges between $S$ and $\overline{S}$. Therefore, $d(H_{1})=5$, all the other components send out exactly three edges and $G[S]$ contains exactly two edges $e_{1}$ and $e_{2}$. Since $G$ does not contain triangles, by Theorem \ref{superlambda}, all the components except $H_{1}$ are singletons. 

Since $H_{1}$ is a factor-critical graph with at least three vertices, $H_{1}$ contains a cycle by Lemma \ref{O.Favaron}. On the other hand, since  $|\overline{H_{1}}|\geq 5$, 

\begin{align*}
||G[\overline{V(H_{1}})||=&\frac{3|\overline{V(H_{1})}|-d(\overline{V(H_{1}}))}{2}=\frac{3|\overline{V(H_{1})}|-d(H_{1})}{2}\\=&
\frac{3|\overline{V(H_{1})}|-5}{2}\geq |\overline{V(H_{1})}|
\end{align*}
holds. Consequently, $G[\overline{V(H_{1}})]$ contains a cycle.  Therefore, $\partial(H_{1})$ is a cyclic edge-cut and further a cyclic 5-edge-cut. Furthermore, by Theorem \ref{Nedela95}, $g=c\lambda(G) \leq d(H_{1})=5$.

\vskip 0.1cm

\smallskip

 If $g=5$, then by Theorem \ref{super cyclically 5-edge-connected},  $G$ is super cyclically 5-edge-connected, that is to say, any cyclic 5-edge-cut isolates a
pentagon. Since $\partial(H_{1})$ is a cyclic 5-edge-cut, either $H_{1}$ or $G[\overline{V(H_{1})}]$ is a pentagon. If $G[\overline{V(H_{1})}]$ is a pentagon, then $G[S]$ is isomorphic to a path of length three, a contradiction. Therefore, $H_{1}$ is a pentagon.

\vskip 0.1cm

\smallskip

\noindent {\bf{Claim 3.}} All edges in $G$ are not remove edges.

\vskip 0.1cm

\smallskip

If this claim holds, then $G$ is uniformly cyclically 5-edge-connected.
Further, if $G$ contains a rosette as its subgraph, then by Lemma \ref{dodecahedron}, $G$ is isomorphic to the dodecahedron. By Theorem \ref{Zhangheping}, it is 2-extendable, contradicting the hypothesis. If $G$ does not contain any rosette as its subgraph, then by Theorem
\ref{Aldred91}, $G$ is either a double ladder or one of $G_{1}$, $G_{2}$.  If $G$ is isomorphic to one of $G_{1}$ and $G_{2}$, then by checking that $G[N_{2}(u)]$ is not isomorphic to $G[N_{2}(v)]$ (see Figure \ref{rossete}), we can see that both $G_{1}$ and $G_{2}$ are not vertex-transitive, a contradiction.
We are left to the case that  $G$ is a double ladder. By Theorems \ref{Aldred91} and \ref{doubleladderextendable}, $G$ is 2-extendable except the Petersen graph, a contradiction too, we are done.

\vskip 0.1cm

\smallskip

\noindent {\em{Proof of Claim 3.}}  Suppose by the contrary that there is an edge $e$  which is removable; that is, $G-e$ is still cyclically 5-edge-connected. Then $e$ cannot be any edge sending out from a pentagon. This is because any set of edges sending out from a pentagon forms a cyclic 5-edge-cut, which can be deduced by a similar way as for $H_{1}$. Consequently, $e$ belongs to some pentagon denoted by $P=w_{1}w_{2}w_{3}w_{4}w_{5}$ and we may suppose that $e=w_{1}w_{2}$. Note that all the edges sending out $P$ are not removable. By the vertex-transitivity of $G$, $w_{3}$ is incident to one edge that is removable and this edge is either $w_{2}w_{3}$ or $w_{3}w_{4}$. We will show that whether what this removable is, all edges in $P$ are removable. If this removable edge incident with $w_{3}$ mentioned above is $w_{2}w_{3}$, then $w_{2}$ is incident two removable edges and further by the vertex-transitivity of $G$, so is $w_{1}, w_{3}, w_{4}$ and $w_{5}$. Therefore, all edges in $P$ are removable. If this removable edge incident with $w_{3}$ is $w_{3}w_{4}$, then also by the vertex-transitivity, $w_{5}$ is incident with one removable edge which is either $w_{4}w_{5}$ or $w_{5}w_{1}$. In either case, we obtain a vertex $w_{4}$ or $w_{1}$ which is incident with two removable edges. By a similar argument as above, we obtain our desire that all edges in $P$ are removable.

Now we know that all edges belonging to some pentagon are removable and the edges not belonging to any pentagon are not removable. It follows that all pentagons are independent. Since $H_{1}$ is a pentagon and all the other pentagons should contain an edge in $G[S]$, there are at most three pentagons. Moreover, since $G$ is of even order, it contains exactly two disjoint pentagons. Similarly as before, $G[\overline{V(H_{1})}]$ can not be a pentagon, a contradiction. 

\vskip 0.1cm

\smallskip

{If $g=4$, then when $G$ contains adjacent qurdangles, we  obtain that $G$ is isomorphic to  $Z_{4n}(1,4n-1,2n)$ or
$Z_{4n+2}(2,4n,2n+1)$  by Lemma \ref{k3g4}, a contradiction. If $G$ is isomorphic to  $T_{m}$ for some integer $m$, then by Lemma \ref{Tmextendable}, it is 2-extendable, a contradiction too. We are left to consider that all quadrangles are independent and $G$ is not isomorphic to $T_{m}$. In this case, it is super cyclically 4-edge-connected by Lemma \ref{g4superaaa}. Since $|H_{1}|$ is odd, there is a quadrangle, denoted by $Q$, containing exactly one or three vertices in $H_{1}$. If $Q$ contains exactly one vertex in $H_{1}$, then this vertex is of degree one in $H_{1}$. But $H_{1}$ is factor-critical, every vertex is of degree at least two  by Lemma \ref{O.Favaron}, a contradiction. If $Q$ contains exactly three vertices of $H_{1}$, then we denote the fourth vertices in $Q$ but not in $H_{1}$ by $v$. The set of edges sending out from $V(H_{1})\cup \{v\}$ forms a cyclic 4-edge-cut of $G$, which follows that $V(H_{1})\cup \{v\}$ or $\overline{V(H_{1})\cup \{v\}}$ induces a quadrangle by the super cyclically 4-edge-connectivity of $G$. If $V(H_{1})\cup \{v\}$ induces a quadrangle, then $H_{1}$ should be a path of length 2, contradicting that $H_{1}$ is factor-critical; If  $\overline{V(H_{1})\cup \{v\}}$ induces a quadrangle, then $G[S]$ contains adjacent edges, contradicting that $G[S]$ contains exactly two edges $e_{1}$ and $e_{2}$. This finally completes the proof. \hspace*{\fill}$\square$




\end{document}